\documentclass[english]{article}
\usepackage[T1]{fontenc}
\usepackage[utf8]{inputenc}
\usepackage{float}
\usepackage{units}
\usepackage{enumitem}
\usepackage{amsmath}
\usepackage{amsthm}
\usepackage{amssymb}
\usepackage[authoryear]{natbib}

\makeatletter

\newcommand{\lyxmathsym}[1]{\ifmmode\begingroup\def\b@ld{bold}
  \text{\ifx\math@version\b@ld\bfseries\fi#1}\endgroup\else#1\fi}

\floatstyle{ruled}
\newfloat{algorithm}{tbp}{loa}
\providecommand{\algorithmname}{Algorithm}
\floatname{algorithm}{\protect\algorithmname}

\theoremstyle{plain}
\newtheorem{thm}{\protect\theoremname}[section]
\theoremstyle{plain}
\newtheorem{assumption}[thm]{\protect\assumptionname}
\theoremstyle{remark}
\newtheorem{rem}[thm]{\protect\remarkname}
\theoremstyle{plain}
\newtheorem{lem}[thm]{\protect\lemmaname}
\newlist{casenv}{enumerate}{4}
\setlist[casenv]{leftmargin=*,align=left,widest={iiii}}
\setlist[casenv,1]{label={{\itshape\ \casename} \arabic*.},ref=\arabic*}
\setlist[casenv,2]{label={{\itshape\ \casename} \roman*.},ref=\roman*}
\setlist[casenv,3]{label={{\itshape\ \casename\ \alph*.}},ref=\alph*}
\setlist[casenv,4]{label={{\itshape\ \casename} \arabic*.},ref=\arabic*}

\usepackage{dsfont}
\usepackage{microtype}
\usepackage{graphicx}
\usepackage{subcaption}
\usepackage{booktabs} 

\PassOptionsToPackage{hyphens}{url}
\usepackage{hyperref}

\usepackage[accepted]{icml2026}

\usepackage[capitalize,noabbrev]{cleveref}

\usepackage[textsize=tiny]{todonotes}

\icmltitlerunning{Can Adaptive Gradient Methods Converge under Heavy-Tailed Noise? A Case Study of AdaGrad}

\makeatother

\usepackage{babel}
\providecommand{\assumptionname}{Assumption}
\providecommand{\casename}{Case}
\providecommand{\lemmaname}{Lemma}
\providecommand{\remarkname}{Remark}
\providecommand{\theoremname}{Theorem}

\begin{document}
\global\long\def\E{\mathbb{E}}%
\global\long\def\F{\mathcal{F}}%
\global\long\def\N{\mathbb{N}}%
\global\long\def\P{\mathbb{P}}%
\global\long\def\R{\mathbb{R}}%
\global\long\def\O{\mathcal{O}}%
\global\long\def\d{\mathrm{d}}%
\global\long\def\bc{\boldsymbol{c}}%
\global\long\def\bg{\boldsymbol{g}}%
\global\long\def\bL{\boldsymbol{L}}%
\global\long\def\bu{\boldsymbol{u}}%
\global\long\def\bv{\boldsymbol{v}}%
\global\long\def\bw{\boldsymbol{w}}%
\global\long\def\bx{\boldsymbol{x}}%
\global\long\def\by{\boldsymbol{y}}%
\global\long\def\bz{\boldsymbol{z}}%
\global\long\def\bxi{\boldsymbol{\xi}}%
\global\long\def\bsig{\boldsymbol{\sigma}}%
\global\long\def\bzero{\boldsymbol{0}}%
\global\long\def\sx{x}%
\global\long\def\sy{y}%
\global\long\def\sz{z}%
\global\long\def\poly{\mathrm{poly}}%
\global\long\def\p{p}%
\global\long\def\cp{\bar{\p}}%
\global\long\def\defeq{\triangleq}%
\global\long\def\mydots{\dots}%
\global\long\def\res{K}%
\global\long\def\SGD{\mathtt{SGD}}%
\global\long\def\ClipSGD{\mathtt{Clipped}\text{ }\mathtt{SGD}}%
\global\long\def\NSGD{\mathtt{NSGD}}%
\global\long\def\AdaGrad{\mathtt{AdaGrad}}%
\global\long\def\AdaGradNorm{\mathtt{AdaGrad}\text{-}\mathtt{Norm}}%
\global\long\def\RMSProp{\mathtt{RMSProp}}%
\global\long\def\Adam{\mathtt{Adam}}%
\global\long\def\AdamW{\mathtt{AdamW}}%

\twocolumn[
\icmltitle{Can Adaptive Gradient Methods Converge under Heavy-Tailed Noise?\\A Case Study of AdaGrad}



\icmlsetsymbol{equal}{*}

\begin{icmlauthorlist}
	\icmlauthor{Zijian Liu}{NYU}
\end{icmlauthorlist}

\icmlaffiliation{NYU}{Stern School of Business, New York University}

\icmlcorrespondingauthor{Zijian Liu}{zl3067@stern.nyu.edu}

\icmlkeywords{Machine Learning, ICML}

\vskip 0.3in 
]



\printAffiliationsAndNotice{}  
\begin{abstract}
Many tasks in modern machine learning are observed to involve heavy-tailed
gradient noise during the optimization process. To manage this realistic
and challenging setting, new mechanisms, such as gradient clipping
and gradient normalization, have been introduced to ensure the convergence
of first-order algorithms. However, adaptive gradient methods, a famous
class of modern optimizers that includes popular $\mathtt{Adam}$
and $\mathtt{AdamW}$, often perform well even without any extra operations
mentioned above. It is therefore natural to ask whether adaptive gradient
methods can converge under heavy-tailed noise without any algorithmic
changes. In this work, we take the first step toward answering this
question by investigating a special case, $\mathtt{AdaGrad}$, the
origin of adaptive gradient methods. We provide the first provable
convergence rate for $\mathtt{AdaGrad}$ in non-convex optimization
when the tail index $p$ satisfies $4/3<p\leq2$. Notably, this result
is achieved without requiring any prior knowledge of $p$ and is hence
adaptive to the tail index. In addition, we develop an algorithm-dependent
lower bound, suggesting that the existing minimax rate for heavy-tailed
optimization is not attainable by $\mathtt{AdaGrad}$. Lastly, we
consider $\mathtt{AdaGrad}\text{-}\mathtt{Norm}$, a popular variant
of $\mathtt{AdaGrad}$ in theoretical studies, and show an improved
rate that holds for any $1<p\leq2$ under an extra mild assumption.
\end{abstract}

\section{Introduction}

The heavy-tailed phenomenon has been widely observed in the optimization
process for modern machine learning tasks across various domains \citep{pmlr-v97-simsekli19a,pmlr-v139-garg21b,pmlr-v238-battash24a},
and is particularly prevalent when training attention-based models
\citep{NIPS2017_3f5ee243,NEURIPS2020_b05b57f6,ahn2024linear}. More
concretely, it refers to the gradient noise (i.e., the difference
between the stochastic gradient and the true gradient) having only
a finite $\p$-th moment for some $\p\in\left(1,2\right]$, rather
than satisfying the classical finite-variance condition (i.e., $\p=2$)
commonly adopted in the stochastic optimization literature \citep{doi:10.1137/16M1080173,lan2020first}. 

For first-order methods, two approaches are known to guarantee provable
convergence in non-convex optimization under heavy-tailed noise. One
way is based on gradient clipping (i.e., Clipped Stochastic Gradient
Descent ($\ClipSGD$)), which artificially limits the norm of the
stochastic gradient within a user-specific threshold \citep{NEURIPS2020_b05b57f6,pmlr-v195-liu23c,pmlr-v202-sadiev23a,NEURIPS2023_4c454d34}.
The other kind relies on gradient normalization, as recently discovered
by \citet{pmlr-v258-hubler25a,liu2025nonconvex,JMLR:v26:24-1991},
who show that the normalization mechanism in Normalized Stochastic
Gradient Descent (with Momentum) ($\NSGD\text{(}\text{\ensuremath{\mathtt{M}}}\text{)}$)
\citep{nesterov1984minimization,pmlr-v119-cutkosky20b} can also successfully
tackle heavy-tailed noise. Interestingly, \citet{pmlr-v258-hubler25a,liu2025nonconvex}
also provided a convergence rate achieved without any prior knowledge
of problem-dependent parameters, which is the first in the literature.
In particular, the feature of not requiring any information on the
tail index $\p$ highlights a key advantage of using $\NSGD\text{(}\text{\ensuremath{\mathtt{M}}}\text{)}$.

Despite the progress, an important gap still remains between theory
and practice. Specifically, the above results cannot cover a well-known
class of algorithms widely used in practice, namely, adaptive gradient
methods, whose empirical effectiveness has been repeatedly demonstrated
in the training of neural networks, including large language models.
This class includes $\AdaGrad$, introduced in two pioneering works
\citep{McMahanS10,JMLR:v12:duchi11a}, followed by $\RMSProp$ \citep{tieleman2012lecture},
then more practical algorithms nowadays, such as $\Adam$ \citep{kingma2014adam}
and $\AdamW$ \citep{loshchilov2018decoupled}, along with many further
variants. In other words, the reason for the strong performance of
adaptive gradient methods under heavy-tailed noise remains largely
unclear and warrants further exploration.

To the best of our knowledge, only one recent work by \citet{pmlr-v267-chezhegov25a}
studies $\AdaGrad$- and $\Adam$-based methods under heavy-tailed
noise. However, their work has some limitations that we will discuss
below. 

First and foremost, the main results presented in \citet{pmlr-v267-chezhegov25a}
are established for \textit{delayed} adaptive gradient algorithms
introduced by \citet{pmlr-v89-li19c}, in which the stepsize constructed
at the $t$-th iteration depends only on stochastic gradients up to
time $t-1$, rather than time $t$. It is known that the delayed variant
requires a different style of theoretical analysis, since it makes
the stepsize and the stochastic gradient conditionally independent.
Moreover, this modification is rarely implemented in practice. Consequently,
the applicability of their theoretical guarantees to practical algorithms
is limited, leaving the gap open.

Next, Theorem 3.3 in \citet{pmlr-v267-chezhegov25a} is the only result
in their work not for the delayed setting. However, it still cannot
directly apply to $\AdaGrad$ and $\Adam$, as the algorithms considered
there do not employ the coordinate-wise update and additionally require
gradient clipping. Moreover, from a theoretical perspective, their
Theorem 3.3 has two shortcomings. One is the extra assumption of
boundedness for objective functions, which is stronger than the standard
lower boundedness condition in the non-convex optimization literature,
thereby reducing the generality. The other is requiring the value
of problem-dependent parameters (e.g., the tail index $\p$) as inputs
to ensure convergence, contradicting the original purpose of adaptive
gradient methods.

Therefore, \citet{pmlr-v267-chezhegov25a} cannot fully explain the
empirical success of adaptive gradient methods under heavy-tailed
noise, leaving a large room for further improvement.

Motivated by the above discussion, we are naturally led to the following
question:
\begin{center}
\textit{Can adaptive gradient methods converge under heavy-tailed
noise in non-convex optimization, without any algorithmic modifications,
nonstandard assumptions, or prior knowledge of problem-dependent parameters?}
\par\end{center}

\subsection{Our Contributions}

In this work, we take the first step toward answering the above question
through a case study of $\AdaGrad$, the origin of adaptive gradient
methods, and make the following contributions:
\begin{itemize}
\item In Theorem \ref{thm:main-AdaGrad-ub}, we show that $\AdaGrad$ provably
converges at a rate of $\tilde{\O}(1/T^{\frac{3\p-4}{4\p}})$ after
$T$ iterations under heavy-tailed noise in non-convex optimization,
which is meaningful when $\p\in\left(\nicefrac{4}{3},2\right]$. To
the best of our knowledge, this result provides the first theoretical
justification for the convergence of $\AdaGrad$ in the heavy-tailed
setting, without any algorithmic modifications, nonstandard assumptions,
or prior knowledge of problem-dependent parameters, thereby (partially)
confirming the question asked earlier.
\item In Theorem \ref{thm:main-AdaGrad-lb-1d}, we establish the first algorithm-dependent
lower bound for $\AdaGrad$ in heavy-tailed non-convex optimization,
explicitly capturing the dependence on the input learning rate. Our
result suggests that the existing minimax rate for heavy-tailed non-convex
optimization \citep{NEURIPS2020_b05b57f6,liu2025nonconvex,pmlr-v313-liu26a}
is generally not attainable by $\AdaGrad$. Even in the special case
of $\p=2$, our bound also improves upon the existing lower bound
for $\AdaGrad$ proved by \citet{pmlr-v291-jiang25c}.
\end{itemize}
Moreover, we study $\AdaGradNorm$ \citep{pmlr-v97-ward19a}, a popular
variant of $\AdaGrad$ in theoretical research.
\begin{itemize}
\item In Theorem \ref{thm:main-AdaGradNorm-ub-bounded}, we prove that $\AdaGradNorm$
converges at a rate of $\O(1/T^{\frac{\p-1}{2\p}})$ under the additional
assumption of bounded objectives, as considered in \citet{pmlr-v267-chezhegov25a}.
This rate never becomes vacuous for any $\p\in\left(1,2\right]$ and
also does not require any prior information on problem-dependent parameters.
\item In Theorem \ref{thm:AdaGradNorm-ub}, we further provide an upper
bound of $\tilde{\O}(1/T^{\frac{3\p-4}{4\p}})$ for $\AdaGradNorm$
without the extra boundedness assumption, matching the rate for $\AdaGrad$
in terms of $T$ in the same setting.
\end{itemize}
Finally, in Section \ref{sec:conclusion}, we also discuss the limitations
of our work and outline possible future directions.

\subsection{Related Work}

We first review the literature on adaptive gradient methods.

\paragraph{Adaptive gradient methods.}

The study of adaptive gradient methods traces back to two pioneering
works \citep{McMahanS10,JMLR:v12:duchi11a}, independently introducing
the first adaptive gradient method $\AdaGrad$. Later on, $\RMSProp$,
a combination of $\AdaGrad$ and the mean square estimation technique,
was proposed by \citet{tieleman2012lecture}. By further incorporating
momentum into $\RMSProp$, $\Adam$ was developed in the seminal work
of \citet{kingma2014adam}. Furthermore, \citet{loshchilov2018decoupled}
introduced decoupled weight decay into $\Adam$, resulting in a new
algorithm now known as $\AdamW$. In addition to these methods, numerous
variants exist in the literature, for example, $\mathtt{AMSGrad}$
\citep{j.2018on} and $\mathtt{Adafactor}$ \citep{pmlr-v80-shazeer18a}.

Although many adaptive gradient methods were originally designed to
guarantee sublinear regret in online convex optimization (e.g., $\AdaGrad$
and $\Adam$), they have been observed to perform well across a wide
range of modern machine learning tasks, which are however typically
non-convex. As far as we know, \citet{pmlr-v97-ward19a} established
the first provable rate for $\AdaGradNorm$ (a simple variant of $\AdaGrad$),
serving as a cornerstone of theoretical studies for adaptive gradient
methods in non-convex optimization. Subsequently, a large body of
work has developed comprehensive studies of the convergence theory
for different adaptive gradient methods, including $\AdaGrad$, $\RMSProp$,
$\Adam$, and $\AdamW$ (or their variants), in both deterministic
and stochastic non-convex optimization \citep{NEURIPS2018_90365351,de2018convergence,Zou_2019_CVPR,shi2021rmsprop,defossez2022a,kavis2022high,pmlr-v178-faw22a,NEURIPS2022_b6260ae5,pmlr-v195-faw23a,pmlr-v202-liu23aa,pmlr-v202-attia23a,NEURIPS2023_eb1a323f,NEURIPS2023_7ac19fdc,pmlr-v195-wang23a,10.1145/3637528.3671718,NEURIPS2024_14bb27f6,liu2025adagrad,pmlr-v291-jiang25c,zhang2025convergence,JMLR:v26:24-0523}.
However, under the classical finite-variance assumption (or similar
conditions), the existing theory does not reflect a substantial advantage
of adaptive gradient methods over $\SGD$, as they share the same
convergence rate $\tilde{\O}(1/T^{\frac{1}{4}})$ in terms of the
time horizon $T$ to find a stationary point.

As for lower bounds of adaptive gradient methods, the only result
we are aware of is for $\AdaGrad$\footnote{For $\AdaGrad$, \citet{crawshaw2025complexity} also proved a lower
bound result that is not directly comparable to \citet{pmlr-v291-jiang25c}
due to a different setting.} under $\p=2$ given by \citet{pmlr-v291-jiang25c}, which establishes
a complexity lower bound of $\Omega((\Delta L\epsilon^{-2}+\Delta L\sigma^{2}\epsilon^{-4})\ln(\Delta L\epsilon^{-2}))$\footnote{For convenience, we only state their result in the one-dimensional
case. \citet{pmlr-v291-jiang25c} also established a lower bound in
the high-dimensional case.} to find an $\epsilon$-stationary point, where $\Delta$ denotes
the initial function value gap, $L>0$ is the smoothness parameter,
and $\sigma\geq0$ characterizes the noise level. This bound is larger
than the minimax rate for stochastic non-convex optimization under
the finite-variance condition \citep{arjevani2023lower}. However,
it is not algorithm-dependent, since it fails to capture the dependence
on the input learning rate.\\

Next, we provide a basic background on smooth non-convex optimization
under heavy-tailed noise.

\paragraph{Upper bound under heavy-tailed noise.}

For clipping-based algorithms (e.g., $\ClipSGD$), several works have
established the optimal convergence rate $\O(1/T^{\frac{\p-1}{3\p-2}})$
(or $\tilde{\O}(1/T^{\frac{\p-1}{3\p-2}})$) in both expectation and
high probability \citep{NEURIPS2020_b05b57f6,NEURIPS2021_26901deb,pmlr-v195-liu23c,NEURIPS2023_4c454d34}.
Such rates are always derived based on the prior value of problem-dependent
parameters, in particular, the tail index $\p$. Recent works \citep{pmlr-v258-hubler25a,liu2025nonconvex,JMLR:v26:24-1991}
further show that the normalization-based method (i.e., $\NSGD\text{(}\mathtt{M}\text{)}$)
can also achieve the optimal rate of $\O(1/T^{\frac{\p-1}{3\p-2}})$
when prior information about the problem is available. Moreover, \citet{pmlr-v258-hubler25a,liu2025nonconvex}
also prove that $\NSGD\text{(}\mathtt{M}\text{)}$ can converge at
a rate of $\O(1/T^{\frac{\p-1}{2\p}})$ without any prior information.

\paragraph{Lower bound under heavy-tailed noise.}

For non-convex optimization under heavy-tailed noise, to find an $\epsilon$-stationary
point in expectation, any (possibly randomized) algorithm must query
at least $\Omega(\Delta L\epsilon^{-2}+\Delta L\sigma^{\frac{\p}{\p-1}}\epsilon^{-\frac{3\p-2}{\p-1}})$
stochastic gradients \citep{NEURIPS2020_b05b57f6,liu2025nonconvex,pmlr-v313-liu26a},
where $\p\in\left(1,2\right]$ is the tail index, $\Delta$ denotes
the initial function value gap, $L>0$ is the smoothness parameter,
and $\sigma\geq0$ characterizes the noise level. However, the algorithm
(e.g., $\ClipSGD$) that can achieve this minimax lower bound often
requires prior knowledge of all these parameters, which is usually
not practical.

Recently, \citet{pmlr-v258-hubler25a} establishes the first algorithm-dependent
lower bound for $\NSGD$ in one-dimensional optimization that captures
the dependence on the stepsize and batch size when problem-dependent
parameters are unknown in advance. In particular, their result can
be simplified to an $\Omega((\Delta^{4}+L^{4})\epsilon^{-4}+\sigma^{\frac{2\p}{\p-1}}\epsilon^{-\frac{2\p}{\p-1}})$
lower bound for $\NSGD$ when no prior information is available.

\section{Preliminary}

\begin{algorithm*}[t]
\caption{\label{alg:AdaGrad}$\protect\AdaGrad$ \citep{McMahanS10,JMLR:v12:duchi11a}}

\textbf{Input:} initial point $\bx_{1}\in\R^{d}$, learning rate $\gamma>0$,
hyperparameter $\lambda>0$

\textbf{Initialization:} $\bv_{0}=\bzero$

\textbf{for} $t=1$ \textbf{to} $T$ \textbf{do}

$\quad$$\bv_{t}=\bv_{t-1}+\bg_{t}^{2}$

$\quad$$\bx_{t+1}=\bx_{t}-\frac{\gamma}{\lambda+\sqrt{\bv_{t}}}\bg_{t}$

\textbf{end for}
\end{algorithm*}

\paragraph{Notation.}

In this paper, scalars and vectors are denoted by regular and bold
fonts, respectively. $\N$ is the set of natural numbers (excluding
$0$). $\left[n\right]\triangleq\left\{ 1,\mydots,n\right\} ,\forall n\in\N$.
$\left\lceil \cdot\right\rceil $ is the ceiling function. For $a>1$,
we denote by $\bar{a}$ the conjugate of $a$ (i.e., $\frac{1}{a}+\frac{1}{\bar{a}}=1$).
$\R_{>0}^{d}$ (resp. $\R_{\geq0}^{d}$) is the set of vectors in
$\R^{d}$ whose coordinates are all positive (resp. non-negative).
Given $\boldsymbol{\Lambda}\in\R_{>0}^{d}$, its induced inner product
and norm are $\left\langle \bx,\by\right\rangle _{\boldsymbol{\Lambda}}\triangleq\sum_{i=1}^{d}\bx_{i}\boldsymbol{\Lambda}_{i}\by_{i}$
and $\left\Vert \bx\right\Vert _{\boldsymbol{\Lambda}}\triangleq\sqrt{\left\langle \bx,\bx\right\rangle _{\boldsymbol{\Lambda}}}$,
respectively. In addition, we use the following shorthands $(\bx\by)_{i}\defeq\bx_{i}\by_{i}$,
$\bx^{2}\defeq\bx\bx$, $(\frac{\bx}{\by})_{i}\defeq\frac{\bx_{i}}{\by_{i}}$,
and $(\sqrt{\bz})_{i}=\sqrt{\bz_{i}}$ for any $i\in\left[d\right]$
and $\bx,\by\in\R^{d},\bz\in\R_{\geq0}^{d}$. Given a differentiable
function $h$, $\nabla_{i}h$ is the partial derivative w.r.t. the
$i$-th coordinate.

\paragraph{Objective.}

This work studies the optimization problem
\[
\min_{\bx\in\R^{d}}f(\bx),
\]
where $f:\R^{d}\to\R$ is differentiable and possibly non-convex.
Since finding a global optimal solution can be computationally intractable,
we shift the focus to minimizing $\left\Vert \nabla f(\bx)\right\Vert $
as in the non-convex optimization literature.

\paragraph{Assumptions.}

We first make the following assumptions.
\begin{assumption}[Lower boundedness]
\label{assu:lb}The objective satisfies $f_{\star}\triangleq\inf_{\bx\in\R^{d}}f(\bx)>-\infty$.
\end{assumption}

\begin{assumption}[Smoothness]
\label{assu:smooth}$\exists\bL\in\R_{>0}^{d}$ such that $\left|f(\bx)-f(\by)-\left\langle \nabla f(\by),\bx-\by\right\rangle \right|\leq\frac{1}{2}\left\Vert \bx-\by\right\Vert _{\bL}^{2}$
 for any $\bx,\by\in\R^{d}$, or equivalently, $\left\Vert \nabla f(\bx)-\nabla f(\by)\right\Vert _{\nicefrac{1}{\bL}}\leq\left\Vert \bx-\by\right\Vert _{\bL}$
for any $\bx,\by\in\R^{d}$.
\end{assumption}

Assumption \ref{assu:lb} is standard in the literature. Assumption
\ref{assu:smooth} is the coordinate-wise counterpart of the classical
smoothness condition. This kind of fine-grained smoothness has been
studied before, for example, in \citet{pmlr-v80-bernstein18a,bernstein2018signsgd,liu2023on,pmlr-v291-jiang25c,liu2025adagrad}.

Since we consider stochastic optimization, given a point $\bx_{t}\in\R^{d}$
at the $t$-th iteration, $\bg_{t}$ hereinafter denotes the stochastic
gradient queried at $\bx_{t}$. $\F_{t}\defeq\sigma\left(\bg_{1},\mydots,\bg_{t}\right)$
is the natural filtration, and $\E_{t}\left[\cdot\right]\defeq\E\left[\cdot\mid\F_{t}\right]$
represents the conditional expectation given $\F_{t}$.

Our analysis also relies on the next two assumptions.
\begin{assumption}[Unbiased gradient]
\label{assu:unbias}The stochastic gradient satisfies $\E_{t-1}\left[\bg_{t}\right]=\nabla f(\bx_{t})$.
\end{assumption}

\begin{assumption}[Heavy-tailed noise]
\label{assu:heavy}$\exists\p\in\left(1,2\right]$ and $\bsig\in\R_{\geq0}^{d}$
such that $\E_{t-1}\left[\left|\bxi_{t,i}\right|^{\p}\right]\leq\bsig_{i}^{\p},\forall i\in\left[d\right]$,
where $\bxi_{t,i}\triangleq\bg_{t,i}-\nabla_{i}f(\bx_{t})$.
\end{assumption}

\begin{rem}
Our proof strategy still works when replacing Assumption \ref{assu:heavy}
with a weaker version: $\E_{t-1}\left[\left|\bxi_{t,i}\right|^{\p_{i}}\right]\leq\bsig_{i}^{\p_{i}},\forall i\in\left[d\right]$,
where $\p_{i}\in\left(1,2\right]$ can take different values for different
coordinates. However, to make the work more concise, we keep the current
simpler version.
\end{rem}

Assumption \ref{assu:unbias} is a common condition in stochastic
optimization. Assumption \ref{assu:heavy} appears in \citet{pmlr-v267-chezhegov25a}
and differs slightly from the popular one for heavy-tailed noise in
the literature, which typically takes the form of $\E_{t-1}\left[\left\Vert \bxi\right\Vert _{2}^{\p}\right]\leq\sigma^{\p}$
for some $\sigma\geq0$. It can be interpreted as a coordinate-wise
version of heavy-tailed noise, which is natural in our setting since
$\AdaGrad$ employs a coordinate-specific update rule. Moreover, Assumption
\ref{assu:heavy} also generalizes the coordinate-wise finite-variance
assumption considered in prior works \citep{pmlr-v80-bernstein18a,bernstein2018signsgd,pmlr-v291-jiang25c,liu2025adagrad,JMLR:v26:24-0523}.
\begin{rem}
A seemingly similar condition to Assumption \ref{assu:heavy}, but
in fact fundamentally different, is Assumption 2 in \citet{NEURIPS2020_b05b57f6},
where each coordinate of the stochastic gradient $\bg_{t,i}$ is assumed
to have a finite $\p$-th moment, rather than each coordinate of the
noise $\bxi_{t,i}$.
\end{rem}

\section{$\protect\AdaGrad$ under Heavy-Tailed Noise\label{sec:AdaGrad}}

The optimizer focused on in this work, $\AdaGrad$, is described in
Algorithm \ref{alg:AdaGrad}. $\AdaGrad$ was independently introduced
in two pioneering works by \citet{McMahanS10} and \citet{JMLR:v12:duchi11a}.
The key mechanism of $\AdaGrad$ is to dynamically adjust the stepsize,
i.e., $\frac{\gamma}{\lambda+\sqrt{\bv_{t}}}$, based on all stochastic
gradient information up to the current iteration in a coordinate-wise
manner.

\subsection{Upper Bound}

We present the first provable upper bound for $\AdaGrad$ under heavy-tailed
noise in the following Theorem \ref{thm:main-AdaGrad-ub}, the proof
of which is deferred to Appendix \ref{sec:AdaGrad-ub}.
\begin{thm}
\label{thm:main-AdaGrad-ub}Under Assumptions \ref{assu:lb}, \ref{assu:smooth},
\ref{assu:unbias}, and \ref{assu:heavy}, let $\Delta\triangleq f(\bx_{1})-f_{\star}$,
then for any $\gamma>0$ and $\lambda>0$, $\AdaGrad$ (Algorithm
\ref{alg:AdaGrad}) guarantees
\begin{align*}
 & \E\left[\frac{1}{T}\sum_{t=1}^{T}\left\Vert \nabla f(\bx_{t})\right\Vert _{1}\right]\\
\leq & \O\left(\frac{A}{\sqrt{T}}+\frac{C\left\Vert \bsig\right\Vert _{1}}{T^{\frac{\p-1}{\p}}}+\frac{\sqrt{B\left\Vert \bsig\right\Vert _{1}}}{T^{\frac{\p-1}{2\p}}}+\frac{\sqrt{C}\left\Vert \bsig\right\Vert _{1}}{T^{\frac{3\p-4}{4\p}}}\right),
\end{align*}
where $A\defeq d\lambda+\frac{\Delta}{\gamma}+\gamma\left\Vert \bL\right\Vert _{1}\ln\res_{T}$,
$B\defeq\frac{\Delta}{\gamma}+\gamma\left\Vert \bL\right\Vert _{1}\ln\res_{T}$,
$C\defeq\ln^{\frac{1}{\cp}+\frac{1}{2}}\res_{T}$, and $\res_{T}$
is in the order of $\poly(T,\left\Vert \bL\right\Vert _{1},\left\Vert \bL\right\Vert _{\infty},\left\Vert \bsig\right\Vert _{\infty},\left\Vert \nabla f(\bx_{1})\right\Vert _{\infty},\gamma,1/\lambda)$.
\end{thm}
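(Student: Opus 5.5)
We would follow the standard descent-lemma route for $\AdaGrad$, working coordinate-wise and using a decorrelated (``proxy'') stepsize to separate the stepsize from the current noise. Write $\eta_{t,i}\defeq\frac{\gamma}{\lambda+\sqrt{\bv_{t,i}}}$, and define the $\F_{t-1}$-measurable proxy $\tilde{\eta}_{t,i}\defeq\frac{\gamma}{\lambda+\sqrt{\bv_{t-1,i}+\nabla_{i}f(\bx_{t})^{2}}}$ (or a variant with $\bv_{t-1,i}$ only). By Assumption \ref{assu:smooth}, for each $t$,
\[
f(\bx_{t+1})\leq f(\bx_{t})-\sum_{i}\eta_{t,i}\nabla_{i}f(\bx_{t})\bg_{t,i}+\frac{1}{2}\sum_{i}\bL_{i}\eta_{t,i}^{2}\bg_{t,i}^{2}.
\]
Summing over $t$ and using $\sum_{t}\frac{\bg_{t,i}^{2}}{(\lambda+\sqrt{\bv_{t,i}})^{2}}\leq\ln(1+\bv_{T,i}/\lambda^{2})+O(1)$, the quadratic term contributes $\gamma^{2}\left\Vert \bL\right\Vert _{1}\ln\res_{T}$ once we bound $\E[\ln \bv_{T,i}]$ via Jensen and the $p$-th moment. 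This is the source of the $\ln\res_{T}$ factors. Note that $\E\bv_{T,i}$ may be infinite, so we use $\E\ln(1+\bv_{T,i})\leq \frac{2}{p}\ln(1+\E\bv_{T,i}^{p/2})$ with $\bv_{T,i}^{p/2}\leq\sum_{t}|\bg_{t,i}|^{p}$. Controlling $\|\nabla f(\bx_t)\|_\infty$ polynomially requires a deterministic bound on the movement $|\bx_{t+1,i}-\bx_{t,i}|\leq\gamma$, which yields the $\poly$ dependence of $\res_T$.

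For the first-order term, split $\eta_{t,i}\nabla_{i}f\bg_{t,i}=\tilde{\eta}_{t,i}\nabla_{i}f\bg_{t,i}+(\eta_{t,i}-\tilde{\eta}_{t,i})\nabla_{i}f\bg_{t,i}$. The first piece has conditional expectation $\tilde{\eta}_{t,i}\nabla_{i}f(\bx_{t})^{2}$. The error piece is where heavy tails enter. We would bound $|\eta_{t,i}-\tilde{\eta}_{t,i}|\leq\frac{\gamma|\bg_{t,i}^{2}-\nabla_{i}f^{2}|}{(\lambda+\sqrt{\bv_{t,i}})(\lambda+\sqrt{\cdot})(\sqrt{\bv_{t,i}}+\sqrt{\cdot})}$ and use $|\bg^{2}-\nabla f^{2}|\leq|\bxi|(|\bxi|+2|\nabla f|)$. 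Young's inequality then absorbs half of $\tilde{\eta}\nabla_i f^2$ and leaves terms like $\gamma|\bxi_{t,i}|\min\{1,|\bxi_{t,i}|/\sqrt{\bv_{t-1,i}}\}$ type quantities. For these we apply the truncation estimate $\E|\xi|\min\{1,|\xi|/a\}\leq\sigma^{p}a^{1-p}$, or an Abel-sum version $\sum_t |\bxi_{t,i}|^{2}/\sqrt{\bv_{t,i}}$ bounded through $\bv_{T,i}^{1-p/2}$-type potentials with Hölder. This is where the $\bsig^{p}$ and $\ln^{1/\cp}$ factors come from.

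After this step we obtain
\[
\E\sum_{t}\sum_{i}\frac{\nabla_{i}f(\bx_{t})^{2}}{\lambda+\sqrt{\bv_{t-1,i}+\nabla_{i}f^{2}}}\lesssim B+\text{(noise terms)}.
\]
Converting to $\|\nabla f\|_{1}$ uses Cauchy--Schwarz, giving
\[
\sum_t|\nabla_i f|\leq\sqrt{\sum_t\frac{\nabla_i f^2}{\lambda+\sqrt{\bv_{T,i}}}}\sqrt{T(\lambda+\sqrt{\bv_{T,i}})}.
\]
We would bound $\sqrt{\bv_{T,i}}\leq\sqrt{2\sum_t\nabla_i f^2}+\sqrt{2\sum_t\bxi_{t,i}^2}$. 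The first part yields the $A/\sqrt{T}$ term after solving a quadratic inequality. The second is handled by Hölder in expectation with $\E(\sum_t\bxi^2)^{1/2}\leq(\sum_t\E|\bxi|^p)^{1/p}\leq T^{1/p}\bsig_i$, which produces the $\sqrt{B\|\bsig\|_1}/T^{(p-1)/(2p)}$ term. Because the random quantities are coupled (a product of two random sums), Hölder with exponents tuned to the moment $p$ gives the stated mixture of rates. The $\sqrt{C}\|\bsig\|_1/T^{(3p-4)/(4p)}$ term arises from taking the square root of the noise-error sum, which grows like $T^{(2-p)/p}\cdot$ polylog, multiplied by $T^{1/p}$, then divided by $T$.

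The main obstacle will be the stepsize–noise correlation term. Under finite variance it is controlled by $\E\bxi^{2}/\sqrt{\bv}$, but here $\E\bxi^{2}=\infty$. It can only be handled through truncation at the level $\sqrt{\bv_{t-1,i}}$, which is itself random and may be small early on. The degradation to exponent $\frac{3p-4}{4p}$, vacuous for $p\leq 4/3$, reflects exactly this loss. We would first try to bound $\E\sum_t\frac{|\bxi_{t,i}|^2\wedge(\cdot)}{\lambda+\sqrt{\bv_{t,i}}}$ by a telescoping potential $\sum_t(\bv_{t,i}^{1-p/2}-\bv_{t-1,i}^{1-p/2})$-type argument, obtaining order $\E\,\bv_{T,i}^{(2-p)/2}\lesssim(T\bsig_i^p)^{(2-p)/p}$ up to logarithms.
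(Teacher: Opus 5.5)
There is a genuine gap, and it sits at the step you yourself flag as the main obstacle: your treatment of the stepsize--noise correlation term cannot produce the exponent $\frac{3\p-4}{4\p}$.

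Your proxy contains no extra deterministic term. After Young's inequality, the leftover $X_t^{2}/(\lambda+\sqrt{\bw_{t,i}})$, with $X_t=\E_{t-1}[|\bxi_{t,i}||\bg_{t,i}|/(\lambda+\sqrt{\bv_{t,i}})]$, therefore has only $\lambda$ (or the random and possibly tiny $\sqrt{\bv_{t-1,i}}$) to lower bound its denominator. Your truncation/potential argument then yields, by your own estimate, a noise contribution of order $\E[\bv_{T,i}^{(2-\p)/2}]$, i.e.\ $\bsig_{i}T^{\frac{2-\p}{\p}}$ up to logarithms once units are restored.

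Feed a noise sum $N\approx\left\Vert \bsig\right\Vert _{1}T^{\frac{2-\p}{\p}}$ into the conversion step, $\E[\sum_{i}\sqrt{\bu_{T,i}}]\lesssim N+\sqrt{N\left\Vert \bsig\right\Vert _{1}T^{1/\p}}$, and divide by $\sqrt{T}$. You get $\left\Vert \bsig\right\Vert _{1}T^{\frac{3-2\p}{2\p}}$, i.e.\ the rate $T^{-\frac{2\p-3}{2\p}}$. This is strictly worse than $T^{-\frac{3\p-4}{4\p}}$ for every $\p<2$ and vacuous for $\p\leq3/2$. It is exactly the rate the paper says one obtains with a fixed proxy constant such as $\bc=\bsig$. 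Your closing claim that a $T^{\frac{2-\p}{\p}}$ noise sum gives $T^{-\frac{3\p-4}{4\p}}$ is an arithmetic slip. Reaching that exponent requires a noise sum of order $\left\Vert \bsig\right\Vert _{1}T^{\frac{1}{\p}-\frac{1}{2}}$, half of your exponent.

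The missing idea is to build a deterministic, $T$-dependent constant into the proxy: $\bw_{t}=\bv_{t-1}+(\nabla f(\bx_{t}))^{2}+\bc^{2}$. This stays $\F_{t-1}$-measurable because $\bc$ may depend on $T$ and on expectations. Then $\lambda+\sqrt{\bw_{t,i}}\geq\bc_{i}$ surely, and H\"{o}lder with the $\p$-th moment gives the per-step bound $\frac{\bsig_{i}^{2}}{\bc_{i}}(\E[\frac{\bg_{t,i}^{2}}{\lambda^{2}+\bv_{t,i}}])^{2/\cp}+\bc_{i}\E[\frac{\bg_{t,i}^{2}}{\lambda^{2}+\bv_{t,i}}]$. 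The second piece is the price of the $-\bc_{i}^{2}$ in $\bv_{t,i}-\bw_{t,i}$.

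Summing with concavity of $x^{2/\cp}$ gives $\frac{\bsig_{i}^{2}}{\bc_{i}}T^{1-\frac{2}{\cp}}D_{T,i}^{2/\cp}+\bc_{i}D_{T,i}$, where $D_{T,i}$ is the logarithmic bound on $\E[\ln(1+\bv_{T,i}/\lambda^{2})]$. Choosing $\bc_{i}=\bsig_{i}(T/D_{T,i})^{\frac{1}{2}-\frac{1}{\cp}}$ balances this to $2\bsig_{i}T^{\frac{1}{\p}-\frac{1}{2}}D_{T,i}^{\frac{1}{\cp}+\frac{1}{2}}$.

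Since $\bc_{i}\leq\bsig_{i}T^{1/\p}$ whenever $D_{T,i}\geq1$ (and $D_{T,i}<1$ forces $\E[\sqrt{\bu_{T,i}}]=\O(\lambda)$), the extra $\bc_{i}^{2}$ costs nothing in your Cauchy--Schwarz conversion. Note that there you need $\lambda+\sqrt{\bv_{T,i}+\bu_{T,i}+\bc_{i}^{2}}$ rather than $\lambda+\sqrt{\bv_{T,i}}$ in the denominator, since $\bw_{t,i}$ contains $(\nabla_{i}f(\bx_{t}))^{2}$ and $\bc_{i}^{2}$.

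The rest of your outline is sound and close to the paper's: the descent lemma, the $\ln\res_{T}$ bookkeeping via the movement bound, $\E[\sqrt{\bv_{T,i}}]\leq\sqrt{2}\bsig_{i}T^{1/\p}+\E[\sqrt{2\bu_{T,i}}]$, and solving the final quadratic.
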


\begin{rem}
For the precise definition of $\res_{T}$, we refer the reader to
Theorem \ref{thm:AdaGrad-ub}.
\end{rem}

To the best of our knowledge, Theorem \ref{thm:main-AdaGrad-ub} provides
the first convergence rate for $\AdaGrad$ under heavy-tailed noise
when the tail index $\p$ lies in the regime $\left(\nicefrac{4}{3},2\right]$.
Remarkably, this result holds under the standard assumptions without
requiring any algorithmic modifications or prior knowledge of problem-dependent
parameters. In contrast, the minimax rate $\Theta(1/T^{\frac{\p-1}{3\p-2}})$
in the literature has been achieved only when problem-dependent parameters,
particularly the value of $\p$, are assumed to be known.

The important feature of Theorem \ref{thm:main-AdaGrad-ub} is its
adaptivity. First, it is adaptive to the tail index $\p$. In other
words, $\AdaGrad$ can automatically adapt to the largest admissible
value of $\p$ without any tuning. In particular, in the boundary
case $\p=2$, Theorem \ref{thm:main-AdaGrad-ub} recovers the well-known
$\tilde{\O}(1/T^{\frac{1}{4}})$ rate of $\AdaGrad$, matching existing
results in the literature \citep{pmlr-v195-wang23a,pmlr-v291-jiang25c}.
Next, the convergence rate is also adaptive to the noise level $\left\Vert \bsig\right\Vert _{1}$,
as in the existing analysis of $\AdaGrad$ under $\p=2$. More precisely,
Theorem \ref{thm:main-AdaGrad-ub} recovers the best possible rate
$\tilde{\O}(1/T^{\frac{1}{2}})$ in the noiseless case when $\bsig=\bzero$
(or $\left\Vert \bsig\right\Vert _{1}$ is sufficiently small to be
negligible). In summary, Theorem \ref{thm:main-AdaGrad-ub} is the
first time demonstrating that $\AdaGrad$ is simultaneously adaptive
to both the tail index $\p$ and the noise level $\left\Vert \bsig\right\Vert _{1}$.

Therefore, we would like to highlight that, as far as we know, this
is the first concrete theoretical evidence supporting the advantage
of adaptive gradient methods over $\SGD$ under heavy-tailed noise.

\paragraph{Novel technique in the analysis.}

Now, we discuss the novel part of our proof. Following the literature
on $\AdaGrad$, our analysis also employs a proxy stepsize proposed
by \citet{pmlr-v97-ward19a} (also known as decorrelated stepsize
\citep{pmlr-v178-faw22a}), i.e., a vector $\bw_{t}\in\R_{\geq0}^{d}$
that is predictable (in other words, $\bw_{t}\in\F_{t-1}$) to approximate
$\bv_{t}$. As far as we know, there are typically two choices for
$\bw_{t}$ in the literature. One is to set $\bw_{t}\defeq\bv_{t-1}+(\nabla f(\bx_{t}))^{2}+\bsig^{2}$
\citep{pmlr-v97-ward19a}. The other is to set $\bw_{t}\defeq\bv_{t-1}$
\citep{pmlr-v195-wang23a}.

The technical contribution of our analysis is to further generalize
the first kind of proxy stepsize. Concretely, we set
\[
\bw_{t}\defeq\bv_{t-1}+(\nabla f(\bx_{t}))^{2}+\bc^{2},
\]
where $\bc\in\R_{\geq0}^{d}$ is a free parameter that can be determined
at the end of the proof. Though this extension seems simple, it is
in fact critical to the analysis. If one simply picks $\bc=\bsig$
as in many existing works (e.g., \citet{pmlr-v291-jiang25c}), the
best possible rate we can derive is in the order of $\tilde{\O}(1/T^{\frac{2\p-3}{2\p}})$,
which is always worse than the rate $\tilde{\O}(1/T^{\frac{3\p-4}{4\p}})$
given in Theorem \ref{thm:main-AdaGrad-ub}. Instead, in our proof,
we pick
\[
\bc_{i}\defeq\frac{\bsig_{i}T^{\frac{1}{2}-\frac{1}{\cp}}}{D_{T,i}^{\frac{1}{2}-\frac{1}{\cp}}},\forall i\in\left[d\right],
\]
where
\[
D_{T,i}\defeq2\ln\left(1+\frac{\bsig_{i}T^{\frac{1}{\p}}+\E\left[\sqrt{\sum_{t=1}^{T}(\nabla_{i}f(\bx_{t}))^{2}}\right]}{\lambda/\sqrt{2}}\right).
\]
This choice turns out to be the key to obtaining the rate $\tilde{\O}(1/T^{\frac{3\p-4}{4\p}})$
stated in Theorem \ref{thm:main-AdaGrad-ub}.

In particular, our choice of $\bc$ degenerates to $\bsig$ when $\p=2$,
since $\frac{1}{2}-\frac{1}{\cp}=0$ due to $\cp=\frac{\p}{\p-1}=2$,
meaning that our $\bw_{t}$ naturally recovers the popular proxy stepsize
$\bw_{t}=\bv_{t-1}+(\nabla f(\bx_{t}))^{2}+\bsig^{2}$ in the classical
finite-variance situation. Thus, our technique is indeed a novel generalization.

For more details of this technique, see Appendix \ref{sec:AdaGrad-ub}.

\subsection{Algorithm-Dependent Lower Bound}

\begin{algorithm*}[t]
\caption{\label{alg:AdaGradNorm}$\protect\AdaGradNorm$ \citep{pmlr-v97-ward19a}}

\textbf{Input:} initial point $\bx_{1}\in\R^{d}$, learning rate $\gamma>0$,
hyperparameter $\lambda>0$

\textbf{Initialization:} $v_{0}=0$

\textbf{for} $t=1$ \textbf{to} $T$ \textbf{do}

$\quad$$v_{t}=v_{t-1}+\left\Vert \bg_{t}\right\Vert _{2}^{2}$

$\quad$$\bx_{t+1}=\bx_{t}-\frac{\gamma}{\lambda+\sqrt{v_{t}}}\bg_{t}$

\textbf{end for}
\end{algorithm*}

In this subsection, we provide the first algorithm-dependent lower
bound for $\AdaGrad$ under heavy-tailed non-convex optimization.
\begin{thm}
\label{thm:main-AdaGrad-lb-1d}Let $d=1$, for any given $\Delta>0$,
$L>0$, $\p\in\left(1,2\right]$, $\sigma\geq0$, $0<\epsilon\leq\sqrt{2\Delta L}$,
$\sx_{1}\in\R$, $\gamma>0$, there exists a function $f:\R\to\R$
associated with a stochastic gradient oracle $g$ satisfying Assumptions
\ref{assu:lb} (and also $f(\sx_{1})-\inf_{\sx\in\R}f(\sx)\leq\Delta$),
\ref{assu:smooth} (with parameter $L$), \ref{assu:unbias}, and
\ref{assu:heavy} (with parameters $\p$ and $\sigma$). Moreover,
if using $\AdaGrad$ (Algorithm \ref{alg:AdaGrad}), with initial
point $\sx_{1}$, learning rate $\gamma$, and hyperparameter $\lambda=0$,
to optimize $f$ by interacting with $g$, one must use at least
\[
\Omega\left(\frac{\frac{\Delta^{2}}{\gamma^{2}}+\gamma^{2}L^{2}\ln^{2}(\frac{\gamma L}{\epsilon})}{\epsilon^{2}}+\frac{\left(\frac{\Delta^{2}}{\gamma^{2}}+\gamma^{2}L^{2}\ln^{2}(\frac{\gamma L}{\epsilon})\right)\sigma^{\frac{\p}{\p-1}}}{\epsilon^{\frac{3\p-2}{\p-1}}}\right)
\]
iterations to make $\E\left[\frac{1}{T}\sum_{t=1}^{T}\left|f'(\sx_{t})\right|\right]\leq\O(\epsilon)$
for small enough $\epsilon$.
\end{thm}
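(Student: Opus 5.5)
The lower bound has three parts: $\Delta^2/(\gamma^2\epsilon^2)$, $\gamma^2L^2\ln^2(\gamma L/\epsilon)/\epsilon^2$, and the same two prefactors multiplied by $\sigma^{\bar p}/\epsilon^{\frac{3p-2}{p-1}}$. I will prove each with a separate one-dimensional hard instance and take the maximum. That suffices, because $\max\{a,b\}\geq (a+b)/2$ and $\max$ of the deterministic and noisy bounds is within a constant of their sum. Throughout, $\lambda=0$, so the step is $\gamma g_t/\sqrt{v_t}$ with $v_t=\sum_{s\le t}g_s^2$. The deterministic instances use $\sigma=0$.

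\textbf{Deterministic part.} For the $\Delta^2/(\gamma^2\epsilon^2)$ term I take a linear function with slope $2\epsilon$ near $x_1$, Huberized so that $f$ is $L$-smooth and $f(x_1)-\inf f\le\Delta$. Plain AdaGrad with constant gradient $2\epsilon$ gives $x_{t}-x_1=\gamma\sum_{s<t}1/\sqrt{s}\approx 2\gamma\sqrt{t}$. Hence the iterate stays in the region where $|f'|=2\epsilon$ until $2\gamma\sqrt{T}\gtrsim \Delta/\epsilon$, which forces $T\gtrsim \Delta^2/(\gamma^2\epsilon^2)$.

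For the $\gamma^2L^2\ln^2(\gamma L/\epsilon)/\epsilon^2$ term I use the quadratic $f(x)=\frac L2 x^2$ with $x_1$ chosen so that $|f'(x_1)|=\gamma L$ or so. Large first steps $\gamma\,\mathrm{sign}(g)$ overshoot when $\gamma L$ is large. The idea follows the construction of \citet{pmlr-v291-jiang25c}: the iterates oscillate, and $\sqrt{v_t}$ grows until it reaches roughly $\gamma L$, which takes an accumulated squared gradient of order $\gamma^2L^2$. After that the iteration contracts geometrically, at a rate governed by $\gamma L/\sqrt{v_t}$. The conclusion to aim for is $\sum_t|f'(x_t)|\gtrsim \gamma L\ln(\gamma L/\epsilon)$ before $|f'|$ drops below $\epsilon$. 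Since $v_T\geq\sum_t f'(x_t)^2$, Cauchy--Schwarz then gives $T\epsilon\ge\sum|f'|$, so $T\gtrsim\gamma L\ln/\epsilon$. That is too weak, since I need the square of this.

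To get the square I instead exploit the fact that once $\sqrt{v_t}$ has grown to about $\gamma L\ln(\gamma L/\epsilon)$, the effective step $\gamma/\sqrt{v_t}$ is tiny. I would combine the quadratic with a linear tail of slope $2\epsilon$, as in the first instance. The iterates then travel like $\gamma\,2\epsilon t/\sqrt{V_0+4\epsilon^2t}$ with $\sqrt{V_0}\approx\gamma L\ln(\gamma L/\epsilon)$. This requires $T\gtrsim V_0/\epsilon^2$ before the gradient can become small. The main work is tuning the distance to the tail so that $\Delta$ is respected.

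\textbf{Heavy-tailed part.} This part uses the same functions, with an oracle that returns $f'(x)$ plus a rare large spike. With probability $q$ the oracle outputs $f'(x)+M$; otherwise it outputs $f'(x)-qM/(1-q)$. I set $qM^p\approx\sigma^p$, so Assumption~\ref{assu:heavy} holds and the noise is unbiased. Spikes inflate $v_t$: after $t$ steps, $v_t\gtrsim tqM^2=t\sigma^pM^{2-p}$. With $M$ tuned to $\epsilon$-dependent scale (choose $M\approx(\sigma^p/\epsilon^{?})$), the effective stepsize shrinks by a factor $\sqrt{v_t}/(\epsilon\sqrt t)$. Repeating the displacement argument with this larger $v_t$, the number of iterations needed becomes the deterministic count multiplied by $(\sigma/\epsilon)^{\bar p}$, which gives $\frac{(\cdot)\sigma^{\bar p}}{\epsilon^{\bar p+2}}$ since $\frac{3p-2}{p-1}=2+\bar p$.

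Two things need care here. First, the spikes themselves move the iterate, by $\gamma M/\sqrt{v_t}$; they must not carry the iterate out of the linear region. Second, the expectation of $\frac1T\sum|f'|$ must stay above $\epsilon$. I would handle this by taking $q\approx 1/T$-scale and placing the tail far away, so that spike displacements are bounded by $\gamma$ each. The main obstacle I expect is this balancing of $M$, $q$, and the domain geometry so that the $\epsilon$ exponent comes out exactly at $\frac{3p-2}{p-1}$.
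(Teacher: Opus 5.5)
Your $\Delta^2/(\gamma^2\epsilon^2)$ instance is fine, but the quadratic-plus-linear-tail instance cannot give the $\gamma^2L^2\ln^2(\gamma L/\epsilon)/\epsilon^2$ term. That term only dominates when $\gamma^2L\ln(\gamma L/\epsilon)\gtrsim\Delta$. Keeping the iterate on a slope-$2\epsilon$ tail for $V_0/\epsilon^2$ steps with $v_t\approx V_0$ needs a tail of length $\gtrsim\gamma\sqrt{V_0}/\epsilon$. Along it $f$ drops by $\gtrsim\gamma\sqrt{V_0}\approx\gamma^2L\ln(\gamma L/\epsilon)$, which exceeds $\Delta$ in exactly that regime. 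If you cap the tail at length $\Delta/(2\epsilon)$ instead, you get only about $\Delta\sqrt{V_0}/(\gamma\epsilon^2)\approx\Delta L\ln(\gamma L/\epsilon)/\epsilon^2$. The only escape is for $f$ to rise by $\Omega(\gamma^2L\ln(\gamma L/\epsilon))$ above $f(x_1)$ at points the iterates never query. A quadratic cannot do this: on $\frac L2x^2$ AdaGrad keeps $|x_t|\le\max\{|x_1|,\gamma\}$, so $f$ rises by $O(L\gamma^2)$ with no logarithm. Nothing in your sketch drives $\sqrt{v_t}$ to $\gamma L\ln(\gamma L/\epsilon)$ either. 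The missing idea is to build $f$ along AdaGrad's own deterministic path $y_{t+1}=y_t+\delta_t$, $\delta_t=\gamma/\sqrt t$ (constant $|g|=\epsilon$, $\lambda=0$). Impose $f'(y_t)=-\epsilon$ and put a triangular bump of height $L\delta_t/2$ in $f'$ on each segment. Each segment then changes $f$ by $-\epsilon\delta_t+\frac L4\delta_t^2$, which is positive while $\delta_t>4\epsilon/L$. So the budget $\Delta-\epsilon\sum_{t\le T}\delta_t+\frac L4\sum_{t\le T}\delta_t^2\ge\epsilon^2/(2L)$ survives until roughly $2\gamma\epsilon\sqrt T\gtrsim\Delta+\frac{L\gamma^2}{4}\ln T$. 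A Lambert-$W$ estimate then yields both the $\Delta^2/\gamma^2$ and the $\gamma^2L^2\ln^2$ terms from this single instance.

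The heavy-tailed part also fails as proposed. Your additive-spike oracle returns $f'(x)-qM/(1-q)\neq0$ between spikes, so by scale invariance AdaGrad moves at exactly the deterministic speed until the first spike. Making the slowdown $qM^2/\epsilon^2$ equal to $(\sigma/\epsilon)^{\bar p}$ under $qM^p\approx\sigma^p$ forces $M=\epsilon/q$ and $q=(\epsilon/\sigma)^{\bar p}$. The first spike then arrives after about $(\sigma/\epsilon)^{\bar p}$ rounds. For $p<2$ and small $\epsilon$ this far exceeds the deterministic exit time $\Delta^2/(\gamma^2\epsilon^2)$, so the iterate escapes before $v_t$ is ever inflated. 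Shrinking $M$ so that spikes arrive in time gives a weaker factor for small $\epsilon$. The fix is the multiplicative oracle $g=\frac rq f'(x)$ with $r\sim\mathrm{Bernoulli}(q)$ on the points $y_t$. With $\lambda=0$, rounds with $r=0$ give $g=0$ and change neither $x_t$ nor $v_t$. Rounds with $r=1$ give $g=-\epsilon/q$, whose step is $\gamma/\sqrt{R_s}=\delta_{R_s}$ with $R_s=\sum_{\ell\le s}r_\ell$, exactly the $R_s$-th deterministic step. The noisy run is therefore the deterministic run time-changed by $R_t$, and Markov's inequality on $R_T$ gives $T\ge(T_\star-1)/(2q)$. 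The moment constraint $q(\epsilon/q)^p\lesssim\sigma^p$ allows $1/q\asymp1+(\sigma/\epsilon)^{\bar p}$, which is exactly where the exponent $2+\bar p=\frac{3p-2}{p-1}$ comes from.
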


\begin{rem}
The requirement of $\lambda=0$ is only for simplicity. See Theorem
\ref{thm:AdaGrad-lb-1d} for the full version that allows $\lambda\geq0$.
\end{rem}

\begin{rem}
For simplicity, we restrict our attention to the case $d=1$. Following
the proof of Theorem 4 in \citet{pmlr-v291-jiang25c}, Theorem \ref{thm:main-AdaGrad-lb-1d}
can be extended to the high-dimensional setting.
\end{rem}

Theorem \ref{thm:main-AdaGrad-lb-1d} provides the first algorithm-dependent
lower bound for $\AdaGrad$ in the case $d=1$, explicitly capturing
the dependence on the input learning rate.

To better understand Theorem \ref{thm:main-AdaGrad-lb-1d}, let us
first consider $\p=2$, corresponding to the finite-variance setting.
In this case, Theorem \ref{thm:main-AdaGrad-lb-1d} degenerates to
\[
\Omega\left(\frac{\frac{\Delta^{2}}{\gamma^{2}}+\gamma^{2}L^{2}\ln^{2}(\frac{\gamma L}{\epsilon})}{\epsilon^{2}}+\frac{\left(\frac{\Delta^{2}}{\gamma^{2}}+\gamma^{2}L^{2}\ln^{2}(\frac{\gamma L}{\epsilon})\right)\sigma^{2}}{\epsilon^{4}}\right).
\]
We claim that the above bound improves upon the existing one-dimensional
lower bound 
\[
\Omega\left(\frac{\Delta L\ln(\frac{\Delta L}{\epsilon^{2}})}{\epsilon^{2}}+\frac{\Delta L\sigma^{2}\ln(\frac{\Delta L}{\epsilon^{2}})}{\epsilon^{4}}\right)
\]
established by \citet{pmlr-v291-jiang25c} (see their Lemma 16). Indeed,
note that
\begin{align*}
 & \inf_{\gamma>0}\frac{\Delta^{2}}{\gamma^{2}}+\gamma^{2}L^{2}\ln^{2}\left(\frac{\gamma L}{\epsilon}\right)\\
= & \inf_{\eta>0}\frac{\Delta L}{2}\left(\frac{1}{\eta}+\eta\ln^{2}\left(\frac{2\eta\Delta L}{\epsilon^{2}}\right)\right)\geq\frac{\Delta L}{2}\ln\left(\frac{2\Delta L}{\epsilon^{2}}\right),
\end{align*}
where we substitute $\gamma=\sqrt{2\eta\Delta/L}$ in the first step
and apply Lemma \ref{lem:AdaGrad-lb-1d-numeric-2} in the second step.
Therefore, our lower bound for $\AdaGrad$ is strictly more refined
than the best-known one in the literature.

For general $\p\in\left(1,2\right]$, Theorem \ref{thm:main-AdaGrad-lb-1d}
is saying that, without any prior information on $\Delta$ and $L$,
$\AdaGrad$ is impossible to attain the minimax rate $\Omega\left(\frac{\Delta L}{\epsilon^{2}}+\frac{\Delta L\sigma^{\frac{\p}{\p-1}}}{\epsilon^{\frac{3\p-2}{\p-1}}}\right)$
\citep{NEURIPS2020_b05b57f6,liu2025nonconvex,pmlr-v313-liu26a} for
non-convex optimization under heavy-tailed noise. Moreover, even if
$\Delta$ and $L$ are known, Theorem \ref{thm:main-AdaGrad-lb-1d}
indicates that an extra polylogarithmic factor is also unavoidable
for $\AdaGrad$. These two facts together reveal some fundamental
limitations of $\AdaGrad$.

Since the proof is rather technical, we provide only a brief overview
here and refer the interested reader to Appendix \ref{sec:AdaGrad-lb}
for the analysis. Our proof builds upon the framework developed in
\citet{pmlr-v291-jiang25c} and \citet{pmlr-v258-hubler25a}. Concretely,
we show that for a certain stochastic gradient oracle, one can construct
a function parameterized by the learning rate $\gamma$ such that,
with constant probability, $f'(\sx_{t})\geq\Omega(\epsilon)$ for
any $t\in\left[T\right]$ if $T$ is smaller than a threshold that
also depends on $\gamma$. As a consequence, we derive an algorithm-dependent
lower bound that explicitly captures the dependence on the input learning
rate.

Finally, we suspect that our lower bound is not tight in $\epsilon$.
Improving it could be an interesting task that we hope will be addressed
in the future.

\section{$\protect\AdaGradNorm$ Can be Faster, Conditionally}

In this section, we consider a variant of $\AdaGrad$, namely $\AdaGradNorm$
(Algorithm \ref{alg:AdaGradNorm}). Unlike $\AdaGrad$, $\AdaGradNorm$
no longer maintains a coordinate-wise update rule. Instead, its stepsize
is now a scalar and is constructed based on the accumulated squared
norm of the stochastic gradients in history. Although $\AdaGradNorm$
is not widely implemented in practice, it is popular in theoretical
studies due to its simplicity.
\begin{rem}
Our analysis of $\AdaGradNorm$ still applies under the common norm-based
heavy-tailed assumption, i.e., $\E_{t-1}\left[\left\Vert \bxi_{t}\right\Vert _{2}^{\p}\right]\leq\sigma^{\p}$
for some $\sigma\geq0$. However, to avoid introducing more assumptions,
we still consider the coordinate-wise Assumption \ref{assu:heavy}
in the analysis of $\AdaGradNorm$.
\end{rem}

\subsection{A Faster Upper Bound}
\begin{thm}
\label{thm:main-AdaGradNorm-ub-bounded}Under Assumptions \ref{assu:lb},
\ref{assu:smooth}, \ref{assu:unbias}, and \ref{assu:heavy}, suppose
$f^{\star}\defeq\sup_{\bx\in\R^{d}}f(\bx)<+\infty$, let $\Delta_{\star}\triangleq f^{\star}-f_{\star}$,
then for any $\gamma>0$ and $\lambda>0$, $\AdaGradNorm$ (Algorithm
\ref{alg:AdaGradNorm}) guarantees
\[
\E\left[\frac{1}{T}\sum_{t=1}^{T}\left\Vert \nabla f(\bx_{t})\right\Vert _{2}\right]\leq\O\left(\frac{A}{\sqrt{T}}+\frac{\sqrt{B\left\Vert \bsig\right\Vert _{\p}}}{T^{\frac{\p-1}{2\p}}}\right),
\]
where $A\defeq\sqrt{\frac{\lambda\Delta_{\star}}{\gamma}}+\frac{\Delta_{\star}}{\gamma}+\gamma\left\Vert \bL\right\Vert _{\infty}$
and $B\defeq\frac{\Delta_{\star}}{\gamma}+\gamma\left\Vert \bL\right\Vert _{\infty}$.
\end{thm}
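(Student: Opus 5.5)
The plan is to avoid proxy stepsizes entirely. Because $f$ is bounded above and below, we can divide the descent inequality by the (random) stepsize, which removes all correlation between $\bg_t$ and the stepsize. Write $\eta_t\defeq\frac{\gamma}{\lambda+\sqrt{v_t}}$, which is nonincreasing in $t$, and set $\eta_0\defeq\gamma/\lambda$.

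\textbf{Descent step.} Assumption \ref{assu:smooth} gives $\left\Vert \bx-\by\right\Vert _{\bL}^{2}\leq\left\Vert \bL\right\Vert _{\infty}\left\Vert \bx-\by\right\Vert _{2}^{2}$, so
\[
f(\bx_{t+1})\leq f(\bx_t)-\eta_t\left\langle \nabla f(\bx_t),\bg_t\right\rangle+\tfrac{\left\Vert \bL\right\Vert _{\infty}}{2}\eta_t^{2}\left\Vert \bg_t\right\Vert _{2}^{2}.
\]
Dividing by $\eta_t>0$ and summing over $t$ gives
\[
\sum_{t=1}^{T}\left\langle \nabla f(\bx_t),\bg_t\right\rangle\leq\sum_{t=1}^{T}\frac{f(\bx_t)-f(\bx_{t+1})}{\eta_t}+\frac{\left\Vert \bL\right\Vert _{\infty}}{2}\sum_{t=1}^{T}\eta_t\left\Vert \bg_t\right\Vert _{2}^{2}.
\]

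\textbf{Bounding the two sums.} For the first sum, write $f(\bx_t)-f(\bx_{t+1})=(f(\bx_t)-f_\star)-(f(\bx_{t+1})-f_\star)$ and apply Abel summation. Every term $f(\bx_t)-f_\star$ lies in $[0,\Delta_\star]$ and every coefficient $1/\eta_t-1/\eta_{t-1}$ is nonnegative, so the first sum is at most $\Delta_\star/\eta_T=\frac{\Delta_\star(\lambda+\sqrt{v_T})}{\gamma}$. This is the only place the extra assumption $f^\star<\infty$ enters; Assumption \ref{assu:lb} alone would not control the telescoping against increasing weights. For the second sum, use the standard inequality $\sum_t \frac{\left\Vert \bg_t\right\Vert _2^2}{\lambda+\sqrt{v_t}}\leq\sum_t\frac{v_t-v_{t-1}}{\sqrt{v_t}}\leq 2\sqrt{v_T}$. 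Together these give
\[
\sum_{t}\left\langle \nabla f(\bx_t),\bg_t\right\rangle\leq\frac{\lambda\Delta_\star}{\gamma}+B\sqrt{v_T}.
\]
Now take expectations. Since $\bx_t\in\F_{t-1}$, Assumption \ref{assu:unbias} turns the left side into $S\defeq\E\left[\sum_t\left\Vert \nabla f(\bx_t)\right\Vert _2^2\right]$.

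\textbf{Controlling $\E\sqrt{v_T}$.} Expanding $\bg_t=\nabla f(\bx_t)+\bxi_t$ gives
\[
\sqrt{v_T}\leq\sqrt2\sqrt{\sum_t\left\Vert \nabla f(\bx_t)\right\Vert _2^2}+\sqrt2\sqrt{\sum_{t,i}\bxi_{t,i}^2}.
\]
For the noise term, treat $(\bxi_{t,i})_{t,i}$ as a single vector of length $Td$. Since $\ell_2\leq\ell_p$ for $p\leq2$, it is at most $\left(\sum_{t,i}\left|\bxi_{t,i}\right|^{\p}\right)^{1/\p}$. By Jensen and Assumption \ref{assu:heavy}, its expectation is at most $T^{1/\p}\left\Vert \bsig\right\Vert _{\p}$. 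Applying Jensen to the first term as well gives
\[
S\leq\frac{\lambda\Delta_\star}{\gamma}+\sqrt2B\sqrt S+\sqrt2BT^{1/\p}\left\Vert \bsig\right\Vert _{\p}.
\]
Solving this quadratic inequality in $\sqrt S$ yields $S\lesssim\frac{\lambda\Delta_\star}{\gamma}+B^2+BT^{1/\p}\left\Vert \bsig\right\Vert _{\p}$.

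\textbf{Conclusion.} By Cauchy–Schwarz and Jensen, $\E\left[\frac1T\sum_t\left\Vert \nabla f(\bx_t)\right\Vert _2\right]\leq\sqrt{S/T}$. This gives the bound $\frac{\sqrt{\lambda\Delta_\star/\gamma}+B}{\sqrt T}+\frac{\sqrt{B\left\Vert \bsig\right\Vert _{\p}}}{T^{\frac12-\frac1{2\p}}}$. Since $\frac12-\frac1{2\p}=\frac{\p-1}{2\p}$ and $B\leq A$, this is exactly the claimed rate.

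The main point to get right is the Abel-summation step, specifically checking that the boundary terms have the correct sign when $\eta_0=\gamma/\lambda$. The rest is routine.
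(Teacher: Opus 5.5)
Your proposal is correct and follows the paper's proof essentially step for step. You divide the smoothness descent inequality by the stepsize, bound the weighted telescoping sum by $\Delta_\star(\lambda+\sqrt{v_T})/\gamma$ via Abel summation using $f\le f^\star$, use $\sum_t \gamma_t\|\bg_t\|_2^2\le 2\gamma\sqrt{v_T}$, control $\E[\sqrt{v_T}]$ by $\sqrt{2}\|\bsig\|_{\p}T^{1/\p}+\sqrt{2S}$ exactly as in the paper's auxiliary lemma, and solve the resulting quadratic inequality in $\sqrt{S}$.

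The only implicit point, which the paper also leaves implicit, is that $S<\infty$ is needed for the rearrangement. It holds because each step has Euclidean length at most $\gamma$, so $\|\nabla f(\bx_t)\|_2\le\|\nabla f(\bx_1)\|_2+\gamma\|\boldsymbol{L}\|_\infty(t-1)$ almost surely.
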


\begin{rem}
In fact, we can prove a bound using $\E\left[\frac{1}{T}\sum_{t=1}^{T}\left\Vert \nabla f(\bx_{t})\right\Vert _{2}^{2}\right]$
as a stricter convergence metric (see (\ref{eq:main-AdaGradNorm-ub-bounded-5})
later).
\end{rem}

\begin{rem}
Without additionally assuming an upper boundedness on the objective
function, we can still show that $\AdaGradNorm$ converges at a rate
of $\tilde{\O}(1/T^{\frac{3\p-4}{4\p}})$, in the same order as Theorem
\ref{thm:main-AdaGrad-ub}. The interested reader could refer to Theorem
\ref{thm:AdaGradNorm-ub} in Appendix \ref{sec:AdaGradNorm-ub} for
details.
\end{rem}

The key result in this section is Theorem \ref{thm:main-AdaGradNorm-ub-bounded},
stating a faster rate for $\AdaGradNorm$ under an extra assumption
of bounded objectives, which has also been considered in the existing
literature on adaptive gradient methods (e.g., \citet{NEURIPS2021_ac10ff19,pmlr-v267-chezhegov25a}).

Before moving on, we would like to discuss Theorem \ref{thm:main-AdaGradNorm-ub-bounded}
further. First, it gives a faster rate $\O(1/T^{\frac{\p-1}{2\p}})$
without any extra polylogarithmic terms, which never becomes vacuous
for any $\p\in\left(1,2\right]$, significantly improving upon Theorem
\ref{thm:main-AdaGrad-ub}. Moreover, similar to Theorem \ref{thm:main-AdaGrad-ub},
it still adapts to the tail index $\p$ and the noise level $\left\Vert \bsig\right\Vert _{\p}$\footnote{Note that the change from $\left\Vert \bsig\right\Vert _{1}$ in Theorem
\ref{thm:main-AdaGrad-ub} to $\left\Vert \bsig\right\Vert _{\p}$
in Theorem \ref{thm:main-AdaGradNorm-ub-bounded} is reasonable and
can be expected, since we are using the $2$-norm as the convergence
metric.}, reflecting the power of adaptive gradient methods. More interestingly,
this result perfectly matches the best-known rate $\O(1/T^{\frac{\p-1}{2\p}})$
achieved by $\NSGD\text{(}\mathtt{M}\text{)}$ in the case where problem-dependent
parameters are unknown in advance \citep{pmlr-v258-hubler25a,liu2025nonconvex}.

Finally, we would like to comment that, even under the assumption
of bounded objective functions, it is unclear to us whether $\AdaGrad$
can achieve the rate $\O(1/T^{\frac{\p-1}{2\p}})$, since the proof
strategy of Theorem \ref{thm:main-AdaGradNorm-ub-bounded} is specifically
designed for $\AdaGradNorm$ (see Remark \ref{rem:main-AdaGradNorm-ub-bounded-remark-1}).

\subsection{Theoretical Analysis}

In this subsection, we aim to prove Theorem \ref{thm:main-AdaGradNorm-ub-bounded}.
Our proof is strongly inspired by the recent work of \citet{pmlr-v313-liu26a},
which shows that $\AdaGradNorm$ guarantees an optimal regret bound
in online convex optimization under heavy-tailed noise, even without
knowing $\p$. Although the problems studied are quite different,
one will see that the underlying ideas and proof strategies are closely
related (see Remark \ref{rem:main-AdaGradNorm-ub-bounded-remark-2}).

\begin{proof}[Proof of Theorem \ref{thm:main-AdaGradNorm-ub-bounded}]
In the following proof, let us denote the stepsize in $\AdaGradNorm$
by $\gamma_{t}\defeq\frac{\gamma}{\lambda+\sqrt{v_{t}}},\forall t\in\N$.

We start with Assumption \ref{assu:smooth} and use the update rule
of $\AdaGradNorm$ to obtain
\begin{align*}
f(\bx_{t+1}) & \leq f(\bx_{t})+\left\langle \nabla f(\bx_{t}),\bx_{t+1}-\bx_{t}\right\rangle +\frac{\left\Vert \bx_{t+1}-\bx_{t}\right\Vert _{\bL}^{2}}{2}\\
 & =f(\bx_{t})-\gamma_{t}\left\langle \nabla f(\bx_{t}),\bg_{t}\right\rangle +\frac{\gamma_{t}^{2}\left\Vert \bg_{t}\right\Vert _{\bL}^{2}}{2}\\
 & \leq f(\bx_{t})-\gamma_{t}\left\langle \nabla f(\bx_{t}),\bg_{t}\right\rangle +\frac{\gamma_{t}^{2}\left\Vert \bL\right\Vert _{\infty}\left\Vert \bg_{t}\right\Vert _{2}^{2}}{2}.
\end{align*}
Divide both sides of the above inequality by $\gamma_{t}$ and rearrange
terms to have
\[
\left\langle \nabla f(\bx_{t}),\bg_{t}\right\rangle \leq\frac{f(\bx_{t})-f(\bx_{t+1})}{\gamma_{t}}+\frac{\gamma_{t}\left\Vert \bL\right\Vert _{\infty}\left\Vert \bg_{t}\right\Vert _{2}^{2}}{2},
\]
which implies the following inequality after taking expectations on
both sides (due to Assumption \ref{assu:unbias}),
\begin{align}
 & \E\left[\left\Vert \nabla f(\bx_{t})\right\Vert _{2}^{2}\right]\nonumber \\
\leq & \E\left[\frac{f(\bx_{t})-f(\bx_{t+1})}{\gamma_{t}}\right]+\frac{\left\Vert \bL\right\Vert _{\infty}\E\left[\gamma_{t}\left\Vert \bg_{t}\right\Vert _{2}^{2}\right]}{2}.\label{eq:main-AdaGradNorm-ub-bounded-1}
\end{align}
\begin{rem}
\label{rem:main-AdaGradNorm-ub-bounded-remark-1}The above derivation
cannot be applied to $\AdaGrad$ due to the coordinate-wise stepsize
in it.
\end{rem}

\vspace{-0.1cm}
\begin{rem}
\label{rem:main-AdaGradNorm-ub-bounded-remark-2}The reader familiar
with the online convex optimization literature, and in particular
with the regret analysis of $\AdaGradNorm$, may notice that (\ref{eq:main-AdaGradNorm-ub-bounded-1})
is closely related to the standard inequality characterizing single-step
progress, if recognizing $f(\bx_{t})-f_{\star}$ as a notion of ``distance''.
Therefore, under the boundedness assumption on the objective function,
one may expect that $\E\left[\sum_{t=1}^{T}\left\Vert \nabla f(\bx_{t})\right\Vert _{2}^{2}\right]$
grows sublinearly in $T$.

However, the classical regret bound for $\AdaGradNorm$ is proved
in the deterministic setting (or under finite-variance noise). Thanks
to recent progress by \citet{pmlr-v313-liu26a}, which has proved
that $\AdaGradNorm$ is also robust to heavy-tailed noise and achieves
an optimal regret bound without knowing $\p$. Inspired by \citet{pmlr-v313-liu26a},
we are able to present the following analysis.
\end{rem}

We sum (\ref{eq:main-AdaGradNorm-ub-bounded-1}) from $t=1$ to $T$
to have
\begin{align}
 & \E\left[\sum_{t=1}^{T}\left\Vert \nabla f(\bx_{t})\right\Vert _{2}^{2}\right]\nonumber \\
\leq & \E\left[\sum_{t=1}^{T}\frac{f(\bx_{t})-f(\bx_{t+1})}{\gamma_{t}}\right]+\frac{\left\Vert \bL\right\Vert _{\infty}}{2}\E\left[\sum_{t=1}^{T}\gamma_{t}\left\Vert \bg_{t}\right\Vert _{2}^{2}\right].\label{eq:main-AdaGradNorm-ub-bounded-2}
\end{align}
Observe that
\begin{align}
 & \sum_{t=1}^{T}\frac{f(\bx_{t})-f(\bx_{t+1})}{\gamma_{t}}\nonumber \\
= & \frac{f(\bx_{1})-f_{\star}}{\gamma_{1}}-\frac{f(\bx_{T+1})-f_{\star}}{\gamma_{T}}\nonumber \\
 & +\sum_{t=1}^{T-1}\left(\frac{1}{\gamma_{t+1}}-\frac{1}{\gamma_{t}}\right)\left(f(\bx_{t+1})-f_{\star}\right)\nonumber \\
\leq & \frac{f(\bx_{1})-f_{\star}}{\gamma_{1}}+\sum_{t=1}^{T-1}\left(\frac{1}{\gamma_{t+1}}-\frac{1}{\gamma_{t}}\right)\left(f(\bx_{t+1})-f_{\star}\right)\nonumber \\
\overset{(a)}{\leq} & \frac{\Delta_{\star}}{\gamma_{T}}=\frac{\lambda\Delta_{\star}}{\gamma}+\frac{\Delta_{\star}}{\gamma}\sqrt{v_{T}},\label{eq:main-AdaGradNorm-ub-bounded-3}
\end{align}
where $(a)$ is due to
\begin{eqnarray*}
\frac{1}{\gamma_{t}}\leq\frac{1}{\gamma_{t+1}}, & f\leq f^{\star}, & \Delta_{\star}=f^{\star}-f_{\star}.
\end{eqnarray*}

Moreover, we note that
\begin{align}
\gamma_{t}\left\Vert \bg_{t}\right\Vert _{2}^{2} & =\frac{\gamma\left\Vert \bg_{t}\right\Vert _{2}^{2}}{\lambda+\sqrt{v_{t}}}\leq\frac{\gamma\left\Vert \bg_{t}\right\Vert _{2}^{2}}{\sqrt{\lambda^{2}+v_{t}}}\nonumber \\
 & \leq2\gamma\left(\sqrt{\lambda^{2}+v_{t}}-\sqrt{\lambda^{2}+v_{t-1}}\right)\nonumber \\
\Rightarrow\sum_{t=1}^{T}\gamma_{t}\left\Vert \bg_{t}\right\Vert _{2}^{2} & \leq2\gamma\left(\sqrt{\lambda^{2}+v_{T}}-\lambda\right)\leq2\gamma\sqrt{v_{T}}.\label{eq:main-AdaGradNorm-ub-bounded-4}
\end{align}

To ease the notation, we write $u_{T}=\sum_{t=1}^{T}\left\Vert \nabla f(\bx_{t})\right\Vert _{2}^{2}$.
Plug (\ref{eq:main-AdaGradNorm-ub-bounded-3}) and (\ref{eq:main-AdaGradNorm-ub-bounded-4})
back into (\ref{eq:main-AdaGradNorm-ub-bounded-2}) to obtain
\begin{align*}
 & \E\left[u_{T}\right]\leq\frac{\lambda\Delta_{\star}}{\gamma}+\left(\frac{\Delta_{\star}}{\gamma}+\gamma\left\Vert \bL\right\Vert _{\infty}\right)\E\left[\sqrt{v_{T}}\right]\\
\overset{(b)}{\leq} & \frac{\lambda\Delta_{\star}}{\gamma}+\sqrt{2}\left(\frac{\Delta_{\star}}{\gamma}+\gamma\left\Vert \bL\right\Vert _{\infty}\right)\left(\left\Vert \bsig\right\Vert _{\p}T^{\frac{1}{\p}}+\E\left[\sqrt{u_{T}}\right]\right)\\
\overset{(c)}{\leq} & \frac{\lambda\Delta_{\star}}{\gamma}+\sqrt{2}\left(\frac{\Delta_{\star}}{\gamma}+\gamma\left\Vert \bL\right\Vert _{\infty}\right)\left(\left\Vert \bsig\right\Vert _{\p}T^{\frac{1}{\p}}+\sqrt{\E\left[u_{T}\right]}\right),
\end{align*}
where $(b)$ holds by Lemma \ref{lem:main-AdaGradNorm-v} and $(c)$
is due to H\"{o}lder's inequality. Note that by AM-GM inequality
\begin{align*}
 & \sqrt{2}\left(\frac{\Delta_{\star}}{\gamma}+\gamma\left\Vert \bL\right\Vert _{\infty}\right)\sqrt{\E\left[u_{T}\right]}\\
\leq & \left(\frac{\Delta_{\star}}{\gamma}+\gamma\left\Vert \bL\right\Vert _{\infty}\right)^{2}+\frac{\E\left[u_{T}\right]}{2}.
\end{align*}
Next, we rearrange terms, plug in $u_{T}=\sum_{t=1}^{T}\left\Vert \nabla f(\bx_{t})\right\Vert _{2}^{2}$,
and divide both sides by $T$ to obtain
\begin{equation}
\E\left[\frac{1}{T}\sum_{t=1}^{T}\left\Vert \nabla f(\bx_{t})\right\Vert _{2}^{2}\right]\leq\O\left(\frac{A^{2}}{T}+\frac{B\left\Vert \bsig\right\Vert _{\p}}{T^{\frac{\p-1}{\p}}}\right),\label{eq:main-AdaGradNorm-ub-bounded-5}
\end{equation}
where $A=\sqrt{\frac{\lambda\Delta_{\star}}{\gamma}}+\frac{\Delta_{\star}}{\gamma}+\gamma\left\Vert \bL\right\Vert _{\infty}$
and $B=\frac{\Delta_{\star}}{\gamma}+\gamma\left\Vert \bL\right\Vert _{\infty}$
are defined in the statement of Theorem \ref{thm:main-AdaGradNorm-ub-bounded}.

Finally, we can apply the following inequality to recover Theorem
\ref{thm:main-AdaGradNorm-ub-bounded}, 
\begin{align*}
 & \E\left[\frac{1}{T}\sum_{t=1}^{T}\left\Vert \nabla f(\bx_{t})\right\Vert _{2}\right]\leq\E\left[\sqrt{\frac{1}{T}\sum_{t=1}^{T}\left\Vert \nabla f(\bx_{t})\right\Vert _{2}^{2}}\right]\\
\leq & \sqrt{\E\left[\frac{1}{T}\sum_{t=1}^{T}\left\Vert \nabla f(\bx_{t})\right\Vert _{2}^{2}\right]}\overset{(\ref{eq:main-AdaGradNorm-ub-bounded-5})}{\leq}\O\left(\frac{A}{\sqrt{T}}+\frac{\sqrt{B\left\Vert \bsig\right\Vert _{\p}}}{T^{\frac{\p-1}{2\p}}}\right),
\end{align*}
where the first step holds by the concavity of $\sqrt{\sx}$ and the
second step is due to H\"{o}lder's inequality.
\end{proof}

The above proof relies on the following lemma, which shows that the
term $\E\left[\sqrt{v_{t}}\right]$ can be upper bounded by $\left\Vert \bsig\right\Vert _{\p}t^{\frac{1}{\p}}$
and $\E\left[\sqrt{u_{t}}\right]$ for any $\p\in\left[1,2\right]$.
Essentially the same observation was also made in \citet{pmlr-v313-liu26a}.
\begin{lem}
\label{lem:main-AdaGradNorm-v}Under Assumption \ref{assu:heavy},
for any $t\in\N$, $\AdaGradNorm$ (Algorithm \ref{alg:AdaGradNorm})
guarantees
\[
\E\left[\sqrt{v_{t}}\right]\leq\sqrt{2}\left\Vert \bsig\right\Vert _{\p}t^{\frac{1}{\p}}+\E\left[\sqrt{2u_{t}}\right],
\]
where $u_{t}\triangleq\sum_{s=1}^{t}\left\Vert \nabla f(\bx_{s})\right\Vert _{2}^{2},\forall t\in$$\N$.
\end{lem}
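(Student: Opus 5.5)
We split each stochastic gradient into its mean and its noise, $\bg_{s}=\nabla f(\bx_{s})+\bxi_{s}$, and bound the two pieces of $v_{t}$ separately. Since $\left\Vert \bg_{s}\right\Vert _{2}^{2}\leq2\left\Vert \nabla f(\bx_{s})\right\Vert _{2}^{2}+2\left\Vert \bxi_{s}\right\Vert _{2}^{2}$, we get $v_{t}\leq2u_{t}+2\sum_{s=1}^{t}\left\Vert \bxi_{s}\right\Vert _{2}^{2}$. Using $\sqrt{a+b}\leq\sqrt{a}+\sqrt{b}$ this gives
\[
\sqrt{v_{t}}\leq\sqrt{2u_{t}}+\sqrt{2}\sqrt{\sum_{s=1}^{t}\sum_{i=1}^{d}\bxi_{s,i}^{2}}.
\]
After taking expectations, the only thing left to show is $\E\bigl[\sqrt{\sum_{s\leq t}\sum_{i}\bxi_{s,i}^{2}}\bigr]\leq\left\Vert \bsig\right\Vert _{\p}t^{\frac{1}{\p}}$.

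For this we use the monotonicity of $\ell_{q}$ norms. Because $\p\leq2$, for any nonnegative numbers $a_{j}$ we have $\bigl(\sum_{j}a_{j}^{2}\bigr)^{1/2}\leq\bigl(\sum_{j}a_{j}^{\p}\bigr)^{1/\p}$. We apply this to the whole collection $\{|\bxi_{s,i}|\}_{s\leq t,i\leq d}$, which gives
\[
\sqrt{\sum_{s,i}\bxi_{s,i}^{2}}\leq\Bigl(\sum_{s,i}|\bxi_{s,i}|^{\p}\Bigr)^{1/\p}.
\]
The map $x\mapsto x^{1/\p}$ is concave, so Jensen's inequality moves the expectation inside:
\[
\E\Bigl[\Bigl(\sum_{s,i}|\bxi_{s,i}|^{\p}\Bigr)^{1/\p}\Bigr]\leq\Bigl(\sum_{s,i}\E\left[|\bxi_{s,i}|^{\p}\right]\Bigr)^{1/\p}.
\]

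Finally, the tower property together with Assumption \ref{assu:heavy} gives $\E[|\bxi_{s,i}|^{\p}]=\E[\E_{s-1}[|\bxi_{s,i}|^{\p}]]\leq\bsig_{i}^{\p}$. Summing over $s\leq t$ and $i\in[d]$ yields $t\left\Vert \bsig\right\Vert _{\p}^{\p}$, and taking the $1/\p$ power gives $\left\Vert \bsig\right\Vert _{\p}t^{\frac{1}{\p}}$, which completes the bound.

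No step here is a real obstacle. The one thing that needs care is to compare the $\ell_{2}$ and $\ell_{\p}$ norms jointly over time and coordinates, before taking any expectation. Splitting $\sqrt{\cdot}$ coordinate-wise or time-wise first would lose factors. The argument also never needs a martingale property of the noise, so unbiasedness (Assumption \ref{assu:unbias}) is not used, consistent with the lemma assuming only Assumption \ref{assu:heavy}.
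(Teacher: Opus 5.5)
Your proof is correct and follows the paper's argument essentially step for step. The paper uses the same bound $v_t \le 2u_t + 2\sum_{s\le t}\|\bxi_s\|_2^2$ and the same $\ell_2\le\ell_{\p}$ comparison, which it applies twice in nested form (first within each time step, then across time); this is equivalent to your single joint application over $(s,i)$. It then finishes with H\"older/Jensen for $x\mapsto x^{1/\p}$ and Assumption \ref{assu:heavy}, as you do.
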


\begin{proof}
By the definition of $v_{t}$, we have
\begin{align*}
\sqrt{v_{t}} & =\sqrt{\sum_{s=1}^{t}\left\Vert \bg_{s}\right\Vert _{2}^{2}}=\sqrt{\sum_{s=1}^{t}\left\Vert \bxi_{s}+\nabla f(\bx_{s})\right\Vert _{2}^{2}}\\
 & \leq\sqrt{2\sum_{s=1}^{t}\left\Vert \bxi_{s}\right\Vert _{2}^{2}+2\sum_{s=1}^{t}\left\Vert \nabla f(\bx_{s})\right\Vert _{2}^{2}}\\
 & \leq\sqrt{2\sum_{s=1}^{t}\left\Vert \bxi_{s}\right\Vert _{2}^{2}}+\sqrt{2u_{t}}\\
 & \leq\sqrt{2}\left(\sum_{s=1}^{t}\sum_{i=1}^{d}\left|\bxi_{s,i}\right|^{\p}\right)^{\frac{1}{\p}}+\sqrt{2u_{t}},
\end{align*}
where the last step is by applying $\left\Vert \cdot\right\Vert _{2}\leq\left\Vert \cdot\right\Vert _{\p}$
twice when $\p\in\left[1,2\right]$, i.e.,
\[
\sqrt{\sum_{s=1}^{t}\left\Vert \bxi_{s}\right\Vert _{2}^{2}}\leq\sqrt{\sum_{s=1}^{t}\left\Vert \bxi_{s}\right\Vert _{\p}^{2}}\leq\left(\sum_{s=1}^{t}\left\Vert \bxi_{s}\right\Vert _{\p}^{\p}\right)^{\frac{1}{\p}}.
\]
Finally, by H\"{o}lder's inequality, we conclude that
\begin{align*}
\E\left[\sqrt{v_{t}}\right] & \leq\sqrt{2}\left(\E\left[\sum_{s=1}^{t}\sum_{i=1}^{d}\left|\bxi_{s,i}\right|^{\p}\right]\right)^{\frac{1}{\p}}+\E\left[\sqrt{2u_{t}}\right]\\
 & \leq\sqrt{2}\left\Vert \bsig\right\Vert _{\p}t^{\frac{1}{\p}}+\E\left[\sqrt{2u_{t}}\right],
\end{align*}
where the last step is due to $\E\left[\left|\bxi_{s,i}\right|^{\p}\right]\leq\bsig_{i}^{\p},\forall s\in\left[t\right],i\in\left[d\right]$
by Assumption \ref{assu:heavy}. 
\end{proof}

\section{Conclusion, Limitations, and Future Work\label{sec:conclusion}}

\paragraph{Conclusion.}

This work makes the first attempt to understand whether $\AdaGrad$,
the origin of adaptive gradient methods, can converge in non-convex
optimization under heavy-tailed noise. We partially address this question
by establishing the first convergence rate for $\AdaGrad$ when the
tail index $\p$ lies in the range $\left(\nicefrac{4}{3},2\right]$.
Importantly, the obtained rate adapts to the tail index and the noise
level simultaneously. Moreover, we derive an algorithm-dependent lower
bound for $\AdaGrad$ in the same setting when $d=1$, suggesting
that the existing minimax rate for heavy-tailed non-convex optimization
is not attainable by $\mathtt{AdaGrad}$. In addition, we show that
$\AdaGradNorm$, a popular variant of $\AdaGrad$ in theoretical studies,
can achieve a faster rate for all $\p\in\left(1,2\right]$ when the
objective function is bounded. We believe these results shed new light
on the empirical success of adaptive gradient methods.

\paragraph{Limitations.}

This study has two main limitations. First, the derived upper bound
for $\AdaGrad$ becomes vacuous when $\p\in\left(1,\nicefrac{4}{3}\right]$.
Second, the algorithm-dependent lower bound does not significantly
improve upon the minimax rate in terms of $\epsilon$. Determining
whether these upper and lower bounds are both loose, or whether one
of them is in fact tight, is an important direction for future research.

\paragraph{Future Work.}

Several promising directions remain open for future investigation.
For example, it is worthwhile to study whether more widely implemented
adaptive gradient methods (e.g., $\Adam$ and $\AdamW$) can converge
under heavy-tailed noise and to characterize their limitations, thereby
helping to demystify their strong performance in practice. Another
direction is to analyze the convergence behavior of adaptive gradient
methods under both heavy-tailed noise and other more realistic assumptions,
such as the generalized smoothness condition proposed by \citet{Zhang2020Why}.

\clearpage

\section*{Acknowledgements}

The author thanks the anonymous reviewers for their valuable feedback.

\section*{Impact Statement}

This paper presents work whose goal is to advance the field of machine
learning. There are many potential societal consequences of our work,
none of which we feel must be specifically highlighted here.

\bibliographystyle{icml2026}
\bibliography{ref}

\begin{thebibliography}{55}
\providecommand{\natexlab}[1]{#1}
\providecommand{\url}[1]{\texttt{#1}}
\expandafter\ifx\csname urlstyle\endcsname\relax
  \providecommand{\doi}[1]{doi: #1}\else
  \providecommand{\doi}{doi: \begingroup \urlstyle{rm}\Url}\fi

\bibitem[Ahn et~al.(2024)Ahn, Cheng, Song, Yun, Jadbabaie, and Sra]{ahn2024linear}
Ahn, K., Cheng, X., Song, M., Yun, C., Jadbabaie, A., and Sra, S.
\newblock Linear attention is (maybe) all you need (to understand transformer optimization).
\newblock In \emph{The Twelfth International Conference on Learning Representations}, 2024.
\newblock URL \url{https://openreview.net/forum?id=0uI5415ry7}.

\bibitem[Arjevani et~al.(2023)Arjevani, Carmon, Duchi, Foster, Srebro, and Woodworth]{arjevani2023lower}
Arjevani, Y., Carmon, Y., Duchi, J.~C., Foster, D.~J., Srebro, N., and Woodworth, B.
\newblock Lower bounds for non-convex stochastic optimization.
\newblock \emph{Mathematical Programming}, 199\penalty0 (1-2):\penalty0 165--214, 2023.

\bibitem[Attia \& Koren(2023)Attia and Koren]{pmlr-v202-attia23a}
Attia, A. and Koren, T.
\newblock {SGD} with {A}da{G}rad stepsizes: Full adaptivity with high probability to unknown parameters, unbounded gradients and affine variance.
\newblock In Krause, A., Brunskill, E., Cho, K., Engelhardt, B., Sabato, S., and Scarlett, J. (eds.), \emph{Proceedings of the 40th International Conference on Machine Learning}, volume 202 of \emph{Proceedings of Machine Learning Research}, pp.\  1147--1171. PMLR, 23--29 Jul 2023.
\newblock URL \url{https://proceedings.mlr.press/v202/attia23a.html}.

\bibitem[Battash et~al.(2024)Battash, Wolf, and Lindenbaum]{pmlr-v238-battash24a}
Battash, B., Wolf, L., and Lindenbaum, O.
\newblock Revisiting the noise model of stochastic gradient descent.
\newblock In Dasgupta, S., Mandt, S., and Li, Y. (eds.), \emph{Proceedings of The 27th International Conference on Artificial Intelligence and Statistics}, volume 238 of \emph{Proceedings of Machine Learning Research}, pp.\  4780--4788. PMLR, 02--04 May 2024.
\newblock URL \url{https://proceedings.mlr.press/v238/battash24a.html}.

\bibitem[Bernstein et~al.(2018)Bernstein, Wang, Azizzadenesheli, and Anandkumar]{pmlr-v80-bernstein18a}
Bernstein, J., Wang, Y.-X., Azizzadenesheli, K., and Anandkumar, A.
\newblock sign{SGD}: Compressed optimisation for non-convex problems.
\newblock In Dy, J. and Krause, A. (eds.), \emph{Proceedings of the 35th International Conference on Machine Learning}, volume~80 of \emph{Proceedings of Machine Learning Research}, pp.\  560--569. PMLR, 10--15 Jul 2018.
\newblock URL \url{https://proceedings.mlr.press/v80/bernstein18a.html}.

\bibitem[Bernstein et~al.(2019)Bernstein, Zhao, Azizzadenesheli, and Anandkumar]{bernstein2018signsgd}
Bernstein, J., Zhao, J., Azizzadenesheli, K., and Anandkumar, A.
\newblock sign{SGD} with majority vote is communication efficient and fault tolerant.
\newblock In \emph{International Conference on Learning Representations}, 2019.
\newblock URL \url{https://openreview.net/forum?id=BJxhijAcY7}.

\bibitem[Bottou et~al.(2018)Bottou, Curtis, and Nocedal]{doi:10.1137/16M1080173}
Bottou, L., Curtis, F.~E., and Nocedal, J.
\newblock Optimization methods for large-scale machine learning.
\newblock \emph{SIAM Review}, 60\penalty0 (2):\penalty0 223--311, 2018.
\newblock \doi{10.1137/16M1080173}.
\newblock URL \url{https://doi.org/10.1137/16M1080173}.

\bibitem[Chezhegov et~al.(2025)Chezhegov, Yaroslav, Semenov, Beznosikov, Gasnikov, Horv\'{a}th, Tak\'{a}\v{c}, and Gorbunov]{pmlr-v267-chezhegov25a}
Chezhegov, S., Yaroslav, K., Semenov, A., Beznosikov, A., Gasnikov, A., Horv\'{a}th, S., Tak\'{a}\v{c}, M., and Gorbunov, E.
\newblock Clipping improves {A}dam-norm and {A}da{G}rad-norm when the noise is heavy-tailed.
\newblock In Singh, A., Fazel, M., Hsu, D., Lacoste-Julien, S., Berkenkamp, F., Maharaj, T., Wagstaff, K., and Zhu, J. (eds.), \emph{Proceedings of the 42nd International Conference on Machine Learning}, volume 267 of \emph{Proceedings of Machine Learning Research}, pp.\  10269--10333. PMLR, 13--19 Jul 2025.
\newblock URL \url{https://proceedings.mlr.press/v267/chezhegov25a.html}.

\bibitem[Crawshaw \& Liu(2025)Crawshaw and Liu]{crawshaw2025complexity}
Crawshaw, M. and Liu, M.
\newblock Complexity lower bounds of adaptive gradient algorithms for non-convex stochastic optimization under relaxed smoothness.
\newblock In \emph{The Thirteenth International Conference on Learning Representations}, 2025.
\newblock URL \url{https://openreview.net/forum?id=ZjOXuAfS6l}.

\bibitem[Cutkosky \& Mehta(2020)Cutkosky and Mehta]{pmlr-v119-cutkosky20b}
Cutkosky, A. and Mehta, H.
\newblock Momentum improves normalized {SGD}.
\newblock In III, H.~D. and Singh, A. (eds.), \emph{Proceedings of the 37th International Conference on Machine Learning}, volume 119 of \emph{Proceedings of Machine Learning Research}, pp.\  2260--2268. PMLR, 13--18 Jul 2020.
\newblock URL \url{https://proceedings.mlr.press/v119/cutkosky20b.html}.

\bibitem[Cutkosky \& Mehta(2021)Cutkosky and Mehta]{NEURIPS2021_26901deb}
Cutkosky, A. and Mehta, H.
\newblock High-probability bounds for non-convex stochastic optimization with heavy tails.
\newblock In Ranzato, M., Beygelzimer, A., Dauphin, Y., Liang, P., and Vaughan, J.~W. (eds.), \emph{Advances in Neural Information Processing Systems}, volume~34, pp.\  4883--4895. Curran Associates, Inc., 2021.
\newblock URL \url{https://proceedings.neurips.cc/paper_files/paper/2021/file/26901debb30ea03f0aa833c9de6b81e9-Paper.pdf}.

\bibitem[De et~al.(2018)De, Mukherjee, and Ullah]{de2018convergence}
De, S., Mukherjee, A., and Ullah, E.
\newblock Convergence guarantees for rmsprop and adam in non-convex optimization and an empirical comparison to nesterov acceleration.
\newblock \emph{arXiv preprint arXiv:1807.06766}, 2018.

\bibitem[D{\'e}fossez et~al.(2022)D{\'e}fossez, Bottou, Bach, and Usunier]{defossez2022a}
D{\'e}fossez, A., Bottou, L., Bach, F., and Usunier, N.
\newblock A simple convergence proof of adam and adagrad.
\newblock \emph{Transactions on Machine Learning Research}, 2022.
\newblock ISSN 2835-8856.
\newblock URL \url{https://openreview.net/forum?id=ZPQhzTSWA7}.

\bibitem[Duchi et~al.(2011)Duchi, Hazan, and Singer]{JMLR:v12:duchi11a}
Duchi, J., Hazan, E., and Singer, Y.
\newblock Adaptive subgradient methods for online learning and stochastic optimization.
\newblock \emph{Journal of Machine Learning Research}, 12\penalty0 (61):\penalty0 2121--2159, 2011.
\newblock URL \url{http://jmlr.org/papers/v12/duchi11a.html}.

\bibitem[Faw et~al.(2022)Faw, Tziotis, Caramanis, Mokhtari, Shakkottai, and Ward]{pmlr-v178-faw22a}
Faw, M., Tziotis, I., Caramanis, C., Mokhtari, A., Shakkottai, S., and Ward, R.
\newblock The power of adaptivity in sgd: Self-tuning step sizes with unbounded gradients and affine variance.
\newblock In Loh, P.-L. and Raginsky, M. (eds.), \emph{Proceedings of Thirty Fifth Conference on Learning Theory}, volume 178 of \emph{Proceedings of Machine Learning Research}, pp.\  313--355. PMLR, 02--05 Jul 2022.
\newblock URL \url{https://proceedings.mlr.press/v178/faw22a.html}.

\bibitem[Faw et~al.(2023)Faw, Rout, Caramanis, and Shakkottai]{pmlr-v195-faw23a}
Faw, M., Rout, L., Caramanis, C., and Shakkottai, S.
\newblock Beyond uniform smoothness: A stopped analysis of adaptive sgd.
\newblock In Neu, G. and Rosasco, L. (eds.), \emph{Proceedings of Thirty Sixth Conference on Learning Theory}, volume 195 of \emph{Proceedings of Machine Learning Research}, pp.\  89--160. PMLR, 12--15 Jul 2023.
\newblock URL \url{https://proceedings.mlr.press/v195/faw23a.html}.

\bibitem[Garg et~al.(2021)Garg, Zhanson, Parisotto, Prasad, Kolter, Lipton, Balakrishnan, Salakhutdinov, and Ravikumar]{pmlr-v139-garg21b}
Garg, S., Zhanson, J., Parisotto, E., Prasad, A., Kolter, Z., Lipton, Z., Balakrishnan, S., Salakhutdinov, R., and Ravikumar, P.
\newblock On proximal policy optimization's heavy-tailed gradients.
\newblock In Meila, M. and Zhang, T. (eds.), \emph{Proceedings of the 38th International Conference on Machine Learning}, volume 139 of \emph{Proceedings of Machine Learning Research}, pp.\  3610--3619. PMLR, 18--24 Jul 2021.
\newblock URL \url{https://proceedings.mlr.press/v139/garg21b.html}.

\bibitem[Hong \& Lin(2024)Hong and Lin]{NEURIPS2024_14bb27f6}
Hong, Y. and Lin, J.
\newblock On convergence of adam for stochastic optimization under relaxed assumptions.
\newblock In Globerson, A., Mackey, L., Belgrave, D., Fan, A., Paquet, U., Tomczak, J., and Zhang, C. (eds.), \emph{Advances in Neural Information Processing Systems}, volume~37, pp.\  10827--10877. Curran Associates, Inc., 2024.
\newblock \doi{10.52202/079017-0346}.
\newblock URL \url{https://proceedings.neurips.cc/paper_files/paper/2024/file/14bb27f680bee45d83bc769738e7f9b5-Paper-Conference.pdf}.

\bibitem[H{\"u}bler et~al.(2025)H{\"u}bler, Fatkhullin, and He]{pmlr-v258-hubler25a}
H{\"u}bler, F., Fatkhullin, I., and He, N.
\newblock From gradient clipping to normalization for heavy tailed sgd.
\newblock In Li, Y., Mandt, S., Agrawal, S., and Khan, E. (eds.), \emph{Proceedings of The 28th International Conference on Artificial Intelligence and Statistics}, volume 258 of \emph{Proceedings of Machine Learning Research}, pp.\  2413--2421. PMLR, 03--05 May 2025.
\newblock URL \url{https://proceedings.mlr.press/v258/hubler25a.html}.

\bibitem[Jiang et~al.(2025)Jiang, Maladkar, and Mokhtari]{pmlr-v291-jiang25c}
Jiang, R., Maladkar, D., and Mokhtari, A.
\newblock Provable complexity improvement of adagrad over sgd: Upper and lower bounds in stochastic non-convex optimization.
\newblock In Haghtalab, N. and Moitra, A. (eds.), \emph{Proceedings of Thirty Eighth Conference on Learning Theory}, volume 291 of \emph{Proceedings of Machine Learning Research}, pp.\  3124--3158. PMLR, 30 Jun--04 Jul 2025.
\newblock URL \url{https://proceedings.mlr.press/v291/jiang25c.html}.

\bibitem[Kavis et~al.(2022)Kavis, Levy, and Cevher]{kavis2022high}
Kavis, A., Levy, K.~Y., and Cevher, V.
\newblock High probability bounds for a class of nonconvex algorithms with adagrad stepsize.
\newblock In \emph{International Conference on Learning Representations}, 2022.
\newblock URL \url{https://openreview.net/forum?id=dSw0QtRMJkO}.

\bibitem[Kingma \& Ba(2014)Kingma and Ba]{kingma2014adam}
Kingma, D.~P. and Ba, J.
\newblock Adam: A method for stochastic optimization.
\newblock \emph{arXiv preprint arXiv:1412.6980}, 2014.

\bibitem[Lan(2020)]{lan2020first}
Lan, G.
\newblock \emph{First-order and stochastic optimization methods for machine learning}.
\newblock Springer, 2020.

\bibitem[Levy et~al.(2021)Levy, Kavis, and Cevher]{NEURIPS2021_ac10ff19}
Levy, K., Kavis, A., and Cevher, V.
\newblock Storm+: Fully adaptive sgd with recursive momentum for nonconvex optimization.
\newblock In Ranzato, M., Beygelzimer, A., Dauphin, Y., Liang, P., and Vaughan, J.~W. (eds.), \emph{Advances in Neural Information Processing Systems}, volume~34, pp.\  20571--20582. Curran Associates, Inc., 2021.
\newblock URL \url{https://proceedings.neurips.cc/paper_files/paper/2021/file/ac10ff1941c540cd87c107330996f4f6-Paper.pdf}.

\bibitem[Li et~al.(2025)Li, Dong, and Lin]{JMLR:v26:24-0523}
Li, H., Dong, Y., and Lin, Z.
\newblock On the $o(\sqrt{d}/t^{1/4})$ convergence rate of rmsprop and its momentum extension measured by $\ell_1$ norm.
\newblock \emph{Journal of Machine Learning Research}, 26\penalty0 (131):\penalty0 1--25, 2025.
\newblock URL \url{http://jmlr.org/papers/v26/24-0523.html}.

\bibitem[Li \& Orabona(2019)Li and Orabona]{pmlr-v89-li19c}
Li, X. and Orabona, F.
\newblock On the convergence of stochastic gradient descent with adaptive stepsizes.
\newblock In Chaudhuri, K. and Sugiyama, M. (eds.), \emph{Proceedings of the Twenty-Second International Conference on Artificial Intelligence and Statistics}, volume~89 of \emph{Proceedings of Machine Learning Research}, pp.\  983--992. PMLR, 16--18 Apr 2019.
\newblock URL \url{https://proceedings.mlr.press/v89/li19c.html}.

\bibitem[Liu et~al.(2025)Liu, Pan, and Zhang]{liu2025adagrad}
Liu, Y., Pan, R., and Zhang, T.
\newblock Adagrad under anisotropic smoothness.
\newblock In \emph{The Thirteenth International Conference on Learning Representations}, 2025.
\newblock URL \url{https://openreview.net/forum?id=4GT9uTsAJE}.

\bibitem[Liu(2026)]{pmlr-v313-liu26a}
Liu, Z.
\newblock Online convex optimization with heavy tails: Old algorithms, new regrets, and applications.
\newblock In Telgarsky, M. and Ullman, J. (eds.), \emph{Proceedings of The 37th International Conference on Algorithmic Learning Theory}, volume 313 of \emph{Proceedings of Machine Learning Research}, pp.\  1--47. PMLR, 23--26 Feb 2026.
\newblock URL \url{https://proceedings.mlr.press/v313/liu26a.html}.

\bibitem[Liu \& Zhou(2025)Liu and Zhou]{liu2025nonconvex}
Liu, Z. and Zhou, Z.
\newblock Nonconvex stochastic optimization under heavy-tailed noises: Optimal convergence without gradient clipping.
\newblock In \emph{The Thirteenth International Conference on Learning Representations}, 2025.
\newblock URL \url{https://openreview.net/forum?id=NKotdPUc3L}.

\bibitem[Liu et~al.(2023{\natexlab{a}})Liu, Nguyen, Ene, and Nguyen]{liu2023on}
Liu, Z., Nguyen, T.~D., Ene, A., and Nguyen, H.
\newblock On the convergence of adagrad(norm) on {$\mathbb{R}^{d}$}: Beyond convexity, non-asymptotic rate and acceleration.
\newblock In \emph{The Eleventh International Conference on Learning Representations}, 2023{\natexlab{a}}.
\newblock URL \url{https://openreview.net/forum?id=ULnHxczCBaE}.

\bibitem[Liu et~al.(2023{\natexlab{b}})Liu, Nguyen, Nguyen, Ene, and Nguyen]{pmlr-v202-liu23aa}
Liu, Z., Nguyen, T.~D., Nguyen, T.~H., Ene, A., and Nguyen, H.
\newblock High probability convergence of stochastic gradient methods.
\newblock In Krause, A., Brunskill, E., Cho, K., Engelhardt, B., Sabato, S., and Scarlett, J. (eds.), \emph{Proceedings of the 40th International Conference on Machine Learning}, volume 202 of \emph{Proceedings of Machine Learning Research}, pp.\  21884--21914. PMLR, 23--29 Jul 2023{\natexlab{b}}.
\newblock URL \url{https://proceedings.mlr.press/v202/liu23aa.html}.

\bibitem[Liu et~al.(2023{\natexlab{c}})Liu, Zhang, and Zhou]{pmlr-v195-liu23c}
Liu, Z., Zhang, J., and Zhou, Z.
\newblock Breaking the lower bound with (little) structure: Acceleration in non-convex stochastic optimization with heavy-tailed noise.
\newblock In Neu, G. and Rosasco, L. (eds.), \emph{Proceedings of Thirty Sixth Conference on Learning Theory}, volume 195 of \emph{Proceedings of Machine Learning Research}, pp.\  2266--2290. PMLR, 12--15 Jul 2023{\natexlab{c}}.
\newblock URL \url{https://proceedings.mlr.press/v195/liu23c.html}.

\bibitem[Loshchilov \& Hutter(2019)Loshchilov and Hutter]{loshchilov2018decoupled}
Loshchilov, I. and Hutter, F.
\newblock Decoupled weight decay regularization.
\newblock In \emph{International Conference on Learning Representations}, 2019.
\newblock URL \url{https://openreview.net/forum?id=Bkg6RiCqY7}.

\bibitem[McMahan \& Streeter(2010)McMahan and Streeter]{McMahanS10}
McMahan, H.~B. and Streeter, M.~J.
\newblock Adaptive bound optimization for online convex optimization.
\newblock In \emph{Conference on Learning Theory {(COLT)}}, pp.\  244--256. Omnipress, 2010.

\bibitem[Nesterov(1984)]{nesterov1984minimization}
Nesterov, Y.~E.
\newblock Minimization methods for nonsmooth convex and quasiconvex functions.
\newblock \emph{Matekon}, 29\penalty0 (3):\penalty0 519--531, 1984.

\bibitem[Nguyen et~al.(2023)Nguyen, Nguyen, Ene, and Nguyen]{NEURIPS2023_4c454d34}
Nguyen, T.~D., Nguyen, T.~H., Ene, A., and Nguyen, H.
\newblock Improved convergence in high probability of clipped gradient methods with heavy tailed noise.
\newblock In Oh, A., Naumann, T., Globerson, A., Saenko, K., Hardt, M., and Levine, S. (eds.), \emph{Advances in Neural Information Processing Systems}, volume~36, pp.\  24191--24222. Curran Associates, Inc., 2023.
\newblock URL \url{https://proceedings.neurips.cc/paper_files/paper/2023/file/4c454d34f3a4c8d6b4ca85a918e5d7ba-Paper-Conference.pdf}.

\bibitem[Reddi et~al.(2018)Reddi, Kale, and Kumar]{j.2018on}
Reddi, S.~J., Kale, S., and Kumar, S.
\newblock On the convergence of adam and beyond.
\newblock In \emph{International Conference on Learning Representations}, 2018.
\newblock URL \url{https://openreview.net/forum?id=ryQu7f-RZ}.

\bibitem[Sadiev et~al.(2023)Sadiev, Danilova, Gorbunov, Horv\'{a}th, Gidel, Dvurechensky, Gasnikov, and Richt\'{a}rik]{pmlr-v202-sadiev23a}
Sadiev, A., Danilova, M., Gorbunov, E., Horv\'{a}th, S., Gidel, G., Dvurechensky, P., Gasnikov, A., and Richt\'{a}rik, P.
\newblock High-probability bounds for stochastic optimization and variational inequalities: the case of unbounded variance.
\newblock In Krause, A., Brunskill, E., Cho, K., Engelhardt, B., Sabato, S., and Scarlett, J. (eds.), \emph{Proceedings of the 40th International Conference on Machine Learning}, volume 202 of \emph{Proceedings of Machine Learning Research}, pp.\  29563--29648. PMLR, 23--29 Jul 2023.
\newblock URL \url{https://proceedings.mlr.press/v202/sadiev23a.html}.

\bibitem[Shazeer \& Stern(2018)Shazeer and Stern]{pmlr-v80-shazeer18a}
Shazeer, N. and Stern, M.
\newblock Adafactor: Adaptive learning rates with sublinear memory cost.
\newblock In Dy, J. and Krause, A. (eds.), \emph{Proceedings of the 35th International Conference on Machine Learning}, volume~80 of \emph{Proceedings of Machine Learning Research}, pp.\  4596--4604. PMLR, 10--15 Jul 2018.
\newblock URL \url{https://proceedings.mlr.press/v80/shazeer18a.html}.

\bibitem[Shi et~al.(2021)Shi, Li, Hong, and Sun]{shi2021rmsprop}
Shi, N., Li, D., Hong, M., and Sun, R.
\newblock {RMS}prop converges with proper hyper-parameter.
\newblock In \emph{International Conference on Learning Representations}, 2021.
\newblock URL \url{https://openreview.net/forum?id=3UDSdyIcBDA}.

\bibitem[Simsekli et~al.(2019)Simsekli, Sagun, and Gurbuzbalaban]{pmlr-v97-simsekli19a}
Simsekli, U., Sagun, L., and Gurbuzbalaban, M.
\newblock A tail-index analysis of stochastic gradient noise in deep neural networks.
\newblock In Chaudhuri, K. and Salakhutdinov, R. (eds.), \emph{Proceedings of the 36th International Conference on Machine Learning}, volume~97 of \emph{Proceedings of Machine Learning Research}, pp.\  5827--5837. PMLR, 09--15 Jun 2019.
\newblock URL \url{https://proceedings.mlr.press/v97/simsekli19a.html}.

\bibitem[Sun et~al.(2025)Sun, Liu, and Yuan]{JMLR:v26:24-1991}
Sun, T., Liu, X., and Yuan, K.
\newblock Revisiting gradient normalization and clipping for nonconvex sgd under heavy-tailed noise: Necessity, sufficiency, and acceleration.
\newblock \emph{Journal of Machine Learning Research}, 26\penalty0 (237):\penalty0 1--42, 2025.
\newblock URL \url{http://jmlr.org/papers/v26/24-1991.html}.

\bibitem[Tieleman et~al.(2012)Tieleman, Hinton, et~al.]{tieleman2012lecture}
Tieleman, T., Hinton, G., et~al.
\newblock Lecture 6.5-rmsprop: Divide the gradient by a running average of its recent magnitude.
\newblock \emph{COURSERA: Neural networks for machine learning}, 4\penalty0 (2):\penalty0 26--31, 2012.

\bibitem[Vaswani et~al.(2017)Vaswani, Shazeer, Parmar, Uszkoreit, Jones, Gomez, Kaiser, and Polosukhin]{NIPS2017_3f5ee243}
Vaswani, A., Shazeer, N., Parmar, N., Uszkoreit, J., Jones, L., Gomez, A.~N., Kaiser, L.~u., and Polosukhin, I.
\newblock Attention is all you need.
\newblock In Guyon, I., Luxburg, U.~V., Bengio, S., Wallach, H., Fergus, R., Vishwanathan, S., and Garnett, R. (eds.), \emph{Advances in Neural Information Processing Systems}, volume~30. Curran Associates, Inc., 2017.
\newblock URL \url{https://proceedings.neurips.cc/paper_files/paper/2017/file/3f5ee243547dee91fbd053c1c4a845aa-Paper.pdf}.

\bibitem[Wang et~al.(2023{\natexlab{a}})Wang, Fu, Zhang, Zheng, and Chen]{NEURIPS2023_7ac19fdc}
Wang, B., Fu, J., Zhang, H., Zheng, N., and Chen, W.
\newblock Closing the gap between the upper bound and lower bound of adam\textquotesingle s iteration complexity.
\newblock In Oh, A., Naumann, T., Globerson, A., Saenko, K., Hardt, M., and Levine, S. (eds.), \emph{Advances in Neural Information Processing Systems}, volume~36, pp.\  39006--39032. Curran Associates, Inc., 2023{\natexlab{a}}.
\newblock URL \url{https://proceedings.neurips.cc/paper_files/paper/2023/file/7ac19fdcdf4f311f3e3ef2e7ef4784d7-Paper-Conference.pdf}.

\bibitem[Wang et~al.(2023{\natexlab{b}})Wang, Zhang, Ma, and Chen]{pmlr-v195-wang23a}
Wang, B., Zhang, H., Ma, Z., and Chen, W.
\newblock Convergence of adagrad for non-convex objectives: Simple proofs and relaxed assumptions.
\newblock In Neu, G. and Rosasco, L. (eds.), \emph{Proceedings of Thirty Sixth Conference on Learning Theory}, volume 195 of \emph{Proceedings of Machine Learning Research}, pp.\  161--190. PMLR, 12--15 Jul 2023{\natexlab{b}}.
\newblock URL \url{https://proceedings.mlr.press/v195/wang23a.html}.

\bibitem[Wang et~al.(2024)Wang, Zhang, Zhang, Meng, Sun, Ma, Liu, Luo, and Chen]{10.1145/3637528.3671718}
Wang, B., Zhang, Y., Zhang, H., Meng, Q., Sun, R., Ma, Z.-M., Liu, T.-Y., Luo, Z.-Q., and Chen, W.
\newblock Provable adaptivity of adam under non-uniform smoothness.
\newblock In \emph{Proceedings of the 30th ACM SIGKDD Conference on Knowledge Discovery and Data Mining}, KDD '24, pp.\  2960--2969, New York, NY, USA, 2024. Association for Computing Machinery.
\newblock ISBN 9798400704901.
\newblock \doi{10.1145/3637528.3671718}.
\newblock URL \url{https://doi.org/10.1145/3637528.3671718}.

\bibitem[Ward et~al.(2019)Ward, Wu, and Bottou]{pmlr-v97-ward19a}
Ward, R., Wu, X., and Bottou, L.
\newblock {A}da{G}rad stepsizes: Sharp convergence over nonconvex landscapes.
\newblock In Chaudhuri, K. and Salakhutdinov, R. (eds.), \emph{Proceedings of the 36th International Conference on Machine Learning}, volume~97 of \emph{Proceedings of Machine Learning Research}, pp.\  6677--6686. PMLR, 09--15 Jun 2019.
\newblock URL \url{https://proceedings.mlr.press/v97/ward19a.html}.

\bibitem[YANG et~al.(2023)YANG, Li, Fatkhullin, and He]{NEURIPS2023_eb1a323f}
YANG, J., Li, X., Fatkhullin, I., and He, N.
\newblock Two sides of one coin: the limits of untuned sgd and the power of adaptive methods.
\newblock In Oh, A., Naumann, T., Globerson, A., Saenko, K., Hardt, M., and Levine, S. (eds.), \emph{Advances in Neural Information Processing Systems}, volume~36, pp.\  74257--74288. Curran Associates, Inc., 2023.
\newblock URL \url{https://proceedings.neurips.cc/paper_files/paper/2023/file/eb1a323fa10d4102ff13422476a744ff-Paper-Conference.pdf}.

\bibitem[Zaheer et~al.(2018)Zaheer, Reddi, Sachan, Kale, and Kumar]{NEURIPS2018_90365351}
Zaheer, M., Reddi, S., Sachan, D., Kale, S., and Kumar, S.
\newblock Adaptive methods for nonconvex optimization.
\newblock In Bengio, S., Wallach, H., Larochelle, H., Grauman, K., Cesa-Bianchi, N., and Garnett, R. (eds.), \emph{Advances in Neural Information Processing Systems}, volume~31. Curran Associates, Inc., 2018.
\newblock URL \url{https://proceedings.neurips.cc/paper_files/paper/2018/file/90365351ccc7437a1309dc64e4db32a3-Paper.pdf}.

\bibitem[Zhang et~al.(2020{\natexlab{a}})Zhang, He, Sra, and Jadbabaie]{Zhang2020Why}
Zhang, J., He, T., Sra, S., and Jadbabaie, A.
\newblock Why gradient clipping accelerates training: A theoretical justification for adaptivity.
\newblock In \emph{International Conference on Learning Representations}, 2020{\natexlab{a}}.
\newblock URL \url{https://openreview.net/forum?id=BJgnXpVYwS}.

\bibitem[Zhang et~al.(2020{\natexlab{b}})Zhang, Karimireddy, Veit, Kim, Reddi, Kumar, and Sra]{NEURIPS2020_b05b57f6}
Zhang, J., Karimireddy, S.~P., Veit, A., Kim, S., Reddi, S., Kumar, S., and Sra, S.
\newblock Why are adaptive methods good for attention models?
\newblock In Larochelle, H., Ranzato, M., Hadsell, R., Balcan, M., and Lin, H. (eds.), \emph{Advances in Neural Information Processing Systems}, volume~33, pp.\  15383--15393. Curran Associates, Inc., 2020{\natexlab{b}}.
\newblock URL \url{https://proceedings.neurips.cc/paper_files/paper/2020/file/b05b57f6add810d3b7490866d74c0053-Paper.pdf}.

\bibitem[Zhang et~al.(2025)Zhang, Zhou, and Zou]{zhang2025convergence}
Zhang, Q., Zhou, Y., and Zou, S.
\newblock Convergence guarantees for {RMSP}rop and adam in generalized-smooth non-convex optimization with affine noise variance.
\newblock \emph{Transactions on Machine Learning Research}, 2025.
\newblock ISSN 2835-8856.
\newblock URL \url{https://openreview.net/forum?id=QIzRdjIWnS}.

\bibitem[Zhang et~al.(2022)Zhang, Chen, Shi, Sun, and Luo]{NEURIPS2022_b6260ae5}
Zhang, Y., Chen, C., Shi, N., Sun, R., and Luo, Z.-Q.
\newblock Adam can converge without any modification on update rules.
\newblock In Koyejo, S., Mohamed, S., Agarwal, A., Belgrave, D., Cho, K., and Oh, A. (eds.), \emph{Advances in Neural Information Processing Systems}, volume~35, pp.\  28386--28399. Curran Associates, Inc., 2022.
\newblock URL \url{https://proceedings.neurips.cc/paper_files/paper/2022/file/b6260ae5566442da053e5ab5d691067a-Paper-Conference.pdf}.

\bibitem[Zou et~al.(2019)Zou, Shen, Jie, Zhang, and Liu]{Zou_2019_CVPR}
Zou, F., Shen, L., Jie, Z., Zhang, W., and Liu, W.
\newblock A sufficient condition for convergences of adam and rmsprop.
\newblock In \emph{Proceedings of the IEEE/CVF Conference on Computer Vision and Pattern Recognition (CVPR)}, June 2019.

\end{thebibliography}

\clearpage

\appendix
\onecolumn

\section{Upper Bound for $\protect\AdaGrad$\label{sec:AdaGrad-ub}}

This section provides the full statement of Theorem \ref{thm:main-AdaGrad-ub}
and its proof.

\subsection{Full Theorem and Its Proof}
\begin{thm}[Full statement of Theorem \ref{thm:main-AdaGrad-ub}]
\label{thm:AdaGrad-ub}Under Assumptions \ref{assu:lb}, \ref{assu:smooth},
\ref{assu:unbias}, and \ref{assu:heavy}, let $\Delta\triangleq f(\bx_{1})-f_{\star}$,
then for any $\gamma>0$ and $\lambda>0$, $\AdaGrad$ (Algorithm
\ref{alg:AdaGrad}) guarantees
\begin{align*}
\E\left[\frac{1}{T}\sum_{t=1}^{T}\left\Vert \nabla f(\bx_{t})\right\Vert _{1}\right]\leq & \O\left(\frac{d\lambda+\frac{\Delta}{\gamma}+\gamma\left\Vert \bL\right\Vert _{1}\ln\res_{T}}{\sqrt{T}}+\frac{\left\Vert \bsig\right\Vert _{1}\ln^{\frac{1}{\cp}+\frac{1}{2}}\res_{T}}{T^{\frac{\p-1}{\p}}}\right.\\
 & \left.\quad+\frac{\sqrt{\left(\frac{\Delta}{\gamma}+\gamma\left\Vert \bL\right\Vert _{1}\ln\res_{T}\right)\left\Vert \bsig\right\Vert _{1}}}{T^{\frac{\p-1}{2\p}}}+\frac{\left\Vert \bsig\right\Vert _{1}\ln^{\frac{1}{2\cp}+\frac{1}{4}}\res_{T}}{T^{\frac{3\p-4}{4\p}}}\right),
\end{align*}
where $\res_{T}=1+\frac{\sqrt{2}\left\Vert \bsig\right\Vert _{\infty}T^{\frac{1}{\p}}+2\left\Vert \nabla f(\bx_{1})\right\Vert _{\infty}T^{\frac{1}{2}}+2\gamma\sqrt{\left\Vert \bL\right\Vert _{1}\left\Vert \bL\right\Vert _{\infty}}T^{\frac{3}{2}}}{\lambda}$
is introduced in Lemma \ref{lem:AdaGrad-ln-bound}.
\end{thm}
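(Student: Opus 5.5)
We will follow the proxy-stepsize route outlined in the paper, working coordinate by coordinate. Write $\eta_{t,i}\defeq\frac{\gamma}{\lambda+\sqrt{\bv_{t,i}}}$ and introduce the predictable proxy $\tilde\eta_{t,i}\defeq\frac{\gamma}{\lambda+\sqrt{\bw_{t,i}}}$ with $\bw_{t}\defeq\bv_{t-1}+(\nabla f(\bx_{t}))^{2}+\bc^{2}$. The free vector $\bc$ will be fixed at the end as $\bc_{i}=\bsig_{i}T^{\frac12-\frac1{\cp}}/D_{T,i}^{\frac12-\frac1{\cp}}$.

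\textbf{Descent step.} By Assumption \ref{assu:smooth},
\[
f(\bx_{t+1})\le f(\bx_t)-\sum_i\eta_{t,i}\nabla_i f(\bx_t)\bg_{t,i}+\tfrac12\sum_i\bL_i\eta_{t,i}^2\bg_{t,i}^2 .
\]
We split $\eta_{t,i}=\tilde\eta_{t,i}+(\eta_{t,i}-\tilde\eta_{t,i})$. Since $\tilde\eta_{t,i}$ is $\F_{t-1}$-measurable, Assumption \ref{assu:unbias} turns the first part into $-\E[\tilde\eta_{t,i}(\nabla_if(\bx_t))^2]$.

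\textbf{Error term.} The error $|\eta_{t,i}-\tilde\eta_{t,i}|\,|\nabla_if(\bx_t)|\,|\bg_{t,i}|$ is controlled by the standard identity
\[
|\eta_{t,i}-\tilde\eta_{t,i}|\le \eta_{t,i}\tilde\eta_{t,i}\frac{|\bg_{t,i}^2-(\nabla_if)^2-\bc_i^2|}{\gamma(\sqrt{\bv_{t,i}}+\sqrt{\bw_{t,i}})}.
\]
We then apply Young's inequality so that half of $\tilde\eta_{t,i}(\nabla_if)^2$ is absorbed. What remains is a term like $\frac{\bg_{t,i}^2}{\sqrt{\bw_{t,i}}}\eta_{t,i}\cdot\frac{(\bg_{t,i}-\ldots)^2}{\bc_i}$, which is handled in two ways:
\begin{enumerate}[label=(\roman*)]
\item the telescoping-log bound $\sum_t\frac{\bg_{t,i}^2}{\lambda^2+\bv_{t,i}}\le \ln(1+\bv_{T,i}/\lambda^2)$, which produces the $\ln\res_T$ factors via Lemma \ref{lem:AdaGrad-ln-bound} and Jensen;
\item for the heavy-tailed part, the bound $\E_{t-1}[\min(\bxi_{t,i}^2/\bc_i,|\bxi_{t,i}|)]\lesssim\bsig_i^{\p}\bc_i^{1-\p}$, combined with Hölder to trade the $\p$-th moment against the log sum.
\end{enumerate}
The smoothness term $\frac12\bL_i\eta_{t,i}^2\bg_{t,i}^2\le\frac{\gamma^2\bL_i}{2}\frac{\bg_{t,i}^2}{\lambda^2+\bv_{t,i}}$ sums to $\gamma\|\bL\|_1\ln\res_T$ after the same log bound. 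This leaves an inequality of the form
\[
\E\Bigl[\sum_{t,i}\tilde\eta_{t,i}(\nabla_if(\bx_t))^2\Bigr]\lesssim\Delta+\gamma^2\|\bL\|_1\ln\res_T+\gamma\sum_i\Bigl(\bc_i+\bsig_i^{\p}\bc_i^{1-\p}T\,D_{T,i}^{\ldots}\Bigr).
\]
Here the precise powers come from Hölder with exponents $\p$ and $\cp$ applied to the $D_{T,i}$ log terms.

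\textbf{From the weighted sum to the $\ell_1$ metric.} We lower bound $\tilde\eta_{t,i}\ge\gamma/(\lambda+\sqrt{\bw_{T,i}})$ and use Cauchy--Schwarz:
\[
\E\sum_t|\nabla_if|\le\sqrt{\E\sum_t\tilde\eta_{t,i}(\nabla_if)^2}\cdot\sqrt{\E(\lambda+\sqrt{\bw_{T,i}})/\gamma}.
\]
Next we bound $\E\sqrt{\bw_{T,i}}\le\E\sqrt{\bv_{T,i}}+\E\sqrt{\sum_t(\nabla_if)^2}+\bc_i$, and $\E\sqrt{\bv_{T,i}}\le\sqrt2\bsig_iT^{1/\p}+\sqrt2\E\sqrt{\sum_t(\nabla_if)^2}$ as in Lemma \ref{lem:main-AdaGradNorm-v}, coordinatewise. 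Since $\sqrt{\sum_t(\nabla_if)^2}\le\sum_t|\nabla_if|$, this yields a self-bounding inequality $X\lesssim\sqrt{(U/\gamma)(d\lambda+\|\bsig\|_1T^{1/\p}+\|\bc\|_1+X)}$ for $X=\E\sum_t\|\nabla f(\bx_t)\|_1$. Solving the quadratic gives $X\lesssim U/\gamma+\sqrt{(U/\gamma)(\ldots)}$.

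\textbf{Choice of $\bc$.} Plugging in the stated $\bc$ balances $\bc_i$ against $\bsig_i^\p\bc_i^{1-\p}T$ with the log factor $D_{T,i}$. This choice produces the $T^{-\frac{3\p-4}{4\p}}$ term as the geometric mean of $T^{-\frac{\p-1}{\p}}$ and the $\bc$-induced $T^{\frac12-\frac1{\cp}}$ contribution. We then bound $D_{T,i}\le 2\ln\res_T$ using Lemma \ref{lem:AdaGrad-ln-bound}, which requires controlling $\E\sqrt{\sum_t(\nabla_if)^2}$ crudely by smoothness and the per-step movement $\le\gamma$ per coordinate; that is where the $T^{3/2}$ inside $\res_T$ comes from.

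\textbf{Main obstacle.} The hardest step is the error-term analysis under only $\p$-th moments. The cross term $\bxi_{t,i}^2$ cannot be bounded in expectation, so the argument must be truncated through $\min(\cdot)$-type inequalities using $\bc_i$ and the self-normalization $\eta_{t,i}|\bg_{t,i}|\le\gamma$. It also has to be arranged so that the resulting log factors enter with exactly the powers $\frac1{\cp}+\frac12$. We would first try the bound $\frac{\bxi^2}{\sqrt{\bw}}\le|\bxi|^{\p}\bw^{(1-\p)/2}\le|\bxi|^{\p}\bc^{1-\p}$ together with Hölder across $t$.
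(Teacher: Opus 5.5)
You have the right overall structure: the descent step, the predictable proxy $\bw_t$, the log-sum bound and the final form of $\bc$. But there is a genuine gap in the step you flag as the main obstacle, and the mechanism you propose there cannot give the stated rate.

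\textbf{Why the proposed truncation fails.} Suppose Young's inequality is applied pointwise, so that $\bxi_{t,i}^2$ sits inside the conditional expectation, and you then truncate via $\min(\bxi_{t,i}^2/\bc_i,|\bxi_{t,i}|)\le|\bxi_{t,i}|^{\p}\bc_i^{1-\p}$.
\begin{itemize}
\item This consumes the entire $\p$-th moment. Since $|\bxi_{t,i}|^{\p}$ is merely integrable, nothing is left for Hölder to trade against $\E[\bg_{t,i}^2/(\lambda^2+\bv_{t,i})]$.
\item The noise therefore costs $\gamma\bsig_i^{\p}\bc_i^{1-\p}$ per step, i.e.\ $T\bsig_i^{\p}\bc_i^{1-\p}$ in total, exactly as in your displayed inequality.
\item Optimizing $\bc_iD_{T,i}+T\bsig_i^{\p}\bc_i^{1-\p}$ over $\bc_i$ gives at best order $\bsig_iT^{1/\p}$ (times $\gamma$). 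That is a factor $\sqrt{T}$ worse than needed, already at $\p=2$.
\item After the conversion to $\ell_1$ this leaves a rate of $T^{\frac{1}{\p}-\frac{1}{2}}$, i.e.\ no convergence.
\item With the $\bc_i\approx\bsig_iT^{\frac{1}{\p}-\frac{1}{2}}$ you actually state, $T\bsig_i^{\p}\bc_i^{1-\p}$ is of order $\bsig_iT^{\frac{1}{\p}+\frac{\p-1}{2}}$. So that $\bc$ does not balance the terms your argument produces.
\item The inequality $\bxi^2/\sqrt{\bw}\le|\bxi|^{\p}\bc^{1-\p}$ you plan to try is also false whenever $|\bxi_{t,i}|>\sqrt{\bw_{t,i}}$, which can happen because $\bw_t$ is predictable.
\end{itemize}

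\textbf{The missing idea: never form $\bxi^2$.}
\begin{itemize}
\item Use $|\bg_{t,i}|+|\nabla_if(\bx_t)|\le\sqrt{\bw_{t,i}}+\sqrt{\bv_{t,i}}$ and $\bc_i\le\sqrt{\bw_{t,i}}+\sqrt{\bv_{t,i}}$ to cancel that factor in the denominator. The $i$-th error term is then at most $\gamma\frac{|\nabla_if(\bx_t)|}{\lambda+\sqrt{\bw_{t,i}}}\E_{t-1}\bigl[(|\bxi_{t,i}|+\bc_i)Y_{t,i}\bigr]$, with $Y_{t,i}\defeq|\bg_{t,i}|/(\lambda+\sqrt{\bv_{t,i}})\le1$.
\item Apply AM--GM to the predictable factor times this conditional expectation. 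The noise then enters as $\bigl(\E_{t-1}[|\bxi_{t,i}|Y_{t,i}]\bigr)^2$, a squared conditional expectation rather than an expectation of a square.
\item Hölder with exponents $(\p,\cp)$, together with $Y_{t,i}\le1$ and $\cp\ge2$, bounds this by $\bsig_i^2(\E_{t-1}[Y_{t,i}^2])^{2/\cp}$.
\item Divide by $\lambda+\sqrt{\bw_{t,i}}\ge\bc_i$, then use Jensen and concavity of $x^{2/\cp}$ over $t$. This gives $\frac{\bsig_i^2}{\bc_i}T^{1-\frac{2}{\cp}}D_{T,i}^{2/\cp}$.
\item It is this term, set against $\bc_iD_{T,i}$, that your $\bc_i$ balances, yielding $\bsig_iT^{\frac{1}{\p}-\frac{1}{2}}D_{T,i}^{\frac{1}{\cp}+\frac{1}{2}}$.
\end{itemize}

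\textbf{The conversion step also needs repair.}
\begin{itemize}
\item Your displayed Cauchy--Schwarz bound for $\E\sum_t|\nabla_if(\bx_t)|$ is off by a factor $\sqrt{T}$; test it with constant gradients.
\item With that factor restored, self-bounding $X=\E\sum_t\|\nabla f(\bx_t)\|_1$ gives a bound on $X/T$ that does not decay.
\item Instead, self-bound $\E\sum_i\sqrt{\bu_{T,i}}$ using $\bw_{t,i}\le\bv_{T,i}+\bu_{T,i}+\bc_i^2$. Do not use $\bw_{t,i}\le\bw_{T,i}$, which can fail.
\item Convert to the $\ell_1$ metric only at the end, via $\sum_i\sqrt{\bu_{T,i}}\ge\frac{1}{\sqrt{T}}\sum_t\|\nabla f(\bx_t)\|_1$.
\item Handle coordinates with $D_{T,i}<1$ separately. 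There $\bc_i$ may be large, but $\E\sqrt{\bu_{T,i}}=\O(\lambda)$ directly.
\end{itemize}
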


\begin{proof}
In the following proof, let
\begin{equation}
\bc_{i}\defeq\frac{\bsig_{i}T^{\frac{1}{2}-\frac{1}{\cp}}}{D_{T,i}^{\frac{1}{2}-\frac{1}{\cp}}},\forall i\in\left[d\right],\label{eq:AdaGrad-ub-c}
\end{equation}
where
\begin{equation}
D_{T,i}\defeq2\ln\left(1+\frac{\sqrt{2}\bsig_{i}T^{\frac{1}{\p}}+\E\left[\sqrt{2\bu_{T,i}}\right]}{\lambda}\right),\forall i\in\left[d\right].\label{eq:AdaGrad-ub-D}
\end{equation}
By Lemma \ref{lem:AdaGrad-ln-bound}, we have
\begin{equation}
\E\left[\ln\left(1+\frac{\bv_{T,i}}{\lambda^{2}}\right)\right]\leq D_{T,i}\leq2\ln\res_{T}.\label{eq:AdaGrad-ub-ineq}
\end{equation}

We sum up the inequality in Lemma \ref{lem:AdaGrad-core} (with $\bc$
defined in (\ref{eq:AdaGrad-ub-c})) from $t=1$ to $T$ and use $f(\bx_{1})-f(\bx_{T+1})\leq\Delta$
(Assumption \ref{assu:lb}) to have
\begin{align*}
\frac{\gamma}{2}\E\left[\sum_{t=1}^{T}\left\Vert \frac{(\nabla f(\bx_{t}))^{2}}{\lambda+\sqrt{\bw_{t}}}\right\Vert _{1}\right] & \leq\Delta+\gamma\sum_{i=1}^{d}\frac{\bsig_{i}^{2}}{\bc_{i}}\sum_{t=1}^{T}\left(\E\left[\frac{\bg_{t,i}^{2}}{\lambda^{2}+\bv_{t,i}}\right]\right)^{\frac{2}{\cp}}+\gamma\sum_{i=1}^{d}\left(\bc_{i}+\frac{\gamma\bL_{i}}{2}\right)\sum_{t=1}^{T}\E\left[\frac{\bg_{t,i}^{2}}{\lambda^{2}+\bv_{t,i}}\right]\\
 & \overset{(a)}{\leq}\Delta+\gamma\sum_{i=1}^{d}\frac{\bsig_{i}^{2}}{\bc_{i}}T^{1-\frac{2}{\cp}}\left(\E\left[\ln\left(1+\frac{\bv_{T,i}}{\lambda^{2}}\right)\right]\right)^{\frac{2}{\cp}}+\gamma\sum_{i=1}^{d}\left(\bc_{i}+\frac{\gamma\bL_{i}}{2}\right)\E\left[\ln\left(1+\frac{\bv_{T,i}}{\lambda^{2}}\right)\right]\\
 & \overset{(\ref{eq:AdaGrad-ub-ineq})}{\leq}\Delta+\gamma\sum_{i=1}^{d}\left(\frac{\bsig_{i}^{2}}{\bc_{i}}T^{1-\frac{2}{\cp}}D_{T,i}^{\frac{2}{\cp}}+\bc_{i}D_{T,i}\right)+\gamma^{2}\left\Vert \bL\right\Vert _{1}\ln\res_{T}\\
 & \overset{(\ref{eq:AdaGrad-ub-c})}{=}\Delta+2\gamma\sum_{i=1}^{d}\bsig_{i}T^{\frac{1}{2}-\frac{1}{\cp}}D_{T,i}^{\frac{1}{\cp}+\frac{1}{2}}+\gamma^{2}\left\Vert \bL\right\Vert _{1}\ln\res_{T}\\
 & \overset{(\ref{eq:AdaGrad-ub-ineq})}{\leq}\Delta+2^{\frac{1}{\cp}+\frac{3}{2}}\gamma\left\Vert \bsig\right\Vert _{1}T^{\frac{1}{2}-\frac{1}{\cp}}\ln^{\frac{1}{\cp}+\frac{1}{2}}\res_{T}+\gamma^{2}\left\Vert \bL\right\Vert _{1}\ln\res_{T}\\
 & \overset{(b)}{=}\Delta+4\gamma\left\Vert \bsig\right\Vert _{1}T^{\frac{1}{\p}-\frac{1}{2}}\ln^{\frac{1}{\cp}+\frac{1}{2}}\res_{T}+\gamma^{2}\left\Vert \bL\right\Vert _{1}\ln\res_{T},
\end{align*}
where $(a)$ is by applying Lemma \ref{lem:AdaGrad-general-ln} with
$q=\frac{2}{\cp}$ and $q=1$ and $(b)$ is due to $\frac{1}{\cp}\leq\frac{1}{2}$
and $\frac{1}{2}-\frac{1}{\cp}=\frac{1}{\p}-\frac{1}{2}$. Divide
both sides of the above inequality by $\frac{\gamma}{2}$ to obtain
\begin{equation}
\E\left[\sum_{t=1}^{T}\left\Vert \frac{(\nabla f(\bx_{t}))^{2}}{\lambda+\sqrt{\bw_{t}}}\right\Vert _{1}\right]\leq\frac{2\Delta}{\gamma}+2\gamma\left\Vert \bL\right\Vert _{1}\ln\res_{T}+8\left\Vert \bsig\right\Vert _{1}T^{\frac{1}{\p}-\frac{1}{2}}\ln^{\frac{1}{\cp}+\frac{1}{2}}\res_{T}.\label{eq:AdaGrad-ub-1}
\end{equation}

Next, recall that $\bu_{t,i}=\sum_{s=1}^{t}(\nabla_{i}f(\bx_{s}))^{2},\forall t\in\N$
(introduced in Lemma \ref{lem:AdaGrad-v}), we hence have
\begin{align}
\E\left[\sqrt{\bu_{T,i}}\right] & =\E\left[\sqrt{\frac{\bu_{T,i}}{\lambda+\sqrt{\bv_{T,i}+\bu_{T,i}+\bc_{i}^{2}}}\times\left(\lambda+\sqrt{\bv_{T,i}+\bu_{T,i}+\bc_{i}^{2}}\right)}\right]\nonumber \\
 & \overset{(c)}{\leq}\sqrt{\E\left[\frac{\bu_{T,i}}{\lambda+\sqrt{\bv_{T,i}+\bu_{T,i}+\bc_{i}^{2}}}\right]\E\left[\lambda+\sqrt{\bv_{T,i}+\bu_{T,i}+\bc_{i}^{2}}\right]}\nonumber \\
 & \leq\sqrt{\E\left[\sum_{t=1}^{T}\frac{(\nabla_{i}f(\bx_{t}))^{2}}{\lambda+\sqrt{\bw_{t,i}}}\right]\E\left[\lambda+\sqrt{\bv_{T,i}+\bu_{T,i}+\bc_{i}^{2}}\right]}\nonumber \\
 & \overset{(d)}{\leq}\sqrt{\E\left[\sum_{t=1}^{T}\frac{(\nabla_{i}f(\bx_{t}))^{2}}{\lambda+\sqrt{\bw_{t,i}}}\right]\left(\lambda+\bc_{i}+\sqrt{2}\bsig_{i}T^{\frac{1}{\p}}+\sqrt{3}\E\left[\sqrt{\bu_{T,i}}\right]\right)}\nonumber \\
 & \overset{(\ref{eq:AdaGrad-ub-c})}{=}\sqrt{\E\left[\sum_{t=1}^{T}\frac{(\nabla_{i}f(\bx_{t}))^{2}}{\lambda+\sqrt{\bw_{t,i}}}\right]\left(\lambda+\frac{\bsig_{i}}{D_{T,i}^{\frac{1}{2}-\frac{1}{\cp}}}T^{\frac{1}{2}-\frac{1}{\cp}}+\sqrt{2}\bsig_{i}T^{\frac{1}{\p}}+\sqrt{3}\E\left[\sqrt{\bu_{T,i}}\right]\right)},\label{eq:AdaGrad-ub-2}
\end{align}
where $(c)$ is by H\"{o}lder's inequality and $(d)$ follows similar
steps to proving Lemma \ref{lem:AdaGrad-v}. Now, we consider two
cases:
\begin{casenv}
\item $D_{T,i}\geq1$: in this case, we have
\[
\frac{\bsig_{i}}{D_{T,i}^{\frac{1}{2}-\frac{1}{\cp}}}T^{\frac{1}{2}-\frac{1}{\cp}}\leq\bsig_{i}T^{\frac{1}{2}-\frac{1}{\cp}}=\bsig_{i}T^{\frac{1}{\p}-\frac{1}{2}}\leq\bsig_{i}T^{\frac{1}{\p}},
\]
which implies that
\[
\E\left[\sqrt{\bu_{T,i}}\right]\overset{(\ref{eq:AdaGrad-ub-2})}{\leq}\sqrt{\E\left[\sum_{t=1}^{T}\frac{(\nabla_{i}f(\bx_{t}))^{2}}{\lambda+\sqrt{\bw_{t,i}}}\right]\left(\lambda+\left(1+\sqrt{2}\right)\bsig_{i}T^{\frac{1}{\p}}+\sqrt{3}\E\left[\sqrt{\bu_{T,i}}\right]\right)}.
\]
\item $D_{T,i}<1$: in this case, we have
\[
1>D_{T,i}\overset{(\ref{eq:AdaGrad-ub-D})}{=}2\ln\left(1+\frac{\sqrt{2}\bsig_{i}T^{\frac{1}{\p}}+\E\left[\sqrt{2\bu_{T,i}}\right]}{\lambda}\right),
\]
which implies that
\[
\E\left[\sqrt{\bu_{T,i}}\right]\leq\bsig_{i}T^{\frac{1}{\p}}+\E\left[\sqrt{\bu_{T,i}}\right]\leq\frac{\sqrt{e}-1}{\sqrt{2}}\lambda.
\]
\end{casenv}
Therefore, we always have
\[
\E\left[\sqrt{\bu_{T,i}}\right]\leq\sqrt{\E\left[\sum_{t=1}^{T}\frac{(\nabla_{i}f(\bx_{t}))^{2}}{\lambda+\sqrt{\bw_{t,i}}}\right]\left(\lambda+\left(1+\sqrt{2}\right)\bsig_{i}T^{\frac{1}{\p}}+\sqrt{3}\E\left[\sqrt{\bu_{T,i}}\right]\right)}+\frac{\sqrt{e}-1}{\sqrt{2}}\lambda.
\]
Sum up the above inequality for all $i\in\left[d\right]$ to have
\begin{align}
\E\left[\sum_{i=1}^{d}\sqrt{\bu_{T,i}}\right] & \leq\sum_{i=1}^{d}\sqrt{\E\left[\sum_{t=1}^{T}\frac{(\nabla_{i}f(\bx_{t}))^{2}}{\lambda+\sqrt{\bw_{t,i}}}\right]\left(\lambda+\left(1+\sqrt{2}\right)\bsig_{i}T^{\frac{1}{\p}}+\sqrt{3}\E\left[\sqrt{\bu_{T,i}}\right]\right)}+\frac{\sqrt{e}-1}{\sqrt{2}}d\lambda\nonumber \\
 & \overset{(e)}{\leq}\sqrt{\left(\sum_{i=1}^{d}\E\left[\sum_{t=1}^{T}\frac{(\nabla_{i}f(\bx_{t}))^{2}}{\lambda+\sqrt{\bw_{t,i}}}\right]\right)\left(d\lambda+\left(1+\sqrt{2}\right)\left\Vert \bsig\right\Vert _{1}T^{\frac{1}{\p}}+\sqrt{3}\E\left[\sum_{i=1}^{d}\sqrt{\bu_{T,i}}\right]\right)}+\frac{\sqrt{e}-1}{\sqrt{2}}d\lambda\nonumber \\
 & =\sqrt{\E\left[\sum_{t=1}^{T}\left\Vert \frac{(\nabla f(\bx_{t}))^{2}}{\lambda+\sqrt{\bw_{t}}}\right\Vert _{1}\right]\left(d\lambda+\left(1+\sqrt{2}\right)\left\Vert \bsig\right\Vert _{1}T^{\frac{1}{\p}}+\sqrt{3}\E\left[\sum_{i=1}^{d}\sqrt{\bu_{T,i}}\right]\right)}+\frac{\sqrt{e}-1}{\sqrt{2}}d\lambda\nonumber \\
\Rightarrow\E\left[\sum_{i=1}^{d}\sqrt{\bu_{T,i}}\right] & \leq\O\left(d\lambda+\E\left[\sum_{t=1}^{T}\left\Vert \frac{(\nabla f(\bx_{t}))^{2}}{\lambda+\sqrt{\bw_{t}}}\right\Vert _{1}\right]+\sqrt{\E\left[\sum_{t=1}^{T}\left\Vert \frac{(\nabla f(\bx_{t}))^{2}}{\lambda+\sqrt{\bw_{t}}}\right\Vert _{1}\right]\left\Vert \bsig\right\Vert _{1}T^{\frac{1}{\p}}}\right),\label{eq:AdaGrad-ub-3}
\end{align}
where $(e)$ is due to Cauchy-Schwarz inequality.

Now, we combine (\ref{eq:AdaGrad-ub-1}) and (\ref{eq:AdaGrad-ub-3})
to obtain
\begin{align*}
\E\left[\sum_{i=1}^{d}\sqrt{\bu_{T,i}}\right]\leq & \O\left(d\lambda+\frac{\Delta}{\gamma}+\gamma\left\Vert \bL\right\Vert _{1}\ln\res_{T}+\left\Vert \bsig\right\Vert _{1}T^{\frac{1}{\p}-\frac{1}{2}}\ln^{\frac{1}{\cp}+\frac{1}{2}}\res_{T}\right.\\
 & \left.\quad+\sqrt{\left(\frac{\Delta}{\gamma}+\gamma\left\Vert \bL\right\Vert _{1}\ln\res_{T}+\left\Vert \bsig\right\Vert _{1}T^{\frac{1}{\p}-\frac{1}{2}}\ln^{\frac{1}{\cp}+\frac{1}{2}}\res_{T}\right)\left\Vert \bsig\right\Vert _{1}T^{\frac{1}{\p}}}\right).
\end{align*}
Lastly, we observe that
\[
\sum_{i=1}^{d}\sqrt{\bu_{T,i}}=\sum_{i=1}^{d}\sqrt{\sum_{t=1}^{T}(\nabla_{i}f(\bx_{t}))^{2}}\geq\frac{\sum_{i=1}^{d}\sum_{t=1}^{T}\left|\nabla_{i}f(\bx_{t})\right|}{\sqrt{T}}=\frac{\sum_{t=1}^{T}\left\Vert \nabla f(\bx_{t})\right\Vert _{1}}{\sqrt{T}},
\]
which gives us
\begin{align*}
\E\left[\frac{1}{T}\sum_{t=1}^{T}\left\Vert \nabla f(\bx_{t})\right\Vert _{1}\right]\leq & \O\left(\frac{d\lambda+\frac{\Delta}{\gamma}+\gamma\left\Vert \bL\right\Vert _{1}\ln\res_{T}}{\sqrt{T}}+\frac{\left\Vert \bsig\right\Vert _{1}\ln^{\frac{1}{\cp}+\frac{1}{2}}\res_{T}}{T^{\frac{\p-1}{\p}}}\right.\\
 & \left.\quad+\frac{\sqrt{\left(\frac{\Delta}{\gamma}+\gamma\left\Vert \bL\right\Vert _{1}\ln\res_{T}\right)\left\Vert \bsig\right\Vert _{1}}}{T^{\frac{\p-1}{2\p}}}+\frac{\left\Vert \bsig\right\Vert _{1}\ln^{\frac{1}{2\cp}+\frac{1}{4}}\res_{T}}{T^{\frac{3\p-4}{4\p}}}\right).
\end{align*}
\end{proof}

\subsection{Helpful Lemmas}

To prove Theorem \ref{thm:AdaGrad-ub}, we require the following four
lemmas. Before presenting them, we recall a key ingredient in our
analysis, the generalized proxy stepsize, defined as follows
\[
\bw_{t}\defeq\bv_{t-1}+(\nabla f(\bx_{t}))^{2}+\bc^{2}\in\F_{t-1},
\]
where $\bc\in\R_{\geq0}^{d}$ is a free parameter that will be specified
in the final proof. As discussed in Section \ref{sec:AdaGrad}, it
plays a crucial role in establishing the final convergence rate.

We are now ready to give the first result, Lemma \ref{lem:AdaGrad-core},
which characterizes the per-iteration progress of $\AdaGrad$.
\begin{lem}
\label{lem:AdaGrad-core}Under Assumptions \ref{assu:smooth}, \ref{assu:unbias},
and \ref{assu:heavy}, for any $\bc\in\R_{\geq0}^{d}$ and $t\in\N$,
$\AdaGrad$ (Algorithm \ref{alg:AdaGrad}) guarantees
\[
\frac{\gamma}{2}\E\left[\left\Vert \frac{(\nabla f(\bx_{t}))^{2}}{\lambda+\sqrt{\bw_{t}}}\right\Vert _{1}\right]\leq\E\left[f(\bx_{t})-f(\bx_{t+1})\right]+\gamma\sum_{i=1}^{d}\frac{\bsig_{i}^{2}}{\bc_{i}}\left(\E\left[\frac{\bg_{t,i}^{2}}{\lambda^{2}+\bv_{t,i}}\right]\right)^{\frac{2}{\cp}}+\gamma\sum_{i=1}^{d}\left(\bc_{i}+\frac{\gamma\bL_{i}}{2}\right)\E\left[\frac{\bg_{t,i}^{2}}{\lambda^{2}+\bv_{t,i}}\right].
\]
\end{lem}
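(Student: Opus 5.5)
We start from the coordinate-wise smoothness (Assumption \ref{assu:smooth}) and the $\AdaGrad$ update, writing $\boldsymbol{\eta}_{t}\defeq\frac{\gamma}{\lambda+\sqrt{\bv_{t}}}$:
\[
f(\bx_{t+1})\leq f(\bx_{t})-\sum_{i=1}^{d}\boldsymbol{\eta}_{t,i}\nabla_{i}f(\bx_{t})\bg_{t,i}+\sum_{i=1}^{d}\frac{\bL_{i}\boldsymbol{\eta}_{t,i}^{2}\bg_{t,i}^{2}}{2}.
\]
The quadratic term is handled by $(\lambda+\sqrt{\bv_{t,i}})^{2}\geq\lambda^{2}+\bv_{t,i}$. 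It gives exactly $\frac{\gamma^{2}\bL_{i}}{2}\frac{\bg_{t,i}^{2}}{\lambda^{2}+\bv_{t,i}}$, which is the $\frac{\gamma\bL_{i}}{2}$ part of the last sum.

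For the first-order term, we introduce the predictable proxy $\tilde{\boldsymbol{\eta}}_{t,i}\defeq\frac{\gamma}{\lambda+\sqrt{\bw_{t,i}}}\in\F_{t-1}$ and split
\[
\boldsymbol{\eta}_{t,i}\nabla_{i}f(\bx_{t})\bg_{t,i}=\tilde{\boldsymbol{\eta}}_{t,i}\nabla_{i}f(\bx_{t})\bg_{t,i}+(\boldsymbol{\eta}_{t,i}-\tilde{\boldsymbol{\eta}}_{t,i})\nabla_{i}f(\bx_{t})\bg_{t,i}.
\]
By Assumption \ref{assu:unbias}, the first piece has conditional expectation $\gamma\frac{(\nabla_{i}f(\bx_{t}))^{2}}{\lambda+\sqrt{\bw_{t,i}}}$. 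This term gives twice the left-hand side, so the error term only needs to be bounded by half of it plus the stated remainders.

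For the error, we use
\[
|\boldsymbol{\eta}_{t,i}-\tilde{\boldsymbol{\eta}}_{t,i}|=\frac{\gamma\,|\sqrt{\bw_{t,i}}-\sqrt{\bv_{t,i}}|}{(\lambda+\sqrt{\bv_{t,i}})(\lambda+\sqrt{\bw_{t,i}})}\leq\frac{\gamma\,|\bg_{t,i}^{2}-(\nabla_{i}f(\bx_{t}))^{2}-\bc_{i}^{2}|}{(\lambda+\sqrt{\bv_{t,i}})(\lambda+\sqrt{\bw_{t,i}})(\sqrt{\bw_{t,i}}+\sqrt{\bv_{t,i}})}.
\]
We then bound the numerator by $|\bg_{t,i}-\nabla_{i}f(\bx_{t})|\,(|\bg_{t,i}|+|\nabla_{i}f(\bx_{t})|)+\bc_{i}^{2}$, so $|\bxi_{t,i}|$ appears.

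Each resulting product is split by Young's inequality into three parts.
\begin{itemize}
\item A part $\frac{\gamma}{4}\frac{(\nabla_{i}f)^{2}}{\lambda+\sqrt{\bw_{t,i}}}$, absorbed into the left-hand side.
\item A part of the form $\gamma\,\bc_{i}\frac{\bg_{t,i}^{2}}{\lambda^{2}+\bv_{t,i}}$, coming from the $\bc_{i}^{2}$ contribution and the $|\bg_{t,i}|$ pieces, using $\sqrt{\bw_{t,i}}\geq\bc_{i}$ and $\sqrt{\bw_{t,i}}\geq|\nabla_{i}f|$.
\item A genuinely noisy part $\gamma\frac{\bxi_{t,i}^{2}}{\bc_{i}}\cdot\frac{\bg_{t,i}^{2}}{\lambda^{2}+\bv_{t,i}}$-type term.
\end{itemize}

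The main obstacle is the noisy part, since only a $\p$-th moment of $\bxi_{t,i}$ is available. Here we do not bound $\bxi_{t,i}^{2}$ directly. Instead we keep one factor as $\frac{|\bxi_{t,i}|\,|\bg_{t,i}|}{\sqrt{\lambda^{2}+\bv_{t,i}}}$, using $\sqrt{\bw_{t,i}}\geq\bc_{i}$ to pay the $\frac{1}{\bc_{i}}$. We then apply H\"older with exponents $\p$ and $\cp$:
\[
\E\left[|\bxi_{t,i}|\cdot Z\right]\leq(\E|\bxi_{t,i}|^{\p})^{1/\p}(\E Z^{\cp})^{1/\cp}\leq\bsig_{i}(\E Z^{\cp})^{1/\cp},
\]
where $Z$ is a ratio bounded by $1$. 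Applying the same scheme to the second $|\bxi|$ factor produces $\bsig_{i}^{2}$. Since $Z\leq1$ and $\cp\geq2$, we have $Z^{\cp}\leq Z^{2}=\frac{\bg_{t,i}^{2}}{\lambda^{2}+\bv_{t,i}}$. This yields $\frac{\bsig_{i}^{2}}{\bc_{i}}\left(\E\frac{\bg_{t,i}^{2}}{\lambda^{2}+\bv_{t,i}}\right)^{2/\cp}$.

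Bookkeeping the Young constants so that at most $\frac{\gamma}{2}$ of the left-hand side is consumed, then taking total expectations and rearranging, gives the lemma. The delicate point is choosing where to place the $\bc_{i}$ and $\sqrt{\bw_{t,i}}$ factors so that every noise occurrence pairs with a bounded ratio $Z$. I would first try the decomposition $\bg_{t,i}^{2}-(\nabla_{i}f)^{2}=\bxi_{t,i}(\bg_{t,i}+\nabla_{i}f)$ and then check each term against this pairing.
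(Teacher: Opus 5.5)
Your skeleton matches the paper's proof: smoothness, the predictable proxy $\bw_t$, the formula for the stepsize difference, the factorization $\bg_{t,i}^2-(\nabla_i f(\bx_t))^2=\bxi_{t,i}(\bg_{t,i}+\nabla_i f(\bx_t))$, and H\"older with exponents $\p,\cp$. However, the step you flag as the main obstacle does not go through as described. You apply Young's inequality pointwise, which leaves the term $\frac{1}{\bc_i}\bxi_{t,i}^2Z^2$ with $Z=\frac{|\bg_{t,i}|}{\sqrt{\lambda^2+\bv_{t,i}}}$. Under Assumption \ref{assu:heavy} its expectation can be $+\infty$. At $t=1$ one has $Z^2=\frac{\bg_{1,i}^2}{\lambda^2+\bg_{1,i}^2}\to 1$ as $|\bxi_{1,i}|\to\infty$, so $\E[\bxi_{1,i}^2Z^2]=+\infty$ whenever the noise has infinite variance. ``Keeping one factor'' and applying H\"older to each $|\bxi_{t,i}|$ separately would require $\E[(|\bxi_{t,i}|Z)^2]\le(\E[|\bxi_{t,i}|Z])^2$, which is the wrong direction of Jensen. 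H\"older cannot turn the expectation of a square into a product of two expectations, so $\bsig_i^2$ cannot be recovered once a pointwise $\bxi_{t,i}^2$ has appeared.

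The missing idea is to apply AM-GM only after conditioning, so that the square sits outside $\E_{t-1}$. First use $|\bg_{t,i}|+|\nabla_i f(\bx_t)|\le\sqrt{\bw_{t,i}}+\sqrt{\bv_{t,i}}$ and $\bc_i\le\sqrt{\bw_{t,i}}+\sqrt{\bv_{t,i}}$ to cancel the factor $\sqrt{\bw_{t,i}}+\sqrt{\bv_{t,i}}$. Then pull the $\F_{t-1}$-measurable factor $\frac{|\nabla_i f(\bx_t)|}{\lambda+\sqrt{\bw_{t,i}}}$ out of $\E_{t-1}$. The error is then at most $\frac{|\nabla_i f(\bx_t)|}{\lambda+\sqrt{\bw_{t,i}}}(X_1+X_2)$, with $X_1=\E_{t-1}[\frac{|\bxi_{t,i}||\bg_{t,i}|}{\lambda+\sqrt{\bv_{t,i}}}]$ and $X_2=\E_{t-1}[\frac{\bc_i|\bg_{t,i}|}{\lambda+\sqrt{\bv_{t,i}}}]$. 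Now apply $|\nabla_i f(\bx_t)|X_j\le\frac{(\nabla_i f(\bx_t))^2}{4}+X_j^2$ for $j=1,2$; this consumes exactly half of the main term. Conditional H\"older gives $X_1^2\le\bsig_i^2(\E_{t-1}[Z^{\cp}])^{2/\cp}\le\bsig_i^2(\E_{t-1}[Z^2])^{2/\cp}$, and with $\lambda+\sqrt{\bw_{t,i}}\ge\bc_i$ this yields the noise term, but with $\E_{t-1}$ inside the power. After taking total expectation, you still need Jensen for the concave map $x\mapsto x^{2/\cp}$ (valid since $\cp\ge 2$) to get $\E[(\E_{t-1}[Z^2])^{2/\cp}]\le(\E[Z^2])^{2/\cp}$, which is the form in the lemma; your sketch omits this step. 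Merging the two $|\bxi_{t,i}|$ pieces through the first inequality is also what limits the argument to two AM-GM applications, each costing $\frac{1}{4}$. That is how the exact constants in the statement come out; three separate products would inflate the $\frac{\bsig_i^2}{\bc_i}$ coefficient.
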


\begin{proof}
We start with Assumption \ref{assu:smooth} and use the update rule
of $\AdaGrad$ to obtain
\begin{align*}
f(\bx_{t+1}) & \leq f(\bx_{t})+\left\langle \nabla f(\bx_{t}),\bx_{t+1}-\bx_{t}\right\rangle +\frac{\left\Vert \bx_{t+1}-\bx_{t}\right\Vert _{\bL}^{2}}{2}\\
 & =f(\bx_{t})-\gamma\left\langle \nabla f(\bx_{t}),\frac{\bg_{t}}{\lambda+\sqrt{\bv_{t}}}\right\rangle +\frac{\gamma^{2}}{2}\left\Vert \frac{\bg_{t}}{\lambda+\sqrt{\bv_{t}}}\right\Vert _{\bL}^{2}\\
 & \leq f(\bx_{t})-\gamma\left\langle \nabla f(\bx_{t}),\frac{\bg_{t}}{\lambda+\sqrt{\bv_{t}}}\right\rangle +\frac{\gamma^{2}}{2}\left\Vert \frac{\bL\bg_{t}^{2}}{\lambda^{2}+\bv_{t}}\right\Vert _{1}\\
 & =f(\bx_{t})-\gamma\left\langle \nabla f(\bx_{t}),\frac{\bg_{t}}{\lambda+\sqrt{\bw_{t}}}\right\rangle +\gamma\left\langle \nabla f(\bx_{t}),\frac{\bg_{t}}{\lambda+\sqrt{\bw_{t}}}-\frac{\bg_{t}}{\lambda+\sqrt{\bv_{t}}}\right\rangle +\frac{\gamma^{2}}{2}\left\Vert \frac{\bL\bg_{t}^{2}}{\lambda^{2}+\bv_{t}}\right\Vert _{1}.
\end{align*}
Take conditional expectations on both sides of the above inequality
to obtain
\begin{align}
 & \E_{t-1}\left[f(\bx_{t+1})\right]\nonumber \\
\leq & f(\bx_{t})-\gamma\left\Vert \frac{(\nabla f(\bx_{t}))^{2}}{\lambda+\sqrt{\bw_{t}}}\right\Vert _{1}+\gamma\E_{t-1}\left[\left\langle \nabla f(\bx_{t}),\frac{\bg_{t}}{\lambda+\sqrt{\bw_{t}}}-\frac{\bg_{t}}{\lambda+\sqrt{\bv_{t}}}\right\rangle \right]+\frac{\gamma^{2}}{2}\E_{t-1}\left[\left\Vert \frac{\bL\bg_{t}^{2}}{\lambda^{2}+\bv_{t}}\right\Vert _{1}\right]\nonumber \\
= & f(\bx_{t})-\gamma\left\Vert \frac{(\nabla f(\bx_{t}))^{2}}{\lambda+\sqrt{\bw_{t}}}\right\Vert _{1}+\gamma\sum_{i=1}^{d}\E_{t-1}\left[\frac{\nabla_{i}f(\bx_{t})\bg_{t,i}}{\lambda+\sqrt{\bw_{t,i}}}-\frac{\nabla_{i}f(\bx_{t})\bg_{t,i}}{\lambda+\sqrt{\bv_{t,i}}}\right]+\frac{\gamma^{2}}{2}\sum_{i=1}^{d}\E_{t-1}\left[\frac{\bL_{i}\bg_{t,i}^{2}}{\lambda^{2}+\bv_{t,i}}\right],\label{eq:AdaGrad-core-1}
\end{align}
where the first step is by 
\[
\E_{t-1}\left[\left\langle \nabla f(\bx_{t}),\frac{\bg_{t}}{\lambda+\sqrt{\bw_{t}}}\right\rangle \right]\overset{\bx_{t},\bw_{t}\in\F_{t-1}}{=}\left\langle \nabla f(\bx_{t}),\frac{\E_{t-1}\left[\bg_{t}\right]}{\lambda+\sqrt{\bw_{t}}}\right\rangle \overset{\text{Assumption }\ref{assu:unbias}}{=}\left\langle \nabla f(\bx_{t}),\frac{\nabla f(\bx_{t})}{\lambda+\sqrt{\bw_{t}}}\right\rangle =\left\Vert \frac{(\nabla f(\bx_{t}))^{2}}{\lambda+\sqrt{\bw_{t}}}\right\Vert _{1}.
\]

For any fixed coordinate $i\in\left[d\right]$, we can bound
\begin{align}
 & \E_{t-1}\left[\frac{\nabla_{i}f(\bx_{t})\bg_{t,i}}{\lambda+\sqrt{\bw_{t,i}}}-\frac{\nabla_{i}f(\bx_{t})\bg_{t,i}}{\lambda+\sqrt{\bv_{t,i}}}\right]=\E_{t-1}\left[\frac{\left(\bg_{t,i}^{2}-(\nabla_{i}f(\bx_{t}))^{2}-\bc_{i}^{2}\right)\nabla_{i}f(\bx_{t})\bg_{t,i}}{(\lambda+\sqrt{\bw_{t,i}})(\lambda+\sqrt{\bv_{t,i}})(\sqrt{\bw_{t,i}}+\sqrt{\bv_{t,i}})}\right]\nonumber \\
= & \E_{t-1}\left[\frac{(\bg_{t,i}+\nabla_{i}f(\bx_{t}))\bxi_{t,i}\nabla_{i}f(\bx_{t})\bg_{t,i}}{(\lambda+\sqrt{\bw_{t,i}})(\lambda+\sqrt{\bv_{t,i}})(\sqrt{\bw_{t,i}}+\sqrt{\bv_{t,i}})}\right]+\E_{t-1}\left[\frac{-\bc_{i}^{2}\nabla_{i}f(\bx_{t})\bg_{t,i}}{(\lambda+\sqrt{\bw_{t,i}})(\lambda+\sqrt{\bv_{t,i}})(\sqrt{\bw_{t,i}}+\sqrt{\bv_{t,i}})}\right]\nonumber \\
\leq & \E_{t-1}\left[\frac{\left(\left|\bg_{t,i}\right|+\left|\nabla_{i}f(\bx_{t})\right|\right)\left|\bxi_{t,i}\right|\left|\nabla_{i}f(\bx_{t})\right|\left|\bg_{t,i}\right|}{(\lambda+\sqrt{\bw_{t,i}})(\lambda+\sqrt{\bv_{t,i}})(\sqrt{\bw_{t,i}}+\sqrt{\bv_{t,i}})}\right]+\E_{t-1}\left[\frac{\bc_{i}^{2}\left|\nabla_{i}f(\bx_{t})\right|\left|\bg_{t,i}\right|}{(\lambda+\sqrt{\bw_{t,i}})(\lambda+\sqrt{\bv_{t,i}})(\sqrt{\bw_{t,i}}+\sqrt{\bv_{t,i}})}\right]\nonumber \\
\overset{(a)}{\leq} & \E_{t-1}\left[\frac{\left|\bxi_{t,i}\right|\left|\nabla_{i}f(\bx_{t})\right|\left|\bg_{t,i}\right|}{(\lambda+\sqrt{\bw_{t,i}})(\lambda+\sqrt{\bv_{t,i}})}\right]+\E_{t-1}\left[\frac{\bc_{i}\left|\nabla_{i}f(\bx_{t})\right|\left|\bg_{t,i}\right|}{(\lambda+\sqrt{\bw_{t,i}})(\lambda+\sqrt{\bv_{t,i}})}\right]\nonumber \\
\overset{(b)}{\leq} & \frac{\left|\nabla_{i}f(\bx_{t})\right|}{\lambda+\sqrt{\bw_{t,i}}}\left(\E_{t-1}\left[\frac{\left|\bxi_{t,i}\right|\left|\bg_{t,i}\right|}{\lambda+\sqrt{\bv_{t,i}}}\right]+\E_{t-1}\left[\frac{\bc_{i}\left|\bg_{t,i}\right|}{\lambda+\sqrt{\bv_{t,i}}}\right]\right)\nonumber \\
\overset{(c)}{\leq} & \frac{(\nabla_{i}f(\bx_{t}))^{2}}{2(\lambda+\sqrt{\bw_{t,i}})}+\frac{1}{\lambda+\sqrt{\bw_{t,i}}}\left(\left(\E_{t-1}\left[\frac{\left|\bxi_{t,i}\right|\left|\bg_{t,i}\right|}{\lambda+\sqrt{\bv_{t,i}}}\right]\right)^{2}+\left(\E_{t-1}\left[\frac{\bc_{i}\left|\bg_{t,i}\right|}{\lambda+\sqrt{\bv_{t,i}}}\right]\right)^{2}\right),\label{eq:AdaGrad-core-2}
\end{align}
where $(a)$ is due to $\left|\bg_{t,i}\right|+\left|\nabla_{i}f(\bx_{t})\right|\leq\sqrt{\bw_{t,i}}+\sqrt{\bv_{t,i}}$
and $\bc_{i}\leq\sqrt{\bw_{t,i}}+\sqrt{\bv_{t,i}}$, $(b)$ is from
$\frac{\left|\nabla_{i}f(\bx_{t})\right|}{\lambda+\sqrt{\bw_{t,i}}}\in\F_{t-1}$,
and $(c)$ holds by AM-GM inequality, i.e., $\left|\nabla_{i}f(\bx_{t})\right|X\leq\frac{(\nabla_{i}f(\bx_{t}))^{2}}{4}+X^{2}$
for $X=\E_{t-1}\left[\frac{\left|\bxi_{t,i}\right|\left|\bg_{t,i}\right|}{\lambda+\sqrt{\bv_{t,i}}}\right]$
and $\E_{t-1}\left[\frac{\bc_{i}\left|\bg_{t,i}\right|}{\lambda+\sqrt{\bv_{t,i}}}\right]$,
respectively. Next, we apply H\"{o}lder's inequality to get
\begin{align}
 & \left(\E_{t-1}\left[\frac{\left|\bxi_{t,i}\right|\left|\bg_{t,i}\right|}{\lambda+\sqrt{\bv_{t,i}}}\right]\right)^{2}\leq\left(\E_{t-1}\left[\left|\bxi_{t,i}\right|^{\p}\right]\right)^{\frac{2}{\p}}\left(\E_{t-1}\left[\frac{\left|\bg_{t,i}\right|^{\cp}}{(\lambda+\sqrt{\bv_{t,i}})^{\cp}}\right]\right)^{\frac{2}{\cp}}\nonumber \\
\overset{\text{Assumption }\ref{assu:heavy}}{\leq} & \bsig_{i}^{2}\left(\E_{t-1}\left[\frac{\left|\bg_{t,i}\right|^{\cp}}{(\lambda+\sqrt{\bv_{t,i}})^{\cp}}\right]\right)^{\frac{2}{\cp}}\leq\bsig_{i}^{2}\left(\E_{t-1}\left[\left(\frac{\bg_{t,i}^{2}}{\lambda^{2}+\bv_{t,i}}\right)^{\frac{\cp}{2}}\right]\right)^{\frac{2}{\cp}},\label{eq:AdaGrad-core-3}
\end{align}
and
\begin{equation}
\left(\E_{t-1}\left[\frac{\bc_{i}\left|\bg_{t,i}\right|}{\lambda+\sqrt{\bv_{t,i}}}\right]\right)^{2}\leq\E_{t-1}\left[\frac{\bc_{i}^{2}\bg_{t,i}^{2}}{(\lambda+\sqrt{\bv_{t,i}})^{2}}\right]\leq\E_{t-1}\left[\frac{\bc_{i}^{2}\bg_{t,i}^{2}}{\lambda^{2}+\bv_{t,i}}\right].\label{eq:AdaGrad-core-4}
\end{equation}
Plug (\ref{eq:AdaGrad-core-3}) and (\ref{eq:AdaGrad-core-4}) into
(\ref{eq:AdaGrad-core-2}) to obtain
\begin{align}
 & \E_{t-1}\left[\frac{\nabla_{i}f(\bx_{t})\bg_{t,i}}{\lambda+\sqrt{\bw_{t,i}}}-\frac{\nabla_{i}f(\bx_{t})\bg_{t,i}}{\lambda+\sqrt{\bv_{t,i}}}\right]\nonumber \\
\leq & \frac{(\nabla_{i}f(\bx_{t}))^{2}}{2(\lambda+\sqrt{\bw_{t,i}})}+\frac{\bsig_{i}^{2}}{\lambda+\sqrt{\bw_{t,i}}}\left(\E_{t-1}\left[\left(\frac{\bg_{t,i}^{2}}{\lambda^{2}+\bv_{t,i}}\right)^{\frac{\cp}{2}}\right]\right)^{\frac{2}{\cp}}+\frac{\bc_{i}^{2}}{\lambda+\sqrt{\bw_{t,i}}}\E_{t-1}\left[\frac{\bg_{t,i}^{2}}{\lambda^{2}+\bv_{t,i}}\right]\nonumber \\
\leq & \frac{(\nabla_{i}f(\bx_{t}))^{2}}{2(\lambda+\sqrt{\bw_{t,i}})}+\frac{\bsig_{i}^{2}}{\bc_{i}}\left(\E_{t-1}\left[\frac{\bg_{t,i}^{2}}{\lambda^{2}+\bv_{t,i}}\right]\right)^{\frac{2}{\cp}}+\bc_{i}\E_{t-1}\left[\frac{\bg_{t,i}^{2}}{\lambda^{2}+\bv_{t,i}}\right],\label{eq:AdaGrad-core-5}
\end{align}
where the last step is by $\lambda+\sqrt{\bw_{t,i}}\geq\bc_{i}$,
$\frac{\bg_{t,i}^{2}}{\lambda^{2}+\bv_{t,i}}\leq1$, and $\frac{\cp}{2}\geq1$.

We combine (\ref{eq:AdaGrad-core-1}) and (\ref{eq:AdaGrad-core-5})
to have
\begin{align*}
\E_{t-1}\left[f(\bx_{t+1})\right]\leq & f(\bx_{t})-\frac{\gamma}{2}\left\Vert \frac{(\nabla f(\bx_{t}))^{2}}{\lambda+\sqrt{\bw_{t}}}\right\Vert _{1}+\gamma\sum_{i=1}^{d}\frac{\bsig_{i}^{2}}{\bc_{i}}\left(\E_{t-1}\left[\frac{\bg_{t,i}^{2}}{\lambda^{2}+\bv_{t,i}}\right]\right)^{\frac{2}{\cp}}\\
 & +\gamma\sum_{i=1}^{d}\left(\bc_{i}+\frac{\gamma\bL_{i}}{2}\right)\E_{t-1}\left[\frac{\bg_{t,i}^{2}}{\lambda^{2}+\bv_{t,i}}\right].
\end{align*}
Taking expectations on both sides and rearranging terms, we know
\begin{align*}
\frac{\gamma}{2}\E\left[\left\Vert \frac{(\nabla f(\bx_{t}))^{2}}{\lambda+\sqrt{\bw_{t}}}\right\Vert _{1}\right]\leq & \E\left[f(\bx_{t})-f(\bx_{t+1})\right]+\gamma\sum_{i=1}^{d}\frac{\bsig_{i}^{2}}{\bc_{i}}\E\left[\left(\E_{t-1}\left[\frac{\bg_{t,i}^{2}}{\lambda^{2}+\bv_{t,i}}\right]\right)^{\frac{2}{\cp}}\right]\\
 & +\gamma\sum_{i=1}^{d}\left(\bc_{i}+\frac{\gamma\bL_{i}}{2}\right)\E\left[\frac{\bg_{t,i}^{2}}{\lambda^{2}+\bv_{t,i}}\right].
\end{align*}
Finally, noticing that $\frac{\cp}{2}\geq1$, we hence can invoke
H\"{o}lder's inequality again to have, for any $i\in\left[d\right]$,
\[
\E\left[\left(\E_{t-1}\left[\frac{\bg_{t,i}^{2}}{\lambda^{2}+\bv_{t,i}}\right]\right)^{\frac{2}{\cp}}\right]\leq\left(\E\left[\E_{t-1}\left[\frac{\bg_{t,i}^{2}}{\lambda^{2}+\bv_{t,i}}\right]\right]\right)^{\frac{2}{\cp}}=\left(\E\left[\frac{\bg_{t,i}^{2}}{\lambda^{2}+\bv_{t,i}}\right]\right)^{\frac{2}{\cp}},
\]
which leads us to the desired result.
\end{proof}

Lemma \ref{lem:AdaGrad-general-ln} can be viewed as a generalization
of the existing inequality in the literature (see, e.g., \citet{pmlr-v97-ward19a})
from $q=1$ to any $q\in\left[0,1\right]$.
\begin{lem}
\label{lem:AdaGrad-general-ln}For any $T\in\N$, $i\in\left[d\right]$,
and $q\in\left[0,1\right]$, $\AdaGrad$ (Algorithm \ref{alg:AdaGrad})
guarantees
\[
\sum_{t=1}^{T}\left(\E\left[\frac{\bg_{t,i}^{2}}{\lambda^{2}+\bv_{t,i}}\right]\right)^{q}\leq T^{1-q}\left(\E\left[\ln\left(1+\frac{\bv_{T,i}}{\lambda^{2}}\right)\right]\right)^{q}.
\]
\end{lem}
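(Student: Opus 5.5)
Two ingredients suffice: a pathwise logarithmic telescoping bound, then concavity of $x\mapsto x^{q}$ (Jensen/H\"{o}lder) to move the power $q$ outside the sum.

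\textbf{Step 1 (deterministic bound).} Fix $i\in\left[d\right]$ and write $a_{t}\defeq\lambda^{2}+\bv_{t,i}$, so $a_{0}=\lambda^{2}$ and $a_{t}-a_{t-1}=\bg_{t,i}^{2}$. We have
\[
\frac{\bg_{t,i}^{2}}{\lambda^{2}+\bv_{t,i}}=\frac{a_{t}-a_{t-1}}{a_{t}}=1-\frac{a_{t-1}}{a_{t}}\leq\ln\frac{a_{t}}{a_{t-1}},
\]
using $1-1/x\leq\ln x$ for $x\geq1$. Summing over $t\in\left[T\right]$ telescopes to
\[
\sum_{t=1}^{T}\frac{\bg_{t,i}^{2}}{\lambda^{2}+\bv_{t,i}}\leq\ln\frac{a_{T}}{a_{0}}=\ln\left(1+\frac{\bv_{T,i}}{\lambda^{2}}\right)
\]
almost surely. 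Taking expectations gives $\sum_{t=1}^{T}\E\left[\frac{\bg_{t,i}^{2}}{\lambda^{2}+\bv_{t,i}}\right]\leq\E\left[\ln\left(1+\frac{\bv_{T,i}}{\lambda^{2}}\right)\right]$. This is exactly the $q=1$ case.

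\textbf{Step 2 (general $q$).} Set $y_{t}\defeq\E\left[\frac{\bg_{t,i}^{2}}{\lambda^{2}+\bv_{t,i}}\right]\geq0$.
\begin{itemize}
\item If $q=0$, the claim reads $T\leq T$, which is trivial (with the convention $0^{0}=1$ on both sides).
\item If $q\in(0,1]$, the map $x\mapsto x^{q}$ is concave on $\R_{\geq0}$. By Jensen's inequality (equivalently, H\"{o}lder with exponents $1/q$ and $1/(1-q)$),
\[
\sum_{t=1}^{T}y_{t}^{q}\leq T^{1-q}\left(\sum_{t=1}^{T}y_{t}\right)^{q}.
\]
\end{itemize}
Since $x\mapsto x^{q}$ is nondecreasing, inserting the bound from Step 1 into the right-hand side finishes the proof.

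The only point needing a little care is the elementary inequality in Step 1 and the edge case where $\bg_{t,i}=0$. In that case the ratio $a_{t}/a_{t-1}$ equals $1$ and the term is $0$, so nothing breaks. Also, $\lambda>0$ ensures all denominators are positive.
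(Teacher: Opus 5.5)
Your proof is correct and follows essentially the same route as the paper: concavity of $x\mapsto x^{q}$ to get $\sum_{t}y_{t}^{q}\leq T^{1-q}\bigl(\sum_{t}y_{t}\bigr)^{q}$, combined with the pathwise telescoping bound $\frac{\bg_{t,i}^{2}}{\lambda^{2}+\bv_{t,i}}=1-\frac{\lambda^{2}+\bv_{t-1,i}}{\lambda^{2}+\bv_{t,i}}\leq\ln\frac{\lambda^{2}+\bv_{t,i}}{\lambda^{2}+\bv_{t-1,i}}$. The only differences are that you do the telescoping step first and treat the $q=0$ case separately, whereas the paper applies Jensen first.
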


\begin{proof}
By the concavity of $x^{q}$ (since $q\in\left[0,1\right]$), we have
\[
\frac{1}{T}\sum_{t=1}^{T}\left(\E\left[\frac{\bg_{t,i}^{2}}{\lambda^{2}+\bv_{t,i}}\right]\right)^{q}\leq\left(\frac{1}{T}\sum_{t=1}^{T}\E\left[\frac{\bg_{t,i}^{2}}{\lambda^{2}+\bv_{t,i}}\right]\right)^{q},
\]
which implies that
\begin{align*}
\sum_{t=1}^{T}\left(\E\left[\frac{\bg_{t,i}^{2}}{\lambda^{2}+\bv_{t,i}}\right]\right)^{q} & \leq T^{1-q}\left(\E\left[\sum_{t=1}^{T}\frac{\bg_{t,i}^{2}}{\lambda^{2}+\bv_{t,i}}\right]\right)^{q}=T^{1-q}\left(\E\left[\sum_{t=1}^{T}1-\frac{\lambda^{2}+\bv_{t-1,i}}{\lambda^{2}+\bv_{t,i}}\right]\right)^{q}\\
 & \overset{(a)}{\leq}T^{1-q}\left(\E\left[\sum_{t=1}^{T}\ln\left(\frac{\lambda^{2}+\bv_{t,i}}{\lambda^{2}+\bv_{t-1,i}}\right)\right]\right)^{q}=T^{1-q}\left(\E\left[\ln\left(1+\frac{\bv_{T,i}}{\lambda^{2}}\right)\right]\right)^{q},
\end{align*}
where $(a)$ is due to $1-x^{-1}\leq\ln x,\forall x>0$.
\end{proof}

The next Lemma \ref{lem:AdaGrad-v} is the coordinate-wise version
of Lemma \ref{lem:main-AdaGradNorm-v}.
\begin{lem}
\label{lem:AdaGrad-v}Under Assumption \ref{assu:heavy}, for any
$t\in\N$ and $i\in\left[d\right]$, $\AdaGrad$ (Algorithm \ref{alg:AdaGrad})
guarantees
\[
\E\left[\sqrt{\bv_{t,i}}\right]\leq\sqrt{2}\bsig_{i}t^{\frac{1}{\p}}+\E\left[\sqrt{2\bu_{t,i}}\right],
\]
where $\bu_{t,i}\triangleq\sum_{s=1}^{t}(\nabla_{i}f(\bx_{s}))^{2},\forall t\in\N$.
\end{lem}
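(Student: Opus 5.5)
The plan is to mirror the proof of Lemma \ref{lem:main-AdaGradNorm-v}, now working on a single coordinate $i\in\left[d\right]$. By definition $\bv_{t,i}=\sum_{s=1}^{t}\bg_{s,i}^{2}$, and we write $\bg_{s,i}=\bxi_{s,i}+\nabla_{i}f(\bx_{s})$. The elementary inequality $(a+b)^{2}\leq2a^{2}+2b^{2}$ then gives
\[
\sqrt{\bv_{t,i}}\leq\sqrt{2\sum_{s=1}^{t}\bxi_{s,i}^{2}+2\bu_{t,i}}\leq\sqrt{2\sum_{s=1}^{t}\bxi_{s,i}^{2}}+\sqrt{2\bu_{t,i}},
\]
where the second step uses subadditivity of $\sqrt{\cdot}$.

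Next we pass from the $\ell_{2}$ norm to the $\ell_{\p}$ norm on the noise sequence. Since $\p\in\left(1,2\right]$, the monotonicity of $\ell_{r}$ norms on $\R^{t}$ (namely $\left\Vert \cdot\right\Vert _{2}\leq\left\Vert \cdot\right\Vert _{\p}$) gives
\[
\sqrt{\sum_{s=1}^{t}\bxi_{s,i}^{2}}\leq\left(\sum_{s=1}^{t}\left|\bxi_{s,i}\right|^{\p}\right)^{\frac{1}{\p}}.
\]

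Then we take expectations. Because $x\mapsto x^{1/\p}$ is concave, Jensen's (equivalently H\"{o}lder's) inequality yields
\[
\E\left[\left(\sum_{s=1}^{t}\left|\bxi_{s,i}\right|^{\p}\right)^{\frac{1}{\p}}\right]\leq\left(\sum_{s=1}^{t}\E\left[\left|\bxi_{s,i}\right|^{\p}\right]\right)^{\frac{1}{\p}}.
\]
To bound each term, apply the tower property with Assumption \ref{assu:heavy}: $\E\left[\left|\bxi_{s,i}\right|^{\p}\right]=\E\left[\E_{s-1}\left[\left|\bxi_{s,i}\right|^{\p}\right]\right]\leq\bsig_{i}^{\p}$. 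The sum is therefore at most $t\bsig_{i}^{\p}$, so the right-hand side is at most $\bsig_{i}t^{\frac{1}{\p}}$.

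Combining these steps gives $\E\left[\sqrt{\bv_{t,i}}\right]\leq\sqrt{2}\bsig_{i}t^{\frac{1}{\p}}+\E\left[\sqrt{2\bu_{t,i}}\right]$, which is the claim. No step is a real obstacle. The only point needing care is the norm comparison, which requires $\p\leq2$; this holds by Assumption \ref{assu:heavy}.
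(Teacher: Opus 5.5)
Your proposal is correct and follows the paper's proof essentially step for step. Both arguments split $\bg_{s,i}=\bxi_{s,i}+\nabla_{i}f(\bx_{s})$ with $(a+b)^{2}\leq2a^{2}+2b^{2}$, use subadditivity of the square root and the comparison $\left\Vert \cdot\right\Vert _{2}\leq\left\Vert \cdot\right\Vert _{\p}$, and finish with H\"{o}lder/Jensen and Assumption \ref{assu:heavy}; your explicit tower-property step, passing from the conditional moment bound to the unconditional one, is left implicit in the paper and you handle it correctly.
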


\begin{proof}
By the definition of $\bv_{t,i}$, we have
\begin{align*}
\sqrt{\bv_{t,i}} & =\sqrt{\sum_{s=1}^{t}\bg_{s,i}^{2}}=\sqrt{\sum_{s=1}^{t}(\bxi_{s,i}+\nabla_{i}f(\bx_{s}))^{2}}\leq\sqrt{2\sum_{s=1}^{t}\bxi_{s,i}^{2}+2\sum_{s=1}^{t}(\nabla_{i}f(\bx_{s}))^{2}}\\
 & \leq\sqrt{2\sum_{s=1}^{t}\bxi_{s,i}^{2}}+\sqrt{2\bu_{t,i}}\leq\sqrt{2}\left(\sum_{s=1}^{t}\left|\bxi_{s,i}\right|^{\p}\right)^{\frac{1}{\p}}+\sqrt{2\bu_{t,i}},
\end{align*}
where the last step is due to $\left\Vert \cdot\right\Vert _{2}\leq\left\Vert \cdot\right\Vert _{\p}$
when $\p\in\left[1,2\right]$. By H\"{o}lder's inequality, we conclude
\[
\E\left[\sqrt{\bv_{t,i}}\right]\leq\sqrt{2}\left(\E\left[\sum_{s=1}^{t}\left|\bxi_{s,i}\right|^{\p}\right]\right)^{\frac{1}{\p}}+\E\left[\sqrt{2\bu_{t,i}}\right]\overset{\text{Assumption }\ref{assu:heavy}}{\leq}\sqrt{2}\bsig_{i}t^{\frac{1}{\p}}+\E\left[\sqrt{2\bu_{t,i}}\right].
\]
\end{proof}

Finally, we prove Lemma \ref{lem:AdaGrad-ln-bound}, which is also
inspired by \citet{pmlr-v97-ward19a}.
\begin{lem}
\label{lem:AdaGrad-ln-bound}Under Assumptions \ref{assu:smooth}
and \ref{assu:heavy}, for any $i\in\left[d\right]$, $\AdaGrad$
(Algorithm \ref{alg:AdaGrad}) guarantees
\[
\E\left[\ln\left(1+\frac{\bv_{T,i}}{\lambda^{2}}\right)\right]\leq2\ln\left(1+\frac{\sqrt{2}\bsig_{i}T^{\frac{1}{\p}}+\E\left[\sqrt{2\bu_{T,i}}\right]}{\lambda}\right)\leq2\ln\res_{T},
\]
where $\res_{T}\defeq1+\frac{\sqrt{2}\left\Vert \bsig\right\Vert _{\infty}T^{\frac{1}{\p}}+2\left\Vert \nabla f(\bx_{1})\right\Vert _{\infty}T^{\frac{1}{2}}+2\gamma\sqrt{\left\Vert \bL\right\Vert _{1}\left\Vert \bL\right\Vert _{\infty}}T^{\frac{3}{2}}}{\lambda}.$
\end{lem}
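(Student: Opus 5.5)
The first inequality follows from Jensen's inequality plus Lemma \ref{lem:AdaGrad-v}. The second needs a deterministic bound on $\sqrt{\bu_{T,i}}$ that comes from smoothness and the fact that every AdaGrad step has bounded length.

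\textbf{First inequality.} Since $\ln(1+x^{2})\leq2\ln(1+x)$ for $x\geq0$, we have
\[
\ln\left(1+\frac{\bv_{T,i}}{\lambda^{2}}\right)\leq2\ln\left(1+\frac{\sqrt{\bv_{T,i}}}{\lambda}\right).
\]
The map $y\mapsto\ln(1+y/\lambda)$ is concave and increasing. Jensen's inequality therefore gives
\[
\E\left[\ln\left(1+\frac{\bv_{T,i}}{\lambda^{2}}\right)\right]\leq2\ln\left(1+\frac{\E\left[\sqrt{\bv_{T,i}}\right]}{\lambda}\right).
\]
Plugging in $\E[\sqrt{\bv_{T,i}}]\leq\sqrt{2}\bsig_{i}T^{1/\p}+\E[\sqrt{2\bu_{T,i}}]$ from Lemma \ref{lem:AdaGrad-v} yields the first inequality.

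\textbf{Step-length bound.} For the second inequality it suffices, by monotonicity of $\ln$, to show the deterministic bound
\[
\sqrt{2\bu_{T,i}}\leq2\left\Vert\nabla f(\bx_{1})\right\Vert_{\infty}T^{1/2}+2\gamma\sqrt{\left\Vert\bL\right\Vert_{1}\left\Vert\bL\right\Vert_{\infty}}T^{3/2}.
\]
Once this holds, replacing $\bsig_{i}$ by $\left\Vert\bsig\right\Vert_{\infty}$ finishes the proof. The key observation is that each coordinate of an AdaGrad step is bounded: $|\bx_{s+1,j}-\bx_{s,j}|=\gamma|\bg_{s,j}|/(\lambda+\sqrt{\bv_{s,j}})\leq\gamma$, because $\bv_{s,j}\geq\bg_{s,j}^{2}$. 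Hence $\left\Vert\bx_{s}-\bx_{1}\right\Vert_{\bL}^{2}\leq\gamma^{2}(s-1)^{2}\left\Vert\bL\right\Vert_{1}$.

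\textbf{Gradient growth.} By Assumption \ref{assu:smooth},
\[
|\nabla_{i}f(\bx_{s})-\nabla_{i}f(\bx_{1})|\leq\sqrt{\bL_{i}}\left\Vert\nabla f(\bx_{s})-\nabla f(\bx_{1})\right\Vert_{1/\bL}\leq\sqrt{\bL_{i}}\left\Vert\bx_{s}-\bx_{1}\right\Vert_{\bL}\leq\gamma(s-1)\sqrt{\bL_{i}\left\Vert\bL\right\Vert_{1}}.
\]
Consequently,
\[
|\nabla_{i}f(\bx_{s})|\leq\left\Vert\nabla f(\bx_{1})\right\Vert_{\infty}+\gamma T\sqrt{\left\Vert\bL\right\Vert_{\infty}\left\Vert\bL\right\Vert_{1}}.
\]
Squaring, summing over $s\leq T$, and taking the square root gives
\[
\sqrt{\bu_{T,i}}\leq\sqrt{T}\left\Vert\nabla f(\bx_{1})\right\Vert_{\infty}+\gamma T^{3/2}\sqrt{\left\Vert\bL\right\Vert_{1}\left\Vert\bL\right\Vert_{\infty}}.
\]
Multiplying by $\sqrt{2}\leq2$ gives the displayed bound.

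The main point requiring care is the step-length bound $|\Delta\bx_{s,j}|\leq\gamma$, which makes the polynomial bound hold almost surely. The rest is bookkeeping with the weighted norms of Assumption \ref{assu:smooth}.
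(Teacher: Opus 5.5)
Your proof is correct and follows essentially the same route as the paper. The first inequality comes from $\ln(1+x^{2})\leq2\ln(1+x)$, Jensen's inequality and Lemma \ref{lem:AdaGrad-v}. The second comes from an almost-sure polynomial bound on $|\nabla_{i}f(\bx_{t})|$, obtained from coordinate-wise smoothness and the fact that every AdaGrad step has coordinates bounded by $\gamma$.

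The differences are cosmetic. You bound the per-coordinate displacement $|\bx_{s,j}-\bx_{1,j}|\leq\gamma(s-1)$ directly, where the paper sums step lengths in the $\bL$-norm. You also replace $s-1$ by $T$ before summing, where the paper uses $(a+b)^{2}\leq2a^{2}+2b^{2}$. Both arrive at the same $\res_{T}$.
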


\begin{proof}
Note that
\[
\E\left[\ln\left(1+\frac{\bv_{T,i}}{\lambda^{2}}\right)\right]=2\E\left[\ln\left(\sqrt{1+\frac{\bv_{T,i}}{\lambda^{2}}}\right)\right]\leq2\E\left[\ln\left(1+\frac{\sqrt{\bv_{T,i}}}{\lambda}\right)\right]\leq2\ln\left(1+\frac{\E\left[\sqrt{\bv_{T,i}}\right]}{\lambda}\right),
\]
where the last step is due to the concavity of $\ln x$. Next, we
invoke Lemma \ref{lem:AdaGrad-v} to obtain
\begin{equation}
\E\left[\ln\left(1+\frac{\bv_{T,i}}{\lambda^{2}}\right)\right]\leq2\ln\left(1+\frac{\sqrt{2}\bsig_{i}T^{\frac{1}{\p}}+\E\left[\sqrt{2\bu_{T,i}}\right]}{\lambda}\right).\label{eq:AdaGrad-ln-bound-1}
\end{equation}

Moreover, under Assumption \ref{assu:smooth}, we have almost surely,
for any $t\in\left[T\right]$,
\begin{align*}
\left|\nabla_{i}f(\bx_{t})-\nabla_{i}f(\bx_{1})\right| & \leq\sqrt{\bL_{i}}\left\Vert \nabla f(\bx_{t})-\nabla f(\bx_{1})\right\Vert _{\nicefrac{1}{\bL}}\leq\sqrt{\bL_{i}}\left\Vert \bx_{t}-\bx_{1}\right\Vert _{\bL}\leq\sqrt{\bL_{i}}\sum_{s=1}^{t-1}\left\Vert \bx_{s+1}-\bx_{s}\right\Vert _{\bL}\\
 & =\sqrt{\bL_{i}}\sum_{s=1}^{t-1}\left\Vert \frac{\gamma}{\lambda+\sqrt{\bv_{s}}}\bg_{s}\right\Vert _{\bL}=\gamma\sqrt{\bL_{i}}\sum_{s=1}^{t-1}\sqrt{\sum_{j=1}^{d}\frac{\bL_{j}\bg_{s,j}^{2}}{(\lambda+\sqrt{\bv_{s,j}})^{2}}}\\
 & \leq\gamma\sqrt{\bL_{i}}\sum_{s=1}^{t-1}\sqrt{\sum_{j=1}^{d}\bL_{j}}=\gamma\sqrt{\bL_{i}\left\Vert \bL\right\Vert _{1}}(t-1)\\
\Rightarrow\left|\nabla_{i}f(\bx_{t})\right| & \leq\left|\nabla_{i}f(\bx_{1})\right|+\gamma\sqrt{\bL_{i}\left\Vert \bL\right\Vert _{1}}(t-1).
\end{align*}
Hence, there is almost surely
\begin{align}
\sqrt{2\bu_{T,i}} & =\sqrt{2\sum_{t=1}^{T}(\nabla_{i}f(\bx_{t}))^{2}}\leq\sqrt{2\sum_{t=1}^{T}\left(\left|\nabla_{i}f(\bx_{1})\right|+\gamma\sqrt{\bL_{i}\left\Vert \bL\right\Vert _{1}}(t-1)\right)^{2}}\nonumber \\
 & \leq2\sqrt{\sum_{t=1}^{T}\left|\nabla_{i}f(\bx_{1})\right|^{2}+\sum_{t=1}^{T}\gamma^{2}\bL_{i}\left\Vert \bL\right\Vert _{1}(t-1)^{2}}\nonumber \\
 & \leq2\left|\nabla_{i}f(\bx_{1})\right|T^{\frac{1}{2}}+2\gamma\sqrt{\bL_{i}\left\Vert \bL\right\Vert _{1}}T^{\frac{3}{2}}.\label{eq:AdaGrad-ln-bound-2}
\end{align}

Finally, we plug (\ref{eq:AdaGrad-ln-bound-2}) back into (\ref{eq:AdaGrad-ln-bound-1})
to have
\begin{align*}
 & 2\ln\left(1+\frac{\sqrt{2}\bsig_{i}T^{\frac{1}{\p}}+\E\left[\sqrt{2\bu_{T,i}}\right]}{\lambda}\right)\\
\leq & 2\ln\left(1+\frac{\sqrt{2}\bsig_{i}T^{\frac{1}{\p}}+2\left|\nabla_{i}f(\bx_{1})\right|T^{\frac{1}{2}}+2\gamma\sqrt{\bL_{i}\left\Vert \bL\right\Vert _{1}}T^{\frac{3}{2}}}{\lambda}\right)\\
\leq & 2\ln\left(1+\frac{\sqrt{2}\left\Vert \bsig\right\Vert _{\infty}T^{\frac{1}{\p}}+2\left\Vert \nabla f(\bx_{1})\right\Vert _{\infty}T^{\frac{1}{2}}+2\gamma\sqrt{\left\Vert \bL\right\Vert _{1}\left\Vert \bL\right\Vert _{\infty}}T^{\frac{3}{2}}}{\lambda}\right)=2\ln\res_{T}.
\end{align*}
\end{proof}

\section{Algorithm-Dependent Lower Bound for $\protect\AdaGrad$\label{sec:AdaGrad-lb}}

This section provides the full statement of Theorem \ref{thm:main-AdaGrad-lb-1d}
and its proof.

\subsection{Full Theorem and Its Proof}
\begin{thm}[Full statement of Theorem \ref{thm:main-AdaGrad-lb-1d}]
\label{thm:AdaGrad-lb-1d}Let $d=1$, for any given $\Delta>0$,
$L>0$, $\p\in\left(1,2\right]$, $\sigma\geq0$, $0<\epsilon\leq\sqrt{2\Delta L}$,
$\sx_{1}\in\R$, $\gamma>0$, $\lambda\geq0$ satisfying $\lambda=0$
when $\sigma=0$, there exists a function $f:\R\to\R$ associated
with a function $g:\R\times\left\{ 0,1\right\} \to\R$ and a Bernoulli
distribution $\P$ on $\left\{ 0,1\right\} $ satisfying
\begin{enumerate}
\item $f(\sx_{1})-\inf_{\sx\in\R}f(\sx)\leq\Delta$ and $f$ is $L$-smooth;
\item $\E_{r\sim\P}\left[g(\sx;r)\right]=f'(\sx)$ and $\E_{r\sim\P}\left[\left|g(\sx;r)-f'(\sx)\right|^{\p}\right]\leq\sigma^{\p}$.
\end{enumerate}
Moreover, if using $\AdaGrad$ (Algorithm \ref{alg:AdaGrad}), with
initial point $\sx_{1}$, learning rate $\gamma$, and hyperparameter
$\lambda$, to optimize $f$ by interacting with $g$ (i.e., $g_{t}=g(\sx_{t};r_{t})$
where $r_{t}\sim\P$ is independent of the history), one must use
at least
\[
\Omega\left(\frac{\lambda\Delta/\gamma+\Delta^{2}/\gamma^{2}+\gamma^{2}L^{2}\ln^{2}\left(\frac{\gamma L(1+\sigma^{\frac{\p}{\p-1}}\epsilon^{-\frac{\p}{\p-1}})}{\lambda+\epsilon(1+\sigma^{\frac{\p}{\p-1}}\epsilon^{-\frac{\p}{\p-1}})}\right)}{\epsilon^{2}}+\frac{\left(\Delta^{2}/\gamma^{2}+\gamma^{2}L^{2}\ln^{2}\left(\frac{\gamma L(1+\sigma^{\frac{\p}{\p-1}}\epsilon^{-\frac{\p}{\p-1}})}{\lambda+\epsilon(1+\sigma^{\frac{\p}{\p-1}}\epsilon^{-\frac{\p}{\p-1}})}\right)\right)\sigma^{\frac{\p}{\p-1}}}{\epsilon^{\frac{3\p-2}{\p-1}}}\right)
\]
iterations to make $\E\left[\frac{1}{T}\sum_{t=1}^{T}\left|f'(\sx_{t})\right|\right]<\frac{\epsilon}{2}$
for small enough $\epsilon$ (see (\ref{eq:AdaGrad-lb-1d-eps}) for
the precise condition on $\epsilon$).
\end{thm}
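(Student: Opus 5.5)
We will build, for the given $\gamma,\lambda$, a one-dimensional hard instance in which, with constant probability, $f'(\sx_{t})\geq\epsilon$ holds for every $t\leq T$ whenever $T$ is below the claimed threshold. Then $\E[\frac{1}{T}\sum_{t}|f'(\sx_{t})|]\geq c\,\epsilon$, which contradicts the target accuracy $\epsilon/2$ once constants are fixed. The construction follows \citet{pmlr-v291-jiang25c} and \citet{pmlr-v258-hubler25a}. The lower bound is a maximum of three separate terms, and each is handled by its own instance: a function and oracle pair chosen for that term. Since $\Omega$ of a sum equals $\Omega$ of the largest summand, it is enough to prove that each term is a valid lower bound by itself.

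\textbf{Oracle.} We use a Bernoulli oracle $g(\sx;r)=f'(\sx)\cdot\frac{r}{\P(r=1)}$ with success probability $\rho$. This is unbiased. Its noise has $\p$-th moment roughly $\rho^{1-\p}|f'|^{\p}$, so near gradient level $\epsilon$ we may take $\rho\asymp\min\{1,(\epsilon/\sigma)^{\frac{\p}{\p-1}}\}$. On the good event that few ones are drawn, each successful query returns a magnitude of about $\epsilon/\rho$, and $\sqrt{v_{t}}$ grows like $(\epsilon/\rho)\sqrt{\#\text{successes}}$. After $T$ steps there are about $\rho T$ successes, so the accumulated displacement is of order $\gamma\sqrt{\rho T}$ instead of the noiseless $\gamma\sqrt{T}$. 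Equivalently, reaching a given displacement requires the iteration count to be multiplied by $1/\rho\asymp1+\sigma^{\frac{\p}{\p-1}}\epsilon^{-\frac{\p}{\p-1}}$. This factor accounts for the second summand. The $\lambda$-dependence comes from the denominator $\lambda+\epsilon/\rho$.

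\textbf{Three instances.} Each uses a quadratic or Huber-type piece with slope kept $\geq\epsilon$ along the region the iterates can reach.
\begin{enumerate}
\item \emph{The $\Delta^{2}/\gamma^{2}$ and $\lambda\Delta/\gamma$ terms.} Take a linear function of slope $2\epsilon$ on an interval of length about $\Delta/\epsilon$, smoothed at the ends so that the function is $L$-smooth with gap $\leq\Delta$. The iterates must travel distance $\Delta/\epsilon$. The total movement is at most $\gamma\sum_{t}g_{t}/(\lambda+\sqrt{v_{t}})$, which is $\lesssim\gamma\sqrt{\#\text{successes}}+\gamma T\epsilon/\lambda$, and the travel requirement gives the bound.
\item \emph{The $\gamma^{2}L^{2}\ln^{2}$ term.} Take a quadratic $\frac{L}{2}\sx^{2}$ started at distance where the slope is $\gtrsim\gamma L$. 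For large $\gamma L$ relative to $\sqrt{v}$, AdaGrad overshoots and oscillates. Following the analysis of Jiang et al., the effective step $\gamma/\sqrt{v_{t}}$ must fall below $1/L$ before the iterates contract, so $\sqrt{v_{t}}$ must grow to $\gamma L$. During this phase $|f'|$ grows geometrically and then decays, and the log factor counts the doubling rounds needed to go from gradient scale $\epsilon/\rho$ (plus $\lambda$) up to $\gamma L/\rho$. Squaring arises because $v_{t}$ is a sum of squares.
\end{enumerate}

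\textbf{Main obstacle.} The hardest step is this second regime. We must track the random recursion $\sx_{t+1}=\sx_{t}(1-\gamma L\frac{r_{t}/\rho}{\lambda+\sqrt{v_{t}}})$ and show that, with constant probability, $|f'(\sx_{t})|\geq\epsilon$ throughout. I would first handle the noiseless case exactly, using a potential in $\ln\sqrt{v_{t}}$ and the numeric lemmas referenced in the paper, such as Lemma \ref{lem:AdaGrad-lb-1d-numeric-2}. I would then transfer this to the noisy case by conditioning on the successful rounds, which are a Binomial$(T,\rho)$ count concentrated around $\rho T$ by Chebyshev. Finally, a smallness condition on $\epsilon$ is imposed so that all the constructions fit within $\Delta$ and $L$.
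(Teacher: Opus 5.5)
Your first instance is a valid alternative source of the $\lambda\Delta/\gamma$ and $\Delta^{2}/\gamma^{2}$ terms. Taking the worst of several instances is also allowed, since $f$ may depend on all the given parameters. One fix is needed: bound the displacement after $R$ successes by $\sum_{k\le R}\gamma/(a+\sqrt{k})\le2\gamma(\sqrt{a^{2}+R}-a)$ with $a=\lambda\rho/(2\epsilon)$. The \emph{sum} bound $\gamma\sqrt{R}+\gamma T\epsilon/\lambda$ you wrote only yields the smaller of the two terms.

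\textbf{The gap is the $\gamma^{2}L^{2}\ln^{2}$ term, which a fixed quadratic cannot produce.} With $\lambda=0$ and your oracle on $\frac{L}{2}x^{2}$, every successful step is $x_{t+1}=x_{t}(1-\gamma/S_{t})$, where $S_{t}^{2}$ is the sum of $x_{s}^{2}$ over successful rounds $s\le t$; the factor $\rho$ cancels.
\begin{itemize}
\item If $|f'(x_{1})|\ge\gamma L$, then $S_{t}\ge|x_{1}|\ge\gamma$ and $1-\gamma/S_{t}\in[0,1)$, so there is no overshoot at all.
\item If $|f'(x_{1})|\approx\epsilon$, the first successful step has length exactly $\gamma$ and $S_{t}$ jumps to $\approx\gamma$. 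Thereafter $|1-\gamma/S_{t}|\approx\epsilon/(\gamma L)$, so after $O(1)$ successes $|f'|<\epsilon$.
\end{itemize}
There are no ``doubling rounds''. The phase you actually need is about $(\gamma L/\epsilon)^{2}$ successful steps during which the observed gradients stay at level $\epsilon$. Then $v_{t}\approx\epsilon^{2}t$ while the steps $\approx\gamma/\sqrt{t}$ remain much larger than $\epsilon/L$. This is impossible when $f'$ is linear: a step of length $\delta\gg\epsilon/L$ from a point with $f'=-\epsilon$ lands where $|f'|\approx L\delta$.

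Your oracle is also inadmissible there. Multiplicative Bernoulli noise has $p$-th moment $\asymp\rho^{1-p}|f'|^{p}$. At gradient scale $\gamma L\gg\epsilon$, with $\rho$ tuned to $\epsilon$, this exceeds $\sigma^{p}$.

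\textbf{What is missing is the algorithm-dependent construction the paper uses.} It follows Lemma 20 of H\"{u}bler et al.\ and Lemma 15 of Jiang et al.; the latter bound is of this type, not a quadratic.
\begin{itemize}
\item Take $q\asymp\min\{1,(\epsilon/\sigma)^{\frac{p}{p-1}}\}$ and $\delta_{s}=\gamma/(\lambda q/\epsilon+\sqrt{s})$. If every nonzero observed gradient equals $-\epsilon/q$, then after $k$ successes AdaGrad sits exactly at $y_{k+1}=x_{1}+\sum_{s\le k}\delta_{s}$.
\item Build $f$ with $f'(y_{t})=-\epsilon$ and a tent-shaped $L$-Lipschitz $f'$ on each $[y_{t},y_{t+1}]$. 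Then $f(y_{t})=\Delta-\epsilon\sum_{s<t}\delta_{s}+\frac{L}{4}\sum_{s<t}\delta_{s}^{2}$.
\item The iterates hop over the bumps; while $\delta_{t}>4\epsilon/L$, $f$ even increases along the path.
\item Compare $\epsilon\sum_{s\le T}\delta_{s}\le2\epsilon\gamma(\sqrt{a^{2}+T}-a)$, with $a=\lambda q/\epsilon$, against $\Delta+\frac{L}{4}\sum\delta_{s}^{2}$, where $\frac{L}{4}\sum\delta_{s}^{2}\gtrsim L\gamma^{2}\ln(\cdot)$. Via Lemma~\ref{lem:AdaGrad-lb-1d-numeric-1}, this single instance yields the $\Delta^{2}/\gamma^{2}$, $\lambda\Delta/\gamma$ and $\gamma^{2}L^{2}\ln^{2}$ contributions together.
\item The noise is placed only at the points $y_{t}$, and the oracle is exact elsewhere. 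So the moment condition is checked only where $|f'|=\epsilon$, even though $|f'|$ can be of order $\gamma L$ between those points.
\item Markov's inequality on the success count then forces $T>(T_{\star}-1)/(2q)$. This supplies the factor $1/q\asymp1+\sigma^{\frac{p}{p-1}}\epsilon^{-\frac{p}{p-1}}$ for all terms at once.
\end{itemize}
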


\begin{rem}
The technical condition $\lambda=0$ when $\sigma=0$ is imposed to
ensure that the second and third inequalities in (\ref{eq:AdaGrad-lb-1d-eps})
hold in the deterministic case. In fact, it suffices to require $\lambda\leq\O(\epsilon)$
when $\sigma=0$, but we set $\lambda=0$ for simplicity.
\end{rem}

\begin{proof}
In the proof, we write
\begin{equation}
q\defeq\frac{1}{\left[1+\frac{\p-1}{4}\left(\frac{2\sigma}{\epsilon}\right)^{\p}\right]^{\frac{1}{\p-1}}}\in\left[0,1\right].\label{eq:AdaGrad-lb-1d-q}
\end{equation}
Moreover, let
\begin{equation}
\delta_{t}\defeq\frac{\gamma}{\frac{\lambda q}{\epsilon}+\sqrt{t}},\label{eq:AdaGrad-lb-1d-delta}
\end{equation}
and
\begin{equation}
T_{\star}\defeq\inf\left\{ T\in\N:\Delta-\epsilon\sum_{t=1}^{T}\delta_{t}+\frac{L}{4}\sum_{t=1}^{T}\delta_{t}^{2}<\frac{\epsilon^{2}}{2L}\right\} .\label{eq:AdaGrad-lb-1d-Tstar}
\end{equation}
Now, let us consider the function $f$ constructed in Lemma \ref{lem:AdaGrad-lb-1d-construction}
and the following function $g:\R\times\left\{ 0,1\right\} \to\R$,
\begin{equation}
g(\sx;r)=\begin{cases}
f'(\sx) & \sx\notin\left\{ \sy_{1},\mydots,\sy_{T_{\star}}\right\} \\
\frac{r}{q}f'(x) & x\in\left\{ \sy_{1},\mydots,\sy_{T_{\star}}\right\} 
\end{cases},\label{eq:AdaGrad-lb-1d-g}
\end{equation}
where we recall that $\sy_{t}=\sx_{1}+\sum_{s=1}^{t-1}\delta_{s},\forall t\in\left[T_{\star}\right]$
is introduced in Lemma \ref{lem:AdaGrad-lb-1d-construction} satisfying
\begin{equation}
f'(y_{t})=-\epsilon,\forall t\in\left[T_{\star}\right].\label{eq:AdaGrad-lb-1d-f'(y)}
\end{equation}

According to Lemma \ref{lem:AdaGrad-lb-1d-construction}, we know
\begin{eqnarray*}
f(\sx_{1})-\inf_{\sx\in\R}f(\sx)\leq\Delta & \text{and} & f\text{ is }L\text{-smooth}.
\end{eqnarray*}
Next, let $\P$ be the Bernoulli distribution with the parameter $q$
given in (\ref{eq:AdaGrad-lb-1d-q}), i.e.,
\begin{eqnarray}
\P\left[r=0\right]=1-q & \text{and} & \P\left[r=1\right]=q.\label{eq:AdaGrad-lb-1d-prob}
\end{eqnarray}
One can find that $\E_{r\sim\P}\left[g(\sx;r)\right]\overset{(\ref{eq:AdaGrad-lb-1d-g}),(\ref{eq:AdaGrad-lb-1d-prob})}{=}f'(\sx)$
and $\E_{r\sim\P}\left[\left|g(\sx;r)-f'(\sx)\right|^{\p}\right]\overset{(\ref{eq:AdaGrad-lb-1d-g})}{=}0\leq\sigma^{\p}$
if $\sx\notin\left\{ \sy_{1},\mydots,\sy_{T_{\star}}\right\} $. If
$x\in\left\{ \sy_{1},\mydots,\sy_{T_{\star}}\right\} $, we know
\begin{align*}
\E_{r\sim\P}\left[\left|g(\sx;r)-f'(\sx)\right|^{\p}\right] & \overset{(\ref{eq:AdaGrad-lb-1d-g}),(\ref{eq:AdaGrad-lb-1d-prob})}{=}\left|f'(x)\right|^{\p}(1-q)+\left|\frac{f'(x)}{q}-f'(x)\right|^{\p}q\overset{(\ref{eq:AdaGrad-lb-1d-f'(y)})}{=}\left(1-q\right)\frac{q^{\p-1}+(1-q)^{\p-1}}{q^{\p-1}}\epsilon^{\p}\\
 & \overset{(a)}{\leq}\left(1-q\right)\frac{2^{2-\p}\epsilon^{\p}}{q^{\p-1}}\overset{(b)}{\leq}\frac{\left(1-q^{\p-1}\right)2^{2-\p}\epsilon^{\p}}{\left(\p-1\right)q^{\p-1}}\overset{(\ref{eq:AdaGrad-lb-1d-q})}{=}\sigma^{\p},
\end{align*}
where $(a)$ is by $\frac{q^{\p-1}+(1-q)^{\p-1}}{2}\leq\frac{1}{2^{\p-1}}$
due to the concavity of $\sx^{\p-1}$ (since $\p-1\in\left(0,1\right]$)
and $(b)$ holds by $1-q\leq\frac{1-q^{\p-1}}{\p-1},\forall q\in\left[0,1\right],\p\in\left(1,2\right]$.
Therefore, we know
\begin{eqnarray*}
\E_{r\sim\P}\left[g(\sx;r)\right]=f'(\sx) & \text{and} & \E_{r\sim\P}\left[\left|g(\sx;r)-f'(\sx)\right|^{\p}\right]\leq\sigma^{\p}.
\end{eqnarray*}

Suppose one runs $\AdaGrad$ to optimize $f$ by interacting with
$g$. Let us define
\begin{eqnarray}
R_{t}\defeq\sum_{s=1}^{t}r_{t},\forall t\in\N & \text{and} & E_{T}\defeq\left\{ R_{T}\leq T_{\star}-1\right\} ,\forall T\in\N.\label{eq:AdaGrad-lb-1d-R-E}
\end{eqnarray}
Given $T\in\N$, for any sample path in $E_{T}$, we use induction
to show
\begin{equation}
\left\{ \sx_{1},\mydots,\sx_{t}\right\} \subseteq\left\{ \sy_{1},\mydots,\sy_{T_{\star}}\right\} ,\forall t\in\left[T\right].\label{eq:AdaGrad-lb-1d-hypothesis}
\end{equation}
For $t=1$, (\ref{eq:AdaGrad-lb-1d-hypothesis}) is true since $\sy_{1}=\sx_{1}$.
Suppose (\ref{eq:AdaGrad-lb-1d-hypothesis}) holds for some $t\leq T-1$,
then we know 
\[
g_{s}=g(\sx_{s};r_{s})\overset{(\ref{eq:AdaGrad-lb-1d-g}),(\ref{eq:AdaGrad-lb-1d-f'(y)}),(\ref{eq:AdaGrad-lb-1d-hypothesis})}{=}-\frac{r_{s}}{q}\epsilon,\forall s\in\left[t\right],
\]
which implies that 
\[
v_{s}=\sum_{\ell=1}^{s}g_{\ell}^{2}=\frac{\epsilon^{2}}{q^{2}}\sum_{\ell=1}^{s}r_{\ell}^{2}=\frac{\epsilon^{2}}{q^{2}}\sum_{\ell=1}^{s}r_{\ell}\overset{(\ref{eq:AdaGrad-lb-1d-R-E})}{=}\frac{\epsilon^{2}}{q^{2}}R_{s},\forall s\in\left[t\right].
\]
Therefore, by the update rule of $\AdaGrad$,
\[
\sx_{t+1}=\sx_{1}-\sum_{s=1}^{t}\frac{\gamma}{\lambda+\sqrt{v_{s}}}g_{s}=\sx_{1}+\sum_{s=1}^{t}\frac{\gamma r_{s}}{\frac{\lambda q}{\epsilon}+\sqrt{R_{s}}}\overset{(\ref{eq:AdaGrad-lb-1d-delta})}{=}\sx_{1}+\sum_{s=1}^{t}\delta_{R_{s}}r_{s}=\sx_{1}+\sum_{s=1}^{R_{t}}\delta_{s}\in\left\{ \sy_{1},\mydots,\sy_{T_{\star}}\right\} ,
\]
where the last step is due to $R_{t}\leq R_{T}\leq T_{\star}-1$ and
$\sy_{t}=\sx_{1}+\sum_{s=1}^{t-1}\delta_{s},\forall t\in\left[T_{\star}\right]$.
Thus, the induction is complete.

If $T\leq\frac{T_{\star}-1}{2q}$, by Markov's inequality, we have
\[
\P\left[R_{T}>2qT\right]\leq\frac{\E\left[R_{T}\right]}{2qT}=\frac{1}{2}\Rightarrow\P\left[E_{T}\right]\geq\P\left[R_{T}\leq2qT\right]\geq\frac{1}{2},
\]
which implies that
\[
\E\left[\frac{1}{T}\sum_{t=1}^{T}\left|f'(\sx_{t})\right|\right]\geq\E\left[\frac{1}{T}\sum_{t=1}^{T}\left|f'(\sx_{t})\right|\mid E_{T}\right]\P\left[E_{T}\right]\geq\frac{\epsilon}{2},
\]
where the last step is due to $\left|f'(\sx_{t})\right|=\epsilon,\forall t\in\left[T\right]$
since $\left\{ \sx_{1},\mydots,\sx_{T}\right\} \subseteq\left\{ \sy_{1},\mydots,\sy_{T_{\star}}\right\} $
(see (\ref{eq:AdaGrad-lb-1d-hypothesis})) and $\left|f'(\sy_{t})\right|=\epsilon,\forall t\in\left[T_{\star}\right]$
(see (\ref{eq:AdaGrad-lb-1d-f'(y)})). Therefore, to make $\E\left[\frac{1}{T}\sum_{t=1}^{T}\left|f'(\sx_{t})\right|\right]<\frac{\epsilon}{2}$,
one must have
\begin{equation}
T>\frac{T_{\star}-1}{2q}.\label{eq:AdaGrad-lb-1d-T}
\end{equation}

Finally, let us assume $\epsilon$ is small enough to satisfy\footnote{The first condition is trivial to satisfy. To make the second one
hold, since $\ln(1+\frac{1}{x})\geq\frac{1}{x+1}$, it suffices to
ensure that $\frac{16\epsilon}{\gamma L}(\left(\frac{\lambda q}{\epsilon}\right)^{2}+2)\leq1$,
which is possible when $\epsilon$ is small enough due to the definition
of $q$ (see (\ref{eq:AdaGrad-lb-1d-q})). The third one is also true
when $\epsilon$ is small enough by the definition of $q$.}
\begin{eqnarray}
\epsilon\leq\sqrt{\Delta L}, & \ln\left(1+\frac{1}{\left(\frac{\lambda q}{\epsilon}\right)^{2}+1}\right)\geq\frac{16\epsilon}{\gamma L}, & \frac{16\epsilon}{\gamma L}\sqrt{\left(\frac{\lambda q}{\epsilon}\right)^{2}+1}\leq\frac{\ln c}{\sqrt{2c}},\label{eq:AdaGrad-lb-1d-eps}
\end{eqnarray}
where $c\in\left[3.92,3.93\right]$ is the unique positive solution
to $2c=(1+c)\ln(1+c)$. Note that
\begin{align*}
T_{\star} & \overset{(\ref{eq:AdaGrad-lb-1d-delta}),(\ref{eq:AdaGrad-lb-1d-Tstar})}{=}\inf\left\{ T\in\N:\Delta-\epsilon\gamma\sum_{t=1}^{T}\frac{1}{\frac{\lambda q}{\epsilon}+\sqrt{t}}+\frac{\gamma^{2}L}{4}\sum_{t=1}^{T}\frac{1}{\left(\frac{\lambda q}{\epsilon}+\sqrt{t}\right)^{2}}<\frac{\epsilon^{2}}{2L}\right\} \\
 & \overset{(\ref{eq:AdaGrad-lb-1d-eps})}{\geq}\inf\left\{ T\in\N:\frac{\Delta}{2}-\epsilon\gamma\sum_{t=1}^{T}\frac{1}{\frac{\lambda q}{\epsilon}+\sqrt{t}}+\frac{\gamma^{2}L}{4}\sum_{t=1}^{T}\frac{1}{\left(\frac{\lambda q}{\epsilon}+\sqrt{t}\right)^{2}}<0\right\} \\
 & =\inf\left\{ T\in\N:\frac{\Delta}{2\gamma\epsilon}+\frac{\gamma L}{4\epsilon}\sum_{t=1}^{T}\frac{1}{\left(\frac{\lambda q}{\epsilon}+\sqrt{t}\right)^{2}}<\sum_{t=1}^{T}\frac{1}{\frac{\lambda q}{\epsilon}+\sqrt{t}}\right\} .
\end{align*}
Moreover, we observe that
\begin{align*}
\sum_{t=1}^{T}\frac{1}{\frac{\lambda q}{\epsilon}+\sqrt{t}} & \leq\int_{0}^{T}\frac{\d t}{\frac{\lambda q}{\epsilon}+\sqrt{t}}\leq\int_{0}^{T}\frac{\d t}{\sqrt{\left(\frac{\lambda q}{\epsilon}\right)^{2}+t}}=2\sqrt{\left(\frac{\lambda q}{\epsilon}\right)^{2}+T}-\frac{2\lambda q}{\epsilon},\\
\sum_{t=1}^{T}\frac{1}{\left(\frac{\lambda q}{\epsilon}+\sqrt{t}\right)^{2}} & \geq\int_{1}^{T+1}\frac{\d t}{\left(\frac{\lambda q}{\epsilon}+\sqrt{t}\right)^{2}}\geq\int_{1}^{T+1}\frac{\d t}{2\left(\left(\frac{\lambda q}{\epsilon}\right)^{2}+t\right)}=\frac{\ln\left(1+\frac{T}{\left(\frac{\lambda q}{\epsilon}\right)^{2}+1}\right)}{2},
\end{align*}
which together imply that
\begin{align}
T_{\star} & \geq\inf\left\{ T\in\N:\frac{\Delta}{2\gamma\epsilon}+\frac{\gamma L}{8\epsilon}\ln\left(1+\frac{T}{\left(\frac{\lambda q}{\epsilon}\right)^{2}+1}\right)<2\sqrt{\left(\frac{\lambda q}{\epsilon}\right)^{2}+T}-\frac{2\lambda q}{\epsilon}\right\} \nonumber \\
 & \geq\frac{\inf\left\{ T\in\N:\frac{\Delta}{4\gamma\epsilon}<\sqrt{\left(\frac{\lambda q}{\epsilon}\right)^{2}+T}-\frac{\lambda q}{\epsilon}\right\} +\inf\left\{ T\in\N:\frac{\ln\left(1+\frac{T}{\left(\frac{\lambda q}{\epsilon}\right)^{2}+1}\right)}{\sqrt{T}}<\frac{16\epsilon}{\gamma L}\right\} }{2}\nonumber \\
 & \geq\frac{\left\lceil \frac{\lambda\Delta q}{2\gamma\epsilon^{2}}+\frac{\Delta^{2}}{16\gamma^{2}\epsilon^{2}}\right\rceil +\left\lceil \frac{\gamma^{2}L^{2}}{64\epsilon^{2}}\ln^{2}\left(\frac{\gamma L}{8\epsilon\sqrt{\left(\frac{\lambda q}{\epsilon}\right)^{2}+1}}\right)\right\rceil }{2},\label{eq:AdaGrad-lb-1d-Tstar-lb}
\end{align}
where the last step is due to (\ref{eq:AdaGrad-lb-1d-eps}) and Lemma
\ref{lem:AdaGrad-lb-1d-numeric-1}. Therefore, we obtain
\begin{align*}
T & \overset{(\ref{eq:AdaGrad-lb-1d-T}),(\ref{eq:AdaGrad-lb-1d-Tstar-lb})}{\geq}\Omega\left(\frac{\lambda\Delta}{\gamma\epsilon^{2}}+\frac{\Delta^{2}/\gamma^{2}+\gamma^{2}L^{2}\ln^{2}\left(\frac{\gamma L}{\lambda q+\epsilon}\right)}{\epsilon^{2}q}\right)\\
 & \overset{(\ref{eq:AdaGrad-lb-1d-q})}{=}\Omega\left(\frac{\lambda\Delta/\gamma+\Delta^{2}/\gamma^{2}+\gamma^{2}L^{2}\ln^{2}\left(\frac{\gamma L(1+\sigma^{\frac{\p}{\p-1}}\epsilon^{-\frac{\p}{\p-1}})}{\lambda+\epsilon(1+\sigma^{\frac{\p}{\p-1}}\epsilon^{-\frac{\p}{\p-1}})}\right)}{\epsilon^{2}}+\frac{\left(\Delta^{2}/\gamma^{2}+\gamma^{2}L^{2}\ln^{2}\left(\frac{\gamma L(1+\sigma^{\frac{\p}{\p-1}}\epsilon^{-\frac{\p}{\p-1}})}{\lambda+\epsilon(1+\sigma^{\frac{\p}{\p-1}}\epsilon^{-\frac{\p}{\p-1}})}\right)\right)\sigma^{\frac{\p}{\p-1}}}{\epsilon^{\frac{3\p-2}{\p-1}}}\right).
\end{align*}
\end{proof}

\subsection{Helpful Lemmas}

We provide three technical lemmas used in proving the algorithm-dependent
lower bound for $\AdaGrad$.

We first prove Lemma \ref{lem:AdaGrad-lb-1d-construction}, which
is essentially Lemma 20 of \citet{pmlr-v258-hubler25a} (see also
Lemma 15 of \citet{pmlr-v291-jiang25c}).
\begin{lem}
\label{lem:AdaGrad-lb-1d-construction}Given $\Delta>0$, $L>0$,
$0<\epsilon\leq\sqrt{2\Delta L}$, $\sx_{1}\in\R$, and a nonnegative
sequence $\left\{ \delta_{t}\right\} _{t=1}^{\infty}$, let 
\[
T_{\star}\defeq\inf\left\{ T\in\N:\Delta-\epsilon\sum_{t=1}^{T}\delta_{t}+\frac{L}{4}\sum_{t=1}^{T}\delta_{t}^{2}<\frac{\epsilon^{2}}{2L}\right\} ,
\]
and $\sy_{t}\defeq\sx_{1}+\sum_{s=1}^{t-1}\delta_{s},\forall t\in\left[T_{\star}\right]$,
then there exists a function $f:\R\to\R$ such that:
\begin{eqnarray*}
f(\sx_{1})-\inf_{\sx\in\R}f(\sx)\leq\Delta; & f\text{ is }L\text{-smooth}; & f'(\sy_{t})=-\epsilon,\forall t\in\left[T_{\star}\right].
\end{eqnarray*}
\end{lem}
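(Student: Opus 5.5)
We construct $f$ directly in the manner of Lemma 20 of \citet{pmlr-v258-hubler25a}: build $f'$ as a piecewise-linear function with slopes in $[-L,L]$, so that $f$ is automatically $L$-smooth. We then control the total descent through the quantity in the definition of $T_{\star}$. Without loss of generality take $\sx_{1}=0$. By definition of $T_{\star}$, for every $T\le T_{\star}-1$ we have
\[
\Delta-\epsilon\sum_{t=1}^{T}\delta_{t}+\frac{L}{4}\sum_{t=1}^{T}\delta_{t}^{2}\geq\frac{\epsilon^{2}}{2L}.
\]
This is the budget that will pay for the descent accumulated along $\sy_{1},\dots,\sy_{T_{\star}}$ and for the final ``landing''.

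\textbf{Construction on each interval.} On each interval $[\sy_{t},\sy_{t+1}]$ with $t\le T_{\star}-1$ (length $\delta_{t}$), set $f'(\sy_{t})=f'(\sy_{t+1})=-\epsilon$. Between the endpoints, let $f'$ move toward $0$ with slope $\pm L$ and come back: a tent reaching $-\epsilon+L\delta_{t}/2$, or truncated at $0$ if $\delta_{t}>2\epsilon/L$. The integral of $f'$ over the interval, i.e.\ the change in $f$, is then at least
\[
-\epsilon\delta_{t}+\frac{L}{4}\delta_{t}^{2}
\]
in the untruncated case. When truncated, the change lies between this value and $0$, which still bounds the descent by $\epsilon\delta_{t}-\frac{L}{4}\delta_{t}^{2}$ from above whenever that expression is nonnegative, and by $0$ otherwise. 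The truncated case is the routine piece that needs care: I will check that $\max(0,\epsilon\delta-L\delta^{2}/4)$ dominates the true descent $\int_{0}^{\delta}\min(0,-\epsilon+L\min(s,\delta-s))\,\d s$. A short computation should confirm this, since the tent lies above $-\epsilon+L\min(s,\delta-s)$.

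\textbf{Tails outside $[\sy_{1},\sy_{T_{\star}}]$.} To the left of $\sy_{1}$, let $f'$ go from $-\epsilon$ to $0$ as $x$ decreases, using slope $L$, and then stay at $0$. Along this piece $f$ only increases as we move leftward, so it cannot lower the infimum. To the right of $\sy_{T_{\star}}$, let $f'$ rise from $-\epsilon$ to $0$ with slope $L$ and then stay $0$; this costs an extra descent of $\epsilon^{2}/(2L)$. Then $f$ is non-increasing to the right of the flat left part and constant after the landing, so
\[
\inf f=f(\sy_{T_{\star}})-\frac{\epsilon^{2}}{2L}.
\]

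\textbf{Total descent and the edge cases.} Summing over the intervals, the total descent is at most $\sum_{t=1}^{T_{\star}-1}\bigl(\epsilon\delta_{t}-\frac{L}{4}\delta_{t}^{2}\bigr)+\frac{\epsilon^{2}}{2L}$, which is $\le\Delta$ by the displayed inequality with $T=T_{\star}-1$. Two edge cases need attention. If $T_{\star}=1$, the condition $\epsilon\le\sqrt{2\Delta L}$ gives $\epsilon^{2}/(2L)\le\Delta$ directly. If $T_{\star}=\infty$, we let $f$ be defined on all the $\sy_{t}$; the infimum is then bounded using the same inequality for all $T$, taking limits, and there is no landing piece. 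Finally, $f'$ is continuous, piecewise linear, and has slopes bounded by $L$, hence it is $L$-Lipschitz and $f$ is $L$-smooth. By construction $f'(\sy_{t})=-\epsilon$ for every $t$.

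The main obstacle is the truncated-tent bookkeeping, together with ensuring that the per-interval bound works with the exact constant $L/4$ used in the definition of $T_{\star}$.
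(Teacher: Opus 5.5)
There is a genuine gap, and it comes from truncating the tents at $0$. On an interval with $\delta_t>2\epsilon/L$, your truncated tent $\min(0,-\epsilon+L\min(s,\delta_t-s))$ makes $f$ drop by $\epsilon^2/(2L)$ at each end and stay flat in between, so $f(y_t)-f(y_{t+1})=\epsilon^2/L$. However,
\[
\frac{\epsilon^{2}}{L}-\Bigl(\epsilon\delta_t-\frac{L}{4}\delta_t^{2}\Bigr)=\frac{L}{4}\Bigl(\delta_t-\frac{2\epsilon}{L}\Bigr)^{2}>0,
\]
so the truncated descent strictly exceeds $\max(0,\epsilon\delta_t-\frac{L}{4}\delta_t^2)$. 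The comparison you planned to verify goes the wrong way: since $\min(0,h)\le h$, the truncated tent lies \emph{below} the untruncated one. Truncation removes the positive part of $f'$ and therefore \emph{increases} the descent.

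There is also a second, independent problem in the summation step. Even a per-interval bound of $\max(0,a_t)$, with $a_t=\epsilon\delta_t-\frac{L}{4}\delta_t^2$, would only give a total of $\sum_t\max(0,a_t)$. The definition of $T_\star$ controls $\sum_t a_t$, in which intervals with $\delta_t>4\epsilon/L$ count as credit ($a_t<0$). A construction with $f'\le 0$ can never cash in that credit.

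Here is a concrete failure. Take $\epsilon=\sqrt{2\Delta L}$ and $\delta_1=4\epsilon/L$. Then $\Delta-\epsilon\delta_1+\frac{L}{4}\delta_1^2=\Delta\ge\epsilon^2/(2L)$, so $T_\star\ge 2$ and $f'(y_2)=-\epsilon$ is required. Yet your $f$ has $f(x_1)-f(y_2)=\epsilon^2/L=2\Delta>\Delta$.

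The missing idea is to \emph{not} truncate. Use the full tent reaching $-\epsilon+L\delta_t/2$ even when this value is positive, so that $f$ climbs back up on long intervals. Then
\[
f(y_{t+1})-f(y_t)=-\epsilon\delta_t+\frac{L}{4}\delta_t^2
\]
holds exactly, and hence $f(y_t)=\Delta-\epsilon\sum_{s<t}\delta_s+\frac{L}{4}\sum_{s<t}\delta_s^2\ge\epsilon^2/(2L)$ for every $t\le T_\star$, by the definition of $T_\star$ together with $\epsilon\le\sqrt{2\Delta L}$ for $t=1$.

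The price is that $f$ is no longer monotone, so you must bound the infimum inside each interval. On $[y_t,y_{t+1})$ the only dip below $f(y_t)$ is the initial descent, of depth at most $\epsilon^2/(2L)$. Hence the infimum there is at least $\min\{f(y_t)-\epsilon^2/(2L),f(y_{t+1})\}$. The right tail $f'(x)=-\epsilon+L(x-y_{T_\star})$ obeys the same bound, and the left tail is harmless. This gives $\inf f\ge\min_{t\le T_\star}f(y_t)-\epsilon^2/(2L)\ge 0$ with $f(x_1)=\Delta$.

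So the $\epsilon^2/(2L)$ slack in the definition of $T_\star$ pays for these intra-interval dips as well as for the final landing. This is exactly how the paper's proof proceeds.
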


\begin{proof}
For any $\delta\geq0$, let
\[
g_{\delta}(\sx)\defeq\begin{cases}
-\epsilon+L\sx & \sx\in\left[0,\delta/2\right]\\
-\epsilon+L\delta-L\sx & \sx\in\left(\delta/2,\delta\right]
\end{cases}.
\]
Now, we introduce 
\[
f'(\sx)\defeq\begin{cases}
-\epsilon & \sx<\sy_{1}\\
g_{\delta_{t}}(\sx-\sy_{t}) & \sx\in\left[\sy_{t},\sy_{t+1}\right),t\in\left[T_{\star}-1\right]\\
-\epsilon+L(\sx-\sy_{T_{\star}}) & \sx\geq\sy_{T_{\star}}
\end{cases},
\]
and 
\[
f(\sx)\defeq\Delta+\int_{\sy_{1}}^{\sx}f'(\sz)\d\sz.
\]
Note that $f$ is $L$-smooth and satisfies $f'(\sy_{t})=-\epsilon,\forall t\in\left[T_{\star}\right]$
by its definition. Thus, we only need to verify $f(\sx_{1})-\inf_{\sx\in\R}f(\sx)\leq\Delta$.
Note that $f(\sx_{1})=\Delta+\int_{\sy_{1}}^{\sx_{1}}f'(\sz)\d\sz=\Delta$
due to $\sy_{1}=\sx_{1}$, it remains to show $\inf_{\sx\in\R}f(\sx)\geq0$.

First, we can find that
\[
f(\sx)=\Delta+\epsilon(\sy_{1}-\sx),\forall\sx<\sy_{1}\Rightarrow\inf_{\sx<\sy_{1}}f(\sx)=f(\sy_{1}).
\]
Next, given $t\in\left[T_{\star}-1\right]$, we can find that
\[
\inf_{\sx\in\left[\sy_{t},\sy_{t+1}\right)}f(\sx)=\begin{cases}
\min\left\{ f(\sy_{t})-\frac{\epsilon^{2}}{2L},f(\sy_{t+1})\right\}  & \frac{\delta_{t}}{2}\geq\frac{\epsilon}{L}\\
f(\sy_{t+1}) & \frac{\delta_{t}}{2}<\frac{\epsilon}{L}
\end{cases}\geq\min\left\{ f(\sy_{t})-\frac{\epsilon^{2}}{2L},f(\sy_{t+1})\right\} .
\]
Finally, we know
\[
\inf_{\sx\geq\sy_{T_{\star}}}f(\sx)=f(\sy_{T_{\star}}+\epsilon/L)=f(\sy_{T_{\star}})-\frac{\epsilon^{2}}{2L}.
\]
The above three results together imply that
\[
\inf_{\sx\in\R}f(\sx)\geq\min_{t\in\left[T_{\star}\right]}f(\sy_{t})-\frac{\epsilon^{2}}{2L}.
\]
Now, we compute
\[
f(\sy_{t})=\Delta+\sum_{s=1}^{t-1}\int_{\sy_{s}}^{\sy_{s+1}}g_{\delta_{s}}(\sz-\sy_{s})\d\sz=\Delta-\epsilon\sum_{s=1}^{t-1}\delta_{s}+\frac{L}{4}\sum_{s=1}^{t-1}\delta_{s}^{2},\forall t\in\left[T_{\star}\right],
\]
which implies that, by the definition of $T_{\star}$,
\[
\min_{t\in\left[T_{\star}\right]}f(\sy_{t})-\frac{\epsilon^{2}}{2L}=\min_{t\in\left[T_{\star}\right]}\Delta-\epsilon\sum_{s=1}^{t-1}\delta_{s}+\frac{L}{4}\sum_{s=1}^{t-1}\delta_{s}^{2}-\frac{\epsilon^{2}}{2L}\geq0.
\]
\end{proof}

Next, Lemma \ref{lem:AdaGrad-lb-1d-numeric-1} provides a lower bound
for an important quantity used in the proof of Theorem \ref{thm:AdaGrad-lb-1d}.
\begin{lem}
\label{lem:AdaGrad-lb-1d-numeric-1}Let $c\in\left[3.92,3.93\right]$
denote the unique positive solution to $2c=(1+c)\ln(1+c)$, given
$A>0$ and $B\geq1$ satisfying $\ln\left(1+\frac{1}{B}\right)\geq A$
and $A\sqrt{B}\leq\frac{\ln c}{\sqrt{2c}}$, we have
\[
\inf\left\{ T\in\N:\frac{\ln\left(1+\frac{T}{B}\right)}{\sqrt{T}}<A\right\} \geq\left\lceil \frac{4}{A^{2}}\ln^{2}\left(\frac{2}{A\sqrt{B}}\right)\right\rceil .
\]
\end{lem}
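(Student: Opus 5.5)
The plan is to show directly that $h(T)\defeq\frac{\ln(1+T/B)}{\sqrt{T}}\geq A$ for every real $T\in[1,T_{0}]$, where
\[
T_{0}\defeq\frac{4}{A^{2}}\ell^{2},\qquad\ell\defeq\ln\left(\frac{2}{A\sqrt{B}}\right).
\]
Then any $T\in\N$ with $h(T)<A$ must satisfy $T>T_{0}$, and therefore $T\geq\lceil T_{0}\rceil$. The infimum is over a nonempty set since $h(T)\to0$; in any case an empty infimum equals $+\infty$, which is harmless. If $T_{0}<1$ the claim is trivial, because the infimum is at least $1\geq\lceil T_{0}\rceil$. So I will assume $T_{0}\geq1$.

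\textbf{Step 1 (shape of $h$).} Write $h(T)=\varphi(T/B)/\sqrt{B}$ with $\varphi(s)\defeq\ln(1+s)/\sqrt{s}$. A direct computation gives
\[
\varphi'(s)=\frac{2s-(1+s)\ln(1+s)}{2s^{3/2}(1+s)},
\]
so the sign of $\varphi'$ is the sign of $\psi(s)\defeq2s-(1+s)\ln(1+s)$. We have $\psi(0)=0$ and $\psi'(s)=1-\ln(1+s)$. Hence $\psi$ increases on $[0,e-1]$ and then decreases to $-\infty$, so it has exactly one positive zero, namely $c$. Consequently $\varphi$ increases on $(0,c]$ and decreases on $[c,\infty)$. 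In particular $h$ is quasi-concave on $(0,\infty)$, so on any interval $[1,T_{0}]$ its minimum is attained at an endpoint.

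\textbf{Step 2 (left endpoint).} At $T=1$ we have $h(1)=\ln(1+1/B)\geq A$, which is exactly the first hypothesis.

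\textbf{Step 3 (right endpoint).} Since $\sqrt{T_{0}}=2\ell/A$, we get
\[
h(T_{0})=\frac{A\ln(1+T_{0}/B)}{2\ell}.
\]
So $h(T_{0})\geq A$ is equivalent to $\ln(1+T_{0}/B)\geq2\ell=\ln\left(\frac{4}{A^{2}B}\right)$, i.e.\ to $1+T_{0}/B\geq\frac{4}{A^{2}B}$. This certainly holds once $T_{0}\geq4/A^{2}$, i.e.\ once $\ell\geq1$, i.e.\ once $A\sqrt{B}\leq2/e$. The second hypothesis gives $A\sqrt{B}\leq\frac{\ln c}{\sqrt{2c}}$. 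Using $c\in[3.92,3.93]$, this bound is roughly $1.37/2.8<0.5<2/e$, so $\ell\geq1$ as needed. This numeric check is the only place the second hypothesis is used, and confirming it cleanly (bounding $\ln c$ above and $c$ below on the given interval) is the one step needing care. The rest is routine.

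\textbf{Step 4 (conclusion).} By Steps 1--3 and quasi-concavity, $h(T)\geq A$ for all $T\in[1,T_{0}]$. Hence every $T\in\N$ with $h(T)<A$ satisfies $T>T_{0}$, and since $T$ is an integer, $T\geq\lceil T_{0}\rceil$. Taking the infimum yields the stated bound.
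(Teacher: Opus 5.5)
Your proof is correct, and it takes a different, more elementary route than the paper. The paper uses the monotonicity of $h$ only to discard $T\le\lfloor cB\rfloor$, where $h(T)\ge h(1)\ge A$. For $T\ge\lfloor cB\rfloor+1$ it then proceeds as follows:
\begin{itemize}
\item it relaxes $\ln(1+T/B)$ to $\ln(T/B)$ and rewrites the resulting condition as $-\frac{A\sqrt{T}}{2}\exp\bigl(-\frac{A\sqrt{T}}{2}\bigr)>-\frac{A\sqrt{B}}{2}$;
\item it invokes $A\sqrt{B}\le\frac{\ln c}{\sqrt{2c}}$ to check that $x=\frac{A}{2}\sqrt{\lfloor cB\rfloor+1}$ already satisfies $xe^{-x}\ge\frac{A\sqrt{B}}{2}$;
\item it deduces the intermediate bound $\frac{4}{A^{2}}\bigl[-W_{-1}\bigl(-\frac{A\sqrt{B}}{2}\bigr)\bigr]^{2}$ via the Lambert function, and finishes with $-W_{-1}(-x)\ge\ln\frac{1}{x}$.
\end{itemize}

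You instead exploit the full quasi-concavity of $h$ and only verify $h\ge A$ at the two endpoints $1$ and $T_0$, evaluating $h(T_0)$ in closed form. This dispenses with the Lambert function, the $\ln(T/B)$ relaxation, and the bookkeeping around $\lfloor cB\rfloor$.

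What the paper's route buys is an explicit, slightly larger intermediate threshold, which it then discards. What yours buys is brevity and a weaker requirement, since you only use $A\sqrt{B}\le 2/e$. In fact that condition already follows from the first hypothesis. The function $\sqrt{B}\ln(1+\frac{1}{B})$ is nonincreasing for $B\ge1$, so $A\sqrt{B}\le\ln 2<\frac{2}{e}$. Your argument therefore shows that the assumption $A\sqrt{B}\le\frac{\ln c}{\sqrt{2c}}$ is superfluous for this lemma.
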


\begin{proof}
Let $h(x)\defeq\frac{\ln\left(1+\frac{x}{B}\right)}{\sqrt{x}}$ for
$x>0$. We have
\[
h'(x)=\frac{B\left[\frac{2x}{B}-\left(1+\frac{x}{B}\right)\ln\left(1+\frac{x}{B}\right)\right]}{2x^{\frac{3}{2}}\left(B+x\right)}.
\]
We can find $h'(x)\geq0\Leftrightarrow\frac{2x}{B}-\left(1+\frac{x}{B}\right)\ln\left(1+\frac{x}{B}\right)\geq0\Leftrightarrow x\in\left(0,cB\right]$.
Therefore, $h(T)\geq h(1)=\ln\left(1+\frac{1}{B}\right)\geq A$ when
$T\in\left[\left\lfloor cB\right\rfloor \right]$, implying that
\begin{align*}
\inf\left\{ T\in\N:\frac{\ln\left(1+\frac{T}{B}\right)}{\sqrt{T}}<A\right\}  & =\inf\left\{ \left\lfloor cB\right\rfloor +1\leq T\in\N:\frac{\ln\left(1+\frac{T}{B}\right)}{\sqrt{T}}<A\right\} \\
 & \geq\inf\left\{ \left\lfloor cB\right\rfloor +1\leq T\in\N:\frac{\ln\left(\frac{T}{B}\right)}{\sqrt{T}}<A\right\} \\
 & =\inf\left\{ \left\lfloor cB\right\rfloor +1\leq T\in\N:-\frac{A\sqrt{T}}{2}\exp\left(-\frac{A\sqrt{T}}{2}\right)>-\frac{A\sqrt{B}}{2}\right\} .
\end{align*}
Now, we redefine $h(x)\defeq-x\exp(-x)$ for $x\geq0$. Note that
$h'(x)=\exp(-x)(x-1)\Rightarrow\min_{x\geq0}h(x)=h(1)=-\frac{1}{e}$.
Next, we observe that
\[
h\left(\frac{A\sqrt{\left\lfloor cB\right\rfloor +1}}{2}\right)\leq-\frac{A\sqrt{B}}{2}\Leftrightarrow\sqrt{\left\lfloor cB\right\rfloor +1}\geq\sqrt{B}\exp\left(\frac{A\sqrt{\left\lfloor cB\right\rfloor +1}}{2}\right),
\]
which holds due to 
\[
\sqrt{\left\lfloor cB\right\rfloor +1}\geq\sqrt{cB}\overset{A\sqrt{B}\leq\frac{\ln c}{\sqrt{2c}}}{\geq}\sqrt{B}\exp\left(\frac{A\sqrt{2cB}}{2}\right)\overset{cB\geq\left\lfloor cB\right\rfloor \geq1}{\geq}\sqrt{B}\exp\left(\frac{A\sqrt{\left\lfloor cB\right\rfloor +1}}{2}\right).
\]
Hence,
\[
\inf\left\{ \left\lfloor cB\right\rfloor +1\leq T\in\N:-\frac{A\sqrt{T}}{2}\exp\left(-\frac{A\sqrt{T}}{2}\right)>-\frac{A\sqrt{B}}{2}\right\} \geq\left\lceil T_{\text{root}}\right\rceil ,
\]
where $T_{\text{root}}\in\R$ is the unique solution of $h\left(\frac{A\sqrt{T}}{2}\right)=-\frac{A\sqrt{B}}{2}$
that guarantees $\frac{A\sqrt{T_{\text{root}}}}{2}>1$. More precisely,
let $W_{-1}$ be the Lambert $W$ function, we have
\[
\frac{A\sqrt{T_{\text{root}}}}{2}=-W_{-1}\left(-\frac{A\sqrt{B}}{2}\right)\Leftrightarrow T_{\text{root}}=\frac{4}{A^{2}}\left[-W_{-1}\left(-\frac{A\sqrt{B}}{2}\right)\right]^{2}.
\]
Finally, we apply the standard inequality $-W_{-1}(-x)\geq\ln\frac{1}{x}$
to conclude.
\end{proof}

Lastly, we prove Lemma \ref{lem:AdaGrad-lb-1d-numeric-2}, which can
help us further lower bound the algorithm-dependent lower bound.
\begin{lem}
\label{lem:AdaGrad-lb-1d-numeric-2}Given $A>0$, we have $\inf_{\eta>0}\frac{1}{\eta}+\eta\ln^{2}(A\eta)\geq\ln A.$
\end{lem}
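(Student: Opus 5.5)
The claim is that for every $A>0$, $\frac{1}{\eta}+\eta\ln^{2}(A\eta)\geq\ln A$ holds for all $\eta>0$. If $A\leq1$ the right-hand side is nonpositive while the left-hand side is strictly positive, so the inequality is trivial. I will therefore assume $A>1$ and look for a pointwise bound on $\phi(\eta)\defeq\frac{1}{\eta}+\eta\ln^{2}(A\eta)$. I plan to split at $\eta=1$.

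\emph{Case $\eta\leq 1$.} Change variables to $s\defeq\ln(1/\eta)\geq 0$, so that $\ln(A\eta)=\ln A-s$. The target becomes
\[
e^{s}+e^{-s}(\ln A-s)^{2}\geq\ln A .
\]
Set $u\defeq\ln A-s$.
\begin{itemize}
\item If $u\leq 0$, then $s\geq\ln A$, hence $e^{s}\geq A\geq 1+\ln A>\ln A$.
\item If $u>0$, use AM-GM: $e^{s}+e^{-s}u^{2}\geq 2u=2(\ln A-s)$. Combined with the crude bound $e^{s}\geq 1+s$, this gives
\[
\phi\geq\max\{1+s,\;2\ln A-2s\}.
\]
The maximum of two such linear functions is smallest where they are equal, namely at $s=(2\ln A-1)/3$. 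The value there is $(2\ln A+2)/3$, which falls below $\ln A$ once $\ln A>2$.
\end{itemize}
So the max bound alone is too lossy, and I need something sharper.

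\textbf{Sharpening the $u>0$ subcase.} Write $e^{s}=e^{\ln A-u}=A e^{-u}$. Then
\[
\phi=Ae^{-u}+\frac{u^{2}e^{u}}{A}, \qquad u\in(0,\ln A].
\]
The goal is $\phi\geq\ln A$. Take $a\defeq\ln A$ and $t\defeq a-u\in[0,a)$, which puts the target in the form
\[
e^{t}+e^{-t}(a-t)^{2}\geq a .
\]
\begin{itemize}
\item If $t\geq\ln a$, then $e^{t}\geq a$ and we are done.
\item If $t<\ln a$, then $e^{-t}>1/a$, so the left-hand side is at least $e^{t}+(a-t)^{2}/a$. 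Using $e^{t}\geq 1+t$, it suffices to show
\[
1+t+\frac{(a-t)^{2}}{a}-a=1-t+\frac{t^{2}}{a}\geq 0 .
\]
This holds whenever $a\leq 4$ (the discriminant $1-4/a$ is then nonpositive), and it also holds for every $t\leq 1$.
\end{itemize}
For $a>4$ and $t\in(1,\ln a)$, I bound more carefully. Since $t<\ln a<a/2$, we have $(a-t)^{2}\geq a^{2}/4$. This gives $e^{-t}(a-t)^{2}\geq\frac{a^{2}}{4}e^{-t}>\frac{a^{2}}{4a}=\frac{a}{4}$, which is still not enough on its own. The cleaner route is to minimize $\psi(t)\defeq e^{t}+e^{-t}(a-t)^{2}$ directly. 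Its stationarity condition is $e^{2t}=(a-t)^{2}+2(a-t)$, so at a critical point $e^{t}\approx a-t+1$, and then
\[
\psi\approx 2(a-t)+1+\frac{1}{a-t+1}\geq a
\]
whenever $t\leq a/2$, which is the case here. Making this exact is the main obstacle. The plan is to substitute the identity $e^{t}=\sqrt{(a-t)^{2}+2(a-t)}$ into $\psi$ at the critical point and verify the resulting one-variable inequality in $w\defeq a-t$, using $t<\ln a$ to keep $w\geq a-\ln a$.

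\emph{Case $\eta\geq 1$.} Here $\ln(A\eta)\geq\ln A>0$, so
\[
\eta\ln^{2}(A\eta)\geq\ln^{2}A .
\]
If $\ln A\geq 1$ this is at least $\ln A$. If $\ln A<1$, then $\frac{1}{\eta}+\eta\ln^{2}(A\eta)\geq 2\ln(A\eta)\geq 2\ln A$ by AM-GM. Either way $\phi\geq\ln A$, which completes this case.
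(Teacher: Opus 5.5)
\textbf{There is a gap: the region containing the minimizer is not proved.} For $\eta\le 1$ you reduce, with $a=\ln A$ and $t=\ln(1/\eta)$, to showing $\psi(t)=e^{t}+e^{-t}(a-t)^{2}\geq a$. You settle the cases $t\geq\ln a$, and $t<\ln a$ with $a\leq4$ or $t\leq1$. For $a>4$ and $t\in(1,\ln a)$, i.e.\ $A>e^{4}$ and $\eta\in(1/\ln A,1/e)$, you only give an approximate critical-point computation (``$\psi\approx\dots$''). You then state a plan to make it exact through the transcendental equation $e^{2t}=(a-t)^{2}+2(a-t)$, but nothing is actually verified in that window.

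This is not a negligible corner. At any critical point with $t>1$ one has $e^{t}=\sqrt{(a-t)^{2}+2(a-t)}<a-t+1<a$, so any interior minimizer with $t>1$ falls in the unproved window. For large $A$ the minimizer of $\frac{1}{\eta}+\eta\ln^{2}(A\eta)$ is exactly there, so as written the proposal covers every region except the one where the inequality is hardest.

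\textbf{The repair is short.} In the branch $t<\ln a$ you correctly obtain $\psi(t)>e^{t}+\frac{(a-t)^{2}}{a}$, so what you need is $e^{t}-2t+\frac{t^{2}}{a}\geq0$. The loss comes from weakening $e^{t}$ to $1+t$. Use the tangent line at $t=1$ instead, $e^{t}\geq et$. This gives $e^{t}-2t+\frac{t^{2}}{a}\geq(e-2)t\geq0$ for all $t\geq0$ and every $a>0$, with no restriction on $a$ or $t$.

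In fact $e^{t}>2t$ for every real $t$, so one dichotomy handles all $\eta>0$ when $A>1$. Either $\frac{1}{\eta}\geq\ln A$, or $\eta>\frac{1}{\ln A}$, in which case $\frac{1}{\eta}+\eta\ln^{2}(A\eta)-\ln A\geq\frac{1}{\eta}+2\ln\eta+\frac{\ln^{2}\eta}{\ln A}>0$. This makes your separate $\eta\geq1$ case and $u\leq0$ subcase unnecessary.

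With this fix your route is a valid elementary alternative. The paper avoids case analysis altogether. It views $\frac{1}{\eta}+\eta\ln^{2}(A\eta)-\ln A$ as the quadratic $h_{\eta}(x)=\eta x^{2}-(1-2\eta\ln\eta)x+\eta\ln^{2}\eta+\frac{1}{\eta}$ evaluated at $x=\ln A$. Its discriminant is $-4\eta\ln\eta-3\leq\frac{4}{e}-3<0$, so $h_{\eta}$ is positive everywhere.
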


\begin{proof}
We fix $\eta>0$ and define $h_{\eta}(x)\defeq\eta x^{2}-\left(1-2\eta\ln\eta\right)x+\eta\ln^{2}\eta+\frac{1}{\eta}$.
Note that $\eta>0$ and the discriminant of $h_{\eta}$ is $-4\eta\ln\eta-3\leq4/e-3<0$,
since $\min_{\eta>0}\eta\ln\eta=-1/e$. Therefore, $h_{\eta}(x)\geq0$
for all $x\in\R$. In particular, we have
\[
\frac{1}{\eta}+\eta\ln^{2}(A\eta)-\ln A=h_{\eta}(\ln A)\geq0,
\]
which implies the desired result.
\end{proof}

\section{Another Upper Bound for $\protect\AdaGradNorm$\label{sec:AdaGradNorm-ub}}

In this section, we provide another upper bound for $\AdaGradNorm$,
given in Theorem \ref{thm:AdaGradNorm-ub}. Unlike Theorem \ref{thm:main-AdaGradNorm-ub-bounded},
this bound does not require the objective function to be bounded.
However, it is only in the order of $\tilde{\O}(1/T^{\frac{3\p-4}{4\p}})$
as a trade-off (same as Theorem \ref{thm:AdaGrad-ub} for $\AdaGrad$),
which becomes vacuous when $\p\in\left(1,\nicefrac{4}{3}\right]$.

\subsection{Theorem and Its Proof}
\begin{thm}
\label{thm:AdaGradNorm-ub}Under Assumptions \ref{assu:lb}, \ref{assu:smooth},
\ref{assu:unbias}, and \ref{assu:heavy}, let $\Delta\triangleq f(\bx_{1})-f_{\star}$,
then for any $\gamma>0$ and $\lambda>0$, $\AdaGradNorm$ (Algorithm
\ref{alg:AdaGradNorm}) guarantees
\begin{align*}
\E\left[\frac{1}{T}\sum_{t=1}^{T}\left\Vert \nabla f(\bx_{t})\right\Vert _{2}\right]\leq & \O\left(\frac{\lambda+\frac{\Delta}{\gamma}+\gamma\left\Vert \bL\right\Vert _{\infty}\ln\res_{T}}{\sqrt{T}}+\frac{\left\Vert \bsig\right\Vert _{\p}\ln^{\frac{1}{\cp}+\frac{1}{2}}\res_{T}}{T^{\frac{\p-1}{\p}}}\right.\\
 & \left.\quad+\frac{\sqrt{\left(\frac{\Delta}{\gamma}+\gamma\left\Vert \bL\right\Vert _{\infty}\ln\res_{T}\right)\left\Vert \bsig\right\Vert _{\p}}}{T^{\frac{\p-1}{2\p}}}+\frac{\left\Vert \bsig\right\Vert _{\p}\ln^{\frac{1}{2\cp}+\frac{1}{4}}\res_{T}}{T^{\frac{3\p-4}{4\p}}}\right),
\end{align*}
where $\res_{T}=1+\frac{\sqrt{2}\left\Vert \bsig\right\Vert _{\p}T^{\frac{1}{\p}}+2\left\Vert \nabla f(\bx_{1})\right\Vert _{2}T^{\frac{1}{2}}+2\gamma\left\Vert \bL\right\Vert _{\infty}T^{\frac{3}{2}}}{\lambda}$
is introduced in Lemma \ref{lem:AdaGradNorm-ln-bound}.
\end{thm}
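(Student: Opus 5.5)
The plan mirrors the proof of Theorem \ref{thm:AdaGrad-ub} with the coordinate-wise objects replaced by scalar ones. I use the scalar proxy stepsize
\[
w_{t}\defeq v_{t-1}+\left\Vert \nabla f(\bx_{t})\right\Vert _{2}^{2}+c^{2}\in\F_{t-1},
\]
with a free scalar $c\geq0$.

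\textbf{Step 1: one-step lemma.} I would first prove the analogue of Lemma \ref{lem:AdaGrad-core}:
\[
\frac{\gamma}{2}\E\left[\frac{\left\Vert \nabla f(\bx_{t})\right\Vert _{2}^{2}}{\lambda+\sqrt{w_{t}}}\right]\leq\E\left[f(\bx_{t})-f(\bx_{t+1})\right]+\frac{\gamma\left\Vert \bsig\right\Vert _{\p}^{2}}{c}\left(\E\left[\frac{\left\Vert \bg_{t}\right\Vert _{2}^{2}}{\lambda^{2}+v_{t}}\right]\right)^{\frac{2}{\cp}}+\gamma\left(c+\frac{\gamma\left\Vert \bL\right\Vert _{\infty}}{2}\right)\E\left[\frac{\left\Vert \bg_{t}\right\Vert _{2}^{2}}{\lambda^{2}+v_{t}}\right].
\]
The derivation is the same as before. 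Apply smoothness with $\left\Vert \cdot\right\Vert _{\bL}^{2}\leq\left\Vert \bL\right\Vert _{\infty}\left\Vert \cdot\right\Vert _{2}^{2}$. Split the stepsize difference as $\frac{1}{\lambda+\sqrt{w_{t}}}-\frac{1}{\lambda+\sqrt{v_{t}}}$ and write
\[
v_{t}-w_{t}=\left\langle \bg_{t}+\nabla f(\bx_{t}),\bxi_{t}\right\rangle -c^{2}.
\]
Then use $\left|\left\langle \bg_{t}+\nabla f(\bx_{t}),\bxi_{t}\right\rangle \right|\leq(\left\Vert \bg_{t}\right\Vert _{2}+\left\Vert \nabla f(\bx_{t})\right\Vert _{2})\left\Vert \bxi_{t}\right\Vert _{2}$, cancel against $\sqrt{w_{t}}+\sqrt{v_{t}}$, and apply AM-GM. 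The only new point is Hölder's inequality on $\E_{t-1}[\left\Vert \bxi_{t}\right\Vert _{2}\left\Vert \bg_{t}\right\Vert _{2}/(\lambda+\sqrt{v_{t}})]$: here I use
\[
\E_{t-1}\left[\left\Vert \bxi_{t}\right\Vert _{2}^{\p}\right]\leq\E_{t-1}\left[\left\Vert \bxi_{t}\right\Vert _{\p}^{\p}\right]\leq\left\Vert \bsig\right\Vert _{\p}^{\p},
\]
which holds since $\p\leq2$. This is why $\left\Vert \bsig\right\Vert _{\p}$ appears.

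\textbf{Step 2: the logarithmic bounds.} The scalar analogue of Lemma \ref{lem:AdaGrad-general-ln} is immediate from $1-x^{-1}\leq\ln x$. Combining it with Lemma \ref{lem:main-AdaGradNorm-v} in place of Lemma \ref{lem:AdaGrad-v} gives a Lemma \ref{lem:AdaGradNorm-ln-bound}:
\[
\E\left[\ln(1+v_{T}/\lambda^{2})\right]\leq D_{T}\defeq2\ln\left(1+\frac{\sqrt{2}\left\Vert \bsig\right\Vert _{\p}T^{\frac{1}{\p}}+\E[\sqrt{2u_{T}}]}{\lambda}\right)\leq2\ln\res_{T}.
\]
For the last inequality I bound $u_{T}$ almost surely by
\[
\left\Vert \nabla f(\bx_{t})\right\Vert _{2}\leq\left\Vert \nabla f(\bx_{1})\right\Vert _{2}+\gamma\left\Vert \bL\right\Vert _{\infty}(t-1).
\]
This follows from $\left\Vert \nabla f(\bx)-\nabla f(\by)\right\Vert _{2}\leq\left\Vert \bL\right\Vert _{\infty}\left\Vert \bx-\by\right\Vert _{2}$ and $\left\Vert \bx_{s+1}-\bx_{s}\right\Vert _{2}\leq\gamma$. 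It produces exactly the $\res_{T}$ in the statement.

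\textbf{Step 3: summing and choosing $c$.} Summing Step 1 over $t\leq T$ and using Step 2 with $q=2/\cp$ and $q=1$ bounds $\E[\sum_{t}\left\Vert \nabla f(\bx_{t})\right\Vert _{2}^{2}/(\lambda+\sqrt{w_{t}})]$ by
\[
\frac{2\Delta}{\gamma}+\O\left(\gamma\left\Vert \bL\right\Vert _{\infty}\ln\res_{T}\right)+\O\left(\frac{\left\Vert \bsig\right\Vert _{\p}^{2}}{c}T^{1-\frac{2}{\cp}}D_{T}^{\frac{2}{\cp}}+cD_{T}\right).
\]
I then choose $c=\left\Vert \bsig\right\Vert _{\p}(T/D_{T})^{\frac{1}{2}-\frac{1}{\cp}}$, which balances the last two terms to $\O(\left\Vert \bsig\right\Vert _{\p}T^{\frac{1}{\p}-\frac{1}{2}}\ln^{\frac{1}{\cp}+\frac{1}{2}}\res_{T})$.

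\textbf{Step 4: converting to a gradient bound.} As in (\ref{eq:AdaGrad-ub-2}), since each summand has denominator at most $\lambda+\sqrt{v_{T}+u_{T}+c^{2}}$,
\[
\E\left[\sqrt{u_{T}}\right]\leq\sqrt{\E\left[\sum_{t}\frac{\left\Vert \nabla f(\bx_{t})\right\Vert _{2}^{2}}{\lambda+\sqrt{w_{t}}}\right]\E\left[\lambda+\sqrt{v_{T}+u_{T}+c^{2}}\right]}.
\]
Bound the second factor by $\lambda+c+\sqrt{2}\left\Vert \bsig\right\Vert _{\p}T^{1/\p}+\sqrt{3}\E[\sqrt{u_{T}}]$. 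Handle $c$ by the same two cases $D_{T}\geq1$ and $D_{T}<1$ as in the proof of Theorem \ref{thm:AdaGrad-ub}; in the second case $\E[\sqrt{u_{T}}]=\O(\lambda)$ directly. Solving the resulting quadratic inequality in $\E[\sqrt{u_{T}}]$ and using
\[
\sum_{t}\left\Vert \nabla f(\bx_{t})\right\Vert _{2}\leq\sqrt{T}\sqrt{u_{T}}
\]
gives the four terms after dividing by $T$. In particular the $\sqrt{\left\Vert \bsig\right\Vert _{\p}T^{\frac{1}{\p}-\frac{1}{2}}\cdot\left\Vert \bsig\right\Vert _{\p}T^{\frac{1}{\p}}}/T$ cross term yields $T^{-\frac{3\p-4}{4\p}}$.

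The main obstacle I expect is Step 1: checking that the norm-based noise inequality and the $\frac{\cp}{2}\geq1$ Jensen step carry over with the scalar ratio $\left\Vert \bg_{t}\right\Vert _{2}^{2}/(\lambda^{2}+v_{t})\leq1$. Once that holds, the rest is a transcription of the $\AdaGrad$ proof.
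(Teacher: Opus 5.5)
Your proposal is correct and follows the same route as the paper. The paper's proof of this theorem consists of Lemmas \ref{lem:AdaGradNorm-core}, \ref{lem:AdaGradNorm-general-ln}, \ref{lem:AdaGradNorm-v}, and \ref{lem:AdaGradNorm-ln-bound}, which are your Steps 1--2, including the same use of $\|\bxi_t\|_{2}\le\|\bxi_t\|_{\p}$ and an equivalent almost-sure gradient-growth bound yielding $\res_T$. These are plugged into the $\AdaGrad$ argument with the same scalar choice $c=\|\bsig\|_{\p}(T/D_T)^{\frac{1}{2}-\frac{1}{\cp}}$ and the same two-case treatment of $D_T$.

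One cosmetic slip: since $\E[\sqrt{u_T}]$ is divided by $\sqrt{T}$, not $T$, the cross term should be written $\sqrt{\|\bsig\|_{\p}T^{\frac{1}{\p}-\frac{1}{2}}\cdot\|\bsig\|_{\p}T^{\frac{1}{\p}}/T}$. That is what actually gives the rate $T^{-\frac{3\p-4}{4\p}}$.
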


\begin{proof}
Equipped with Lemmas \ref{lem:AdaGradNorm-core} (choose $c\defeq\left\Vert \bsig\right\Vert _{\p}T^{\frac{1}{2}-\frac{1}{\cp}}/D_{T}^{\frac{1}{2}-\frac{1}{\cp}}$
for $D_{T}\defeq2\ln(1+\frac{\sqrt{2}\left\Vert \bsig\right\Vert _{\p}T^{\frac{1}{\p}}+\E\left[\sqrt{2u_{T}}\right]}{\lambda})$
when invoking it), \ref{lem:AdaGradNorm-general-ln}, \ref{lem:AdaGradNorm-v},
and \ref{lem:AdaGradNorm-ln-bound}, the proof of Theorem \ref{thm:AdaGradNorm-ub}
follows essentially the same way as proving Theorem \ref{thm:AdaGrad-ub},
which is omitted here to save space.
\end{proof}

\subsection{Helpful Lemmas}

This subsection provides all necessary lemmas to prove Theorem \ref{thm:AdaGradNorm-ub}.
The following four lemmas correspond to Lemmas \ref{lem:AdaGrad-core},
\ref{lem:AdaGrad-general-ln}, \ref{lem:AdaGrad-v}, and \ref{lem:AdaGrad-ln-bound}
under the $\ell_{2}$ geometry. Their proofs do not involve new techniques,
except that $w_{t}$ is now defined as follows
\[
w_{t}\defeq v_{t-1}+\left\Vert \nabla f(\bx_{t})\right\Vert _{2}^{2}+c^{2}\in\F_{t-1},\forall t\in\N,
\]
where $c\geq0$ can be an arbitrary constant and will be determined
in the proof of Theorem \ref{thm:AdaGradNorm-ub}.
\begin{lem}
\label{lem:AdaGradNorm-core}Under Assumptions \ref{assu:smooth},
\ref{assu:unbias}, and \ref{assu:heavy}, for any $c\geq0$ and $t\in\N$,
$\AdaGradNorm$ (Algorithm \ref{alg:AdaGradNorm}) guarantees
\[
\frac{\gamma}{2}\E\left[\frac{\left\Vert \nabla f(\bx_{t})\right\Vert _{2}^{2}}{\lambda+\sqrt{w_{t}}}\right]\leq\E\left[f(\bx_{t})-f(\bx_{t+1})\right]+\gamma\frac{\left\Vert \bsig\right\Vert _{\p}^{2}}{c}\left(\E\left[\frac{\left\Vert \bg_{t}\right\Vert _{2}^{2}}{\lambda^{2}+v_{t}}\right]\right)^{\frac{2}{\cp}}+\gamma\left(c+\frac{\gamma\left\Vert \bL\right\Vert _{\infty}}{2}\right)\E\left[\frac{\left\Vert \bg_{t}\right\Vert _{2}^{2}}{\lambda^{2}+v_{t}}\right].
\]
\end{lem}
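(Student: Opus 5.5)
The plan is to mirror the proof of Lemma \ref{lem:AdaGrad-core}, replacing coordinatewise quantities by scalar norm quantities. With $\gamma_t=\gamma/(\lambda+\sqrt{v_t})$, start from Assumption \ref{assu:smooth} and the $\AdaGradNorm$ update, bounding $\left\Vert \bg_{t}\right\Vert _{\bL}^{2}\leq\left\Vert \bL\right\Vert _{\infty}\left\Vert \bg_{t}\right\Vert _{2}^{2}$ and $(\lambda+\sqrt{v_t})^2\geq\lambda^2+v_t$. This gives
\[
f(\bx_{t+1})\leq f(\bx_{t})-\frac{\gamma\left\langle \nabla f(\bx_{t}),\bg_{t}\right\rangle }{\lambda+\sqrt{w_{t}}}+\gamma\left\langle \nabla f(\bx_{t}),\bg_{t}\right\rangle \left(\frac{1}{\lambda+\sqrt{w_{t}}}-\frac{1}{\lambda+\sqrt{v_{t}}}\right)+\frac{\gamma^{2}\left\Vert \bL\right\Vert _{\infty}}{2}\frac{\left\Vert \bg_{t}\right\Vert _{2}^{2}}{\lambda^{2}+v_{t}}.
\]
Since $w_t\in\F_{t-1}$, Assumption \ref{assu:unbias} turns the first inner product, under $\E_{t-1}$, into $\left\Vert \nabla f(\bx_{t})\right\Vert _{2}^{2}/(\lambda+\sqrt{w_t})$.

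For the correction term, write the stepsize difference as $(v_t-w_t)/[(\lambda+\sqrt{w_t})(\lambda+\sqrt{v_t})(\sqrt{w_t}+\sqrt{v_t})]$. Here
\[
v_t-w_t=\left\Vert \bg_t\right\Vert_2^2-\left\Vert\nabla f(\bx_t)\right\Vert_2^2-c^2=\left\langle \bg_t+\nabla f(\bx_t),\bxi_t\right\rangle-c^2 .
\]
By Cauchy--Schwarz, $|\langle\bg_t+\nabla f(\bx_t),\bxi_t\rangle|\leq(\left\Vert\bg_t\right\Vert_2+\left\Vert\nabla f(\bx_t)\right\Vert_2)\left\Vert\bxi_t\right\Vert_2$. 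Also $\left\Vert\bg_t\right\Vert_2+\left\Vert\nabla f(\bx_t)\right\Vert_2\leq\sqrt{v_t}+\sqrt{w_t}$ and $c\leq\sqrt{w_t}+\sqrt{v_t}$. Together with $|\langle\nabla f,\bg_t\rangle|\leq\left\Vert\nabla f\right\Vert_2\left\Vert\bg_t\right\Vert_2$, the correction is at most
\[
\frac{\left\Vert\nabla f(\bx_t)\right\Vert_2}{\lambda+\sqrt{w_t}}\left(\E_{t-1}\left[\frac{\left\Vert\bxi_t\right\Vert_2\left\Vert\bg_t\right\Vert_2}{\lambda+\sqrt{v_t}}\right]+\E_{t-1}\left[\frac{c\left\Vert\bg_t\right\Vert_2}{\lambda+\sqrt{v_t}}\right]\right).
\]
Apply AM-GM exactly as in (\ref{eq:AdaGrad-core-2}). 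Half of $\left\Vert\nabla f\right\Vert_2^2/(\lambda+\sqrt{w_t})$ is absorbed, and what remains are the squared expectations divided by $\lambda+\sqrt{w_t}\geq c$.

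The main obstacle is the noise term, because Assumption \ref{assu:heavy} is coordinatewise while the lemma uses $\left\Vert\bsig\right\Vert_\p$. Apply H\"older with exponents $(\p,\cp)$ to get $(\E_{t-1}\left\Vert\bxi_t\right\Vert_2^\p)^{2/\p}(\E_{t-1}[(\left\Vert\bg_t\right\Vert_2^2/(\lambda^2+v_t))^{\cp/2}])^{2/\cp}$. Then use $\left\Vert\cdot\right\Vert_2\leq\left\Vert\cdot\right\Vert_\p$ for $\p\leq2$, which gives $\E_{t-1}\left\Vert\bxi_t\right\Vert_2^\p\leq\sum_i\E_{t-1}|\bxi_{t,i}|^\p\leq\left\Vert\bsig\right\Vert_\p^\p$, so this factor is bounded by $\left\Vert\bsig\right\Vert_\p^2$.

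To finish, use $\left\Vert\bg_t\right\Vert_2^2/(\lambda^2+v_t)\leq1$ and $\cp/2\geq1$ to drop the power $\cp/2$ inside the expectation. The $c$-term is bounded by Jensen as in (\ref{eq:AdaGrad-core-4}). Take total expectations and apply Jensen once more, using concavity of $x^{2/\cp}$, to move the outer expectation inside. Rearranging then yields the stated inequality.
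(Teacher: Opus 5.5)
Your proposal is correct and follows the paper's proof essentially step for step. The sequence matches: the smoothness descent with $\left\Vert \bL\right\Vert _{\infty}$, the proxy-stepsize decomposition, the stepsize-difference bound using $\sqrt{v_t}+\sqrt{w_t}$, then AM-GM, H\"older with $\left\Vert\cdot\right\Vert_2\leq\left\Vert\cdot\right\Vert_\p$, and a final Jensen step. The only cosmetic difference is that you control $\left|\left\Vert\bg_t\right\Vert_2^2-\left\Vert\nabla f(\bx_t)\right\Vert_2^2\right|$ through the identity $\left\langle\bg_t+\nabla f(\bx_t),\bxi_t\right\rangle$ and Cauchy--Schwarz, which gives the same bound the paper uses.
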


\begin{proof}
We start with Assumption \ref{assu:smooth} and use the update rule
of $\AdaGradNorm$ to obtain
\begin{align*}
f(\bx_{t+1}) & \leq f(\bx_{t})+\left\langle \nabla f(\bx_{t}),\bx_{t+1}-\bx_{t}\right\rangle +\frac{\left\Vert \bx_{t+1}-\bx_{t}\right\Vert _{\bL}^{2}}{2}\\
 & =f(\bx_{t})-\gamma\left\langle \nabla f(\bx_{t}),\frac{\bg_{t}}{\lambda+\sqrt{v_{t}}}\right\rangle +\frac{\gamma^{2}\left\Vert \bg_{t}\right\Vert _{\bL}^{2}}{2(\lambda+\sqrt{v_{t}})^{2}}\\
 & \leq f(\bx_{t})-\gamma\left\langle \nabla f(\bx_{t}),\frac{\bg_{t}}{\lambda+\sqrt{v_{t}}}\right\rangle +\frac{\gamma^{2}\left\Vert \bL\right\Vert _{\infty}\left\Vert \bg_{t}\right\Vert _{2}^{2}}{2(\lambda^{2}+v_{t})}\\
 & =f(\bx_{t})-\gamma\left\langle \nabla f(\bx_{t}),\frac{\bg_{t}}{\lambda+\sqrt{w_{t}}}\right\rangle +\gamma\left\langle \nabla f(\bx_{t}),\frac{\bg_{t}}{\lambda+\sqrt{w_{t}}}-\frac{\bg_{t}}{\lambda+\sqrt{v_{t}}}\right\rangle +\frac{\gamma^{2}\left\Vert \bL\right\Vert _{\infty}\left\Vert \bg_{t}\right\Vert _{2}^{2}}{2(\lambda^{2}+v_{t})}.
\end{align*}
Take conditional expectations on both sides of the above inequality
and use 
\[
\E_{t-1}\left[\left\langle \nabla f(\bx_{t}),\frac{\bg_{t}}{\lambda+\sqrt{w_{t}}}\right\rangle \right]\overset{\bx_{t},w_{t}\in\F_{t-1}}{=}\left\langle \nabla f(\bx_{t}),\frac{\E_{t-1}\left[\bg_{t}\right]}{\lambda+\sqrt{w_{t}}}\right\rangle \overset{\text{Assumption }\ref{assu:unbias}}{=}\frac{\left\Vert \nabla f(\bx_{t})\right\Vert _{2}^{2}}{\lambda+\sqrt{w_{t}}}
\]
to obtain
\begin{align}
\E_{t-1}\left[f(\bx_{t+1})\right]\leq & f(\bx_{t})-\gamma\frac{\left\Vert \nabla f(\bx_{t})\right\Vert _{2}^{2}}{\lambda+\sqrt{w_{t}}}+\frac{\gamma^{2}\left\Vert \bL\right\Vert _{\infty}}{2}\E_{t-1}\left[\frac{\left\Vert \bg_{t}\right\Vert _{2}^{2}}{\lambda^{2}+v_{t}}\right]\nonumber \\
 & +\gamma\E_{t-1}\left[\left\langle \nabla f(\bx_{t}),\frac{\bg_{t}}{\lambda+\sqrt{w_{t}}}-\frac{\bg_{t}}{\lambda+\sqrt{v_{t}}}\right\rangle \right].\label{eq:AdaGradNorm-core-1}
\end{align}

Now, we can bound
\begin{align}
 & \E_{t-1}\left[\left\langle \nabla f(\bx_{t}),\frac{\bg_{t}}{\lambda+\sqrt{w_{t}}}-\frac{\bg_{t}}{\lambda+\sqrt{v_{t}}}\right\rangle \right]\nonumber \\
\leq & \E_{t-1}\left[\left\Vert \nabla f(\bx_{t})\right\Vert _{2}\left\Vert \bg_{t}\right\Vert _{2}\left|\frac{1}{\lambda+\sqrt{w_{t}}}-\frac{1}{\lambda+\sqrt{v_{t}}}\right|\right]\nonumber \\
\overset{(a)}{=} & \frac{\left\Vert \nabla f(\bx_{t})\right\Vert _{2}}{\lambda+\sqrt{w_{t}}}\E_{t-1}\left[\frac{\left\Vert \bg_{t}\right\Vert _{2}\left|\left\Vert \bg_{t}\right\Vert _{2}^{2}-\left\Vert \nabla f(\bx_{t})\right\Vert _{2}^{2}-c^{2}\right|}{(\lambda+\sqrt{v_{t}})(\sqrt{w_{t}}+\sqrt{v_{t}})}\right]\nonumber \\
\leq & \frac{\left\Vert \nabla f(\bx_{t})\right\Vert _{2}}{\lambda+\sqrt{w_{t}}}\left(\E_{t-1}\left[\frac{\left(\left\Vert \bg_{t}\right\Vert _{2}+\left\Vert \nabla f(\bx_{t})\right\Vert _{2}\right)\left\Vert \bxi_{t}\right\Vert _{2}\left\Vert \bg_{t}\right\Vert _{2}}{(\lambda+\sqrt{v_{t}})(\sqrt{w_{t}}+\sqrt{v_{t}})}\right]+\E_{t-1}\left[\frac{c^{2}\left\Vert \bg_{t}\right\Vert _{2}}{(\lambda+\sqrt{v_{t}})(\sqrt{w_{t}}+\sqrt{v_{t}})}\right]\right)\nonumber \\
\overset{(b)}{\leq} & \frac{\left\Vert \nabla f(\bx_{t})\right\Vert _{2}}{\lambda+\sqrt{w_{t}}}\left(\E_{t-1}\left[\frac{\left\Vert \bxi_{t}\right\Vert _{2}\left\Vert \bg_{t}\right\Vert _{2}}{\lambda+\sqrt{v_{t}}}\right]+\E_{t-1}\left[\frac{c\left\Vert \bg_{t}\right\Vert _{2}}{\lambda+\sqrt{v_{t}}}\right]\right)\nonumber \\
\overset{(c)}{\leq} & \frac{\left\Vert \nabla f(\bx_{t})\right\Vert _{2}^{2}}{2(\lambda+\sqrt{w_{t}})}+\frac{1}{\lambda+\sqrt{w_{t}}}\left(\left(\E_{t-1}\left[\frac{\left\Vert \bxi_{t}\right\Vert _{2}\left\Vert \bg_{t}\right\Vert _{2}}{\lambda+\sqrt{v_{t}}}\right]\right)^{2}+\left(\E_{t-1}\left[\frac{c\left\Vert \bg_{t}\right\Vert _{2}}{\lambda+\sqrt{v_{t}}}\right]\right)^{2}\right),\label{eq:AdaGradNorm-core-2}
\end{align}
where $(a)$ is by $\frac{\left\Vert \nabla f(\bx_{t})\right\Vert _{2}}{\lambda+\sqrt{w_{t}}}\in\F_{t-1}$,
$(b)$ is due to $\left\Vert \bg_{t}\right\Vert _{2}+\left\Vert \nabla f(\bx_{t})\right\Vert _{2}\leq\sqrt{w_{t}}+\sqrt{v_{t}}$
and $c\leq\sqrt{w_{t}}+\sqrt{v_{t}}$, and $(c)$ holds by AM-GM inequality,
i.e., $\left\Vert \nabla f(\bx_{t})\right\Vert _{2}X\leq\frac{\left\Vert \nabla f(\bx_{t})\right\Vert _{2}^{2}}{4}+X^{2}$
for $X=\E_{t-1}\left[\frac{\left\Vert \bxi_{t}\right\Vert _{2}\left\Vert \bg_{t}\right\Vert _{2}}{\lambda+\sqrt{v_{t}}}\right]$
and $\E_{t-1}\left[\frac{c\left\Vert \bg_{t}\right\Vert _{2}}{\lambda+\sqrt{v_{t}}}\right]$,
respectively. Next, we apply H\"{o}lder's inequality to get
\begin{align}
 & \left(\E_{t-1}\left[\frac{\left\Vert \bxi_{t}\right\Vert _{2}\left\Vert \bg_{t}\right\Vert _{2}}{\lambda+\sqrt{v_{t}}}\right]\right)^{2}\leq\left(\E_{t-1}\left[\left\Vert \bxi_{t}\right\Vert _{2}^{\p}\right]\right)^{\frac{2}{\p}}\left(\E_{t-1}\left[\frac{\left\Vert \bg_{t}\right\Vert _{2}^{\cp}}{(\lambda+\sqrt{v_{t}})^{\cp}}\right]\right)^{\frac{2}{\cp}}\nonumber \\
\overset{\left\Vert \cdot\right\Vert _{2}\leq\left\Vert \cdot\right\Vert _{\p},\text{Assumption }\ref{assu:heavy}}{\leq} & \left\Vert \bsig\right\Vert _{\p}^{2}\left(\E_{t-1}\left[\frac{\left\Vert \bg_{t}\right\Vert _{2}^{\cp}}{(\lambda+\sqrt{v_{t}})^{\cp}}\right]\right)^{\frac{2}{\cp}}\leq\left\Vert \bsig\right\Vert _{\p}^{2}\left(\E_{t-1}\left[\left(\frac{\left\Vert \bg_{t}\right\Vert _{2}^{2}}{\lambda^{2}+v_{t}}\right)^{\frac{\cp}{2}}\right]\right)^{\frac{2}{\cp}},\label{eq:AdaGradNorm-core-3}
\end{align}
and
\begin{equation}
\left(\E_{t-1}\left[\frac{c\left\Vert \bg_{t}\right\Vert _{2}}{\lambda+\sqrt{v_{t}}}\right]\right)^{2}\leq\E_{t-1}\left[\frac{c^{2}\left\Vert \bg_{t}\right\Vert _{2}^{2}}{(\lambda+\sqrt{v_{t}})^{2}}\right]\leq\E_{t-1}\left[\frac{c^{2}\left\Vert \bg_{t}\right\Vert _{2}^{2}}{\lambda^{2}+v_{t}}\right].\label{eq:AdaGradNorm-core-4}
\end{equation}
Plug (\ref{eq:AdaGradNorm-core-3}) and (\ref{eq:AdaGradNorm-core-4})
into (\ref{eq:AdaGradNorm-core-2}) to obtain
\begin{align}
 & \E_{t-1}\left[\left\langle \nabla f(\bx_{t}),\frac{\bg_{t}}{\lambda+\sqrt{w_{t}}}-\frac{\bg_{t}}{\lambda+\sqrt{v_{t}}}\right\rangle \right]\nonumber \\
\leq & \frac{\left\Vert \nabla f(\bx_{t})\right\Vert _{2}^{2}}{2(\lambda+\sqrt{w_{t}})}+\frac{\left\Vert \bsig\right\Vert _{\p}^{2}}{\lambda+\sqrt{w_{t}}}\left(\E_{t-1}\left[\left(\frac{\left\Vert \bg_{t}\right\Vert _{2}^{2}}{\lambda^{2}+v_{t}}\right)^{\frac{\cp}{2}}\right]\right)^{\frac{2}{\cp}}+\frac{c^{2}}{\lambda+\sqrt{w_{t}}}\E_{t-1}\left[\frac{\left\Vert \bg_{t}\right\Vert _{2}^{2}}{\lambda^{2}+v_{t}}\right]\nonumber \\
\leq & \frac{\left\Vert \nabla f(\bx_{t})\right\Vert _{2}^{2}}{2(\lambda+\sqrt{w_{t}})}+\frac{\left\Vert \bsig\right\Vert _{\p}^{2}}{c}\left(\E_{t-1}\left[\frac{\left\Vert \bg_{t}\right\Vert _{2}^{2}}{\lambda^{2}+v_{t}}\right]\right)^{\frac{2}{\cp}}+c\E_{t-1}\left[\frac{\left\Vert \bg_{t}\right\Vert _{2}^{2}}{\lambda^{2}+v_{t}}\right],\label{eq:AdaGradNorm-core-5}
\end{align}
where the last step is by $\lambda+\sqrt{w_{t}}\geq c$, $\frac{\left\Vert \bg_{t}\right\Vert _{2}^{2}}{\lambda^{2}+v_{t}}\leq1$,
and $\frac{\cp}{2}\geq1$.

We combine (\ref{eq:AdaGradNorm-core-1}) and (\ref{eq:AdaGradNorm-core-5})
to have
\[
\E_{t-1}\left[f(\bx_{t+1})\right]\leq f(\bx_{t})-\frac{\gamma\left\Vert \nabla f(\bx_{t})\right\Vert _{2}^{2}}{2(\lambda+\sqrt{w_{t}})}+\gamma\frac{\left\Vert \bsig\right\Vert _{\p}^{2}}{c}\left(\E_{t-1}\left[\frac{\left\Vert \bg_{t}\right\Vert _{2}^{2}}{\lambda^{2}+v_{t}}\right]\right)^{\frac{2}{\cp}}+\gamma\left(c+\frac{\gamma\left\Vert \bL\right\Vert _{\infty}}{2}\right)\E_{t-1}\left[\frac{\left\Vert \bg_{t}\right\Vert _{2}^{2}}{\lambda^{2}+v_{t}}\right].
\]
Taking expectations on both sides and rearranging terms, we know
\[
\frac{\gamma}{2}\E\left[\frac{\left\Vert \nabla f(\bx_{t})\right\Vert _{2}^{2}}{\lambda+\sqrt{w_{t}}}\right]\leq\E\left[f(\bx_{t})-f(\bx_{t+1})\right]+\gamma\frac{\left\Vert \bsig\right\Vert _{\p}^{2}}{c}\E\left[\left(\E_{t-1}\left[\frac{\left\Vert \bg_{t}\right\Vert _{2}^{2}}{\lambda^{2}+v_{t}}\right]\right)^{\frac{2}{\cp}}\right]+\gamma\left(c+\frac{\gamma\left\Vert \bL\right\Vert _{\infty}}{2}\right)\E\left[\frac{\left\Vert \bg_{t}\right\Vert _{2}^{2}}{\lambda^{2}+v_{t}}\right].
\]
Finally, noticing that $\frac{\cp}{2}\geq1$, we hence can invoke
H\"{o}lder's inequality again to have
\[
\E\left[\left(\E_{t-1}\left[\frac{\left\Vert \bg_{t}\right\Vert _{2}^{2}}{\lambda^{2}+v_{t}}\right]\right)^{\frac{2}{\cp}}\right]\leq\left(\E\left[\E_{t-1}\left[\frac{\left\Vert \bg_{t}\right\Vert _{2}^{2}}{\lambda^{2}+v_{t}}\right]\right]\right)^{\frac{2}{\cp}}=\left(\E\left[\frac{\left\Vert \bg_{t}\right\Vert _{2}^{2}}{\lambda^{2}+v_{t}}\right]\right)^{\frac{2}{\cp}},
\]
which leads us to the desired result.
\end{proof}

\begin{lem}
\label{lem:AdaGradNorm-general-ln}For any $T\in\N$ and $q\in\left[0,1\right]$,
$\AdaGradNorm$ (Algorithm \ref{alg:AdaGradNorm}) guarantees
\[
\sum_{t=1}^{T}\left(\E\left[\frac{\left\Vert \bg_{t}\right\Vert _{2}^{2}}{\lambda^{2}+v_{t}}\right]\right)^{q}\leq T^{1-q}\left(\E\left[\ln\left(1+\frac{v_{T}}{\lambda^{2}}\right)\right]\right)^{q}.
\]
\end{lem}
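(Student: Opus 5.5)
The plan is to mirror the proof of Lemma \ref{lem:AdaGrad-general-ln}, replacing the coordinate-wise quantities $\bg_{t,i}^{2}$ and $\bv_{t,i}$ with the scalar quantities $\left\Vert \bg_{t}\right\Vert _{2}^{2}$ and $v_{t}$ of $\AdaGradNorm$. The two ingredients are the same: Jensen's inequality for the concave map $x\mapsto x^{q}$ with $q\in\left[0,1\right]$, and the elementary bound $1-x^{-1}\leq\ln x$ for $x>0$, which telescopes.

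First, I will apply concavity of $x^{q}$ to the average over $t\in\left[T\right]$ of the nonnegative numbers $a_{t}\defeq\E\left[\frac{\left\Vert \bg_{t}\right\Vert _{2}^{2}}{\lambda^{2}+v_{t}}\right]$. This gives $\frac{1}{T}\sum_{t=1}^{T}a_{t}^{q}\leq\left(\frac{1}{T}\sum_{t=1}^{T}a_{t}\right)^{q}$, hence
\[
\sum_{t=1}^{T}a_{t}^{q}\leq T^{1-q}\left(\E\left[\sum_{t=1}^{T}\frac{\left\Vert \bg_{t}\right\Vert _{2}^{2}}{\lambda^{2}+v_{t}}\right]\right)^{q}.
\]
The edge case $q=0$ is trivial, since both sides equal $T$ under the convention $0^{0}=1$.

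Second, I will use the update $v_{t}=v_{t-1}+\left\Vert \bg_{t}\right\Vert _{2}^{2}$ to rewrite each summand as $1-\frac{\lambda^{2}+v_{t-1}}{\lambda^{2}+v_{t}}$. Applying $1-x^{-1}\leq\ln x$ with $x=\frac{\lambda^{2}+v_{t}}{\lambda^{2}+v_{t-1}}>0$, which is well defined since $\lambda>0$, bounds it by $\ln\frac{\lambda^{2}+v_{t}}{\lambda^{2}+v_{t-1}}$. Summing from $t=1$ to $T$ telescopes to $\ln\left(1+\frac{v_{T}}{\lambda^{2}}\right)$ because $v_{0}=0$. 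Taking expectations and using monotonicity of $x^{q}$ then finishes the argument.

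There is no real obstacle. The only points to check are that $\lambda>0$, so that all denominators are positive, and that $x^{q}$ is nondecreasing on $\R_{\geq0}$, so the telescoped bound can be substituted inside the $q$-th power.
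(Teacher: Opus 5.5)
Your proposal is correct and follows the paper's proof essentially step for step. It uses the same ingredients: Jensen's inequality for the concave map $x\mapsto x^{q}$, the rewriting $\frac{\Vert \bg_{t}\Vert_{2}^{2}}{\lambda^{2}+v_{t}}=1-\frac{\lambda^{2}+v_{t-1}}{\lambda^{2}+v_{t}}$, the bound $1-x^{-1}\leq\ln x$, and telescoping with $v_{0}=0$. Your remarks on the case $q=0$, on $\lambda>0$, and on the monotonicity of $x^{q}$ just make explicit what the paper leaves implicit.
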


\begin{proof}
By the concavity of $x^{q}$ (since $q\in\left[0,1\right]$), we have
\[
\frac{1}{T}\sum_{t=1}^{T}\left(\E\left[\frac{\left\Vert \bg_{t}\right\Vert _{2}^{2}}{\lambda^{2}+v_{t}}\right]\right)^{q}\leq\left(\frac{1}{T}\sum_{t=1}^{T}\E\left[\frac{\left\Vert \bg_{t}\right\Vert _{2}^{2}}{\lambda^{2}+v_{t}}\right]\right)^{q},
\]
which implies that
\begin{align*}
\sum_{t=1}^{T}\left(\E\left[\frac{\left\Vert \bg_{t}\right\Vert _{2}^{2}}{\lambda^{2}+v_{t}}\right]\right)^{q} & \leq T^{1-q}\left(\E\left[\sum_{t=1}^{T}\frac{\left\Vert \bg_{t}\right\Vert _{2}^{2}}{\lambda^{2}+v_{t}}\right]\right)^{q}=T^{1-q}\left(\E\left[\sum_{t=1}^{T}1-\frac{\lambda^{2}+v_{t-1}}{\lambda^{2}+v_{t}}\right]\right)^{q}\\
 & \overset{(a)}{\leq}T^{1-q}\left(\E\left[\sum_{t=1}^{T}\ln\left(\frac{\lambda^{2}+v_{t}}{\lambda^{2}+v_{t-1}}\right)\right]\right)^{q}=T^{1-q}\left(\E\left[\ln\left(1+\frac{v_{T}}{\lambda^{2}}\right)\right]\right)^{q},
\end{align*}
where $(a)$ is due to $1-x^{-1}\leq\ln x,\forall x>0$.
\end{proof}

\begin{lem}[Restatement of Lemma \ref{lem:main-AdaGradNorm-v}]
\label{lem:AdaGradNorm-v}Under Assumption \ref{assu:heavy}, for
any $t\in\N$, $\AdaGradNorm$ (Algorithm \ref{alg:AdaGradNorm})
guarantees
\[
\E\left[\sqrt{v_{t}}\right]\leq\sqrt{2}\left\Vert \bsig\right\Vert _{\p}t^{\frac{1}{\p}}+\E\left[\sqrt{2u_{t}}\right],
\]
where $u_{t}\triangleq\sum_{s=1}^{t}\left\Vert \nabla f(\bx_{s})\right\Vert _{2}^{2},\forall t\in$$\N$.
\end{lem}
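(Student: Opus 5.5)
This is a restatement of Lemma \ref{lem:main-AdaGradNorm-v}, and I would reproduce that argument directly. The plan is to bound $\sqrt{v_t}$ pathwise by a noise part and a gradient part, and then take expectations.

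The first step is to decompose the sum. Write $\bg_s=\bxi_s+\nabla f(\bx_s)$ and use $\|\bm{a}+\bm{b}\|_2^2\le 2\|\bm{a}\|_2^2+2\|\bm{b}\|_2^2$. Together with $\sqrt{a+b}\le\sqrt a+\sqrt b$, this gives
\[
\sqrt{v_t}\le\sqrt{2\sum_{s=1}^t\|\bxi_s\|_2^2}+\sqrt{2u_t}.
\]

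The second step handles the noise term. Since $\p\le 2$, I apply the monotonicity of $\ell_r$ norms, $\|\cdot\|_2\le\|\cdot\|_{\p}$, twice. First I use it inside each time step, $\|\bxi_s\|_2\le\|\bxi_s\|_{\p}$. Then I use it on the length-$t$ vector $(\|\bxi_s\|_{\p})_{s\le t}$. This yields
\[
\sqrt{\textstyle\sum_s\|\bxi_s\|_2^2}\le\Bigl(\sum_{s=1}^t\sum_{i=1}^d|\bxi_{s,i}|^{\p}\Bigr)^{1/\p}.
\]

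The third step takes expectations. The map $x\mapsto x^{1/\p}$ is concave, so Jensen's (equivalently H\"older's) inequality moves the expectation inside the power. Then Assumption \ref{assu:heavy}, applied through the tower property to each $\E[|\bxi_{s,i}|^{\p}]=\E[\E_{s-1}[|\bxi_{s,i}|^{\p}]]\le\bsig_i^{\p}$, bounds the inner sum by $t\|\bsig\|_{\p}^{\p}$. This produces $\sqrt2\|\bsig\|_{\p}t^{1/\p}$, and adding $\E[\sqrt{2u_t}]$ finishes the proof.

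No step here is a real obstacle. The only point needing care is the double norm-comparison that collapses the space–time sum into a single $\ell_{\p}$ norm before taking expectations; it is what allows a bound using only the $\p$-th moment, without any second-moment assumption.
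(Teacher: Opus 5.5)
Your proposal is correct and follows the paper's proof essentially step for step. The steps match: the split $\bg_s=\bxi_s+\nabla f(\bx_s)$ with $\sqrt{a+b}\le\sqrt a+\sqrt b$, the double use of $\left\Vert\cdot\right\Vert_2\le\left\Vert\cdot\right\Vert_{\p}$ to collapse the sum over time and coordinates into one $\ell_{\p}$ norm, and then H\"older/Jensen together with Assumption \ref{assu:heavy} via the tower property to get $\sqrt2\left\Vert\bsig\right\Vert_{\p}t^{\frac{1}{\p}}$.
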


\begin{lem}
\label{lem:AdaGradNorm-ln-bound}Under Assumptions \ref{assu:smooth}
and \ref{assu:heavy}, $\AdaGradNorm$ (Algorithm \ref{alg:AdaGradNorm})
guarantees
\[
\E\left[\ln\left(1+\frac{v_{T}}{\lambda^{2}}\right)\right]\leq2\ln\left(1+\frac{\sqrt{2}\left\Vert \bsig\right\Vert _{\p}T^{\frac{1}{\p}}+\E\left[\sqrt{2u_{T}}\right]}{\lambda}\right)\leq2\ln\res_{T},
\]
where $\res_{T}\defeq1+\frac{\sqrt{2}\left\Vert \bsig\right\Vert _{\p}T^{\frac{1}{\p}}+2\left\Vert \nabla f(\bx_{1})\right\Vert _{2}T^{\frac{1}{2}}+2\gamma\left\Vert \bL\right\Vert _{\infty}T^{\frac{3}{2}}}{\lambda}.$
\end{lem}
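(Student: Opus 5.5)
This is the $\ell_2$ analogue of Lemma \ref{lem:AdaGrad-ln-bound}, and I would follow its two-step structure exactly. The first inequality rests on concavity and Lemma \ref{lem:AdaGradNorm-v}. The second rests on an almost-sure bound on the true gradients along the trajectory, obtained from smoothness and the fact that each $\AdaGradNorm$ step has bounded length.

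\emph{First inequality.} Write $\ln(1+v_T/\lambda^2)=2\ln\sqrt{1+v_T/\lambda^2}\le 2\ln(1+\sqrt{v_T}/\lambda)$. Jensen's inequality for the concave $\ln$ then moves the expectation inside, giving $2\ln(1+\E[\sqrt{v_T}]/\lambda)$. Finally, Lemma \ref{lem:AdaGradNorm-v} gives $\E[\sqrt{v_T}]\le\sqrt2\|\bsig\|_\p T^{1/\p}+\E[\sqrt{2u_T}]$, and monotonicity of $\ln$ yields the first claimed bound.

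\emph{Second inequality.} Here I need an almost-sure bound on $\|\nabla f(\bx_t)\|_2$.
\begin{itemize}
\item \textbf{Comparing norms.} By Assumption \ref{assu:smooth}, $\|\nabla f(\bx)-\nabla f(\by)\|_{1/\bL}\le\|\bx-\by\|_{\bL}$. For the left side, $\|\bz\|_2\le\sqrt{\|\bL\|_\infty}\,\|\bz\|_{1/\bL}$. For the right side, $\|\bz\|_{\bL}\le\sqrt{\|\bL\|_\infty}\,\|\bz\|_2$. Combining, $\|\nabla f(\bx_t)-\nabla f(\bx_1)\|_2\le\|\bL\|_\infty\|\bx_t-\bx_1\|_2$.
\item \textbf{Step length.} Each step satisfies $\|\bx_{s+1}-\bx_s\|_2=\gamma\|\bg_s\|_2/(\lambda+\sqrt{v_s})\le\gamma$, because $v_s\ge\|\bg_s\|_2^2$. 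By the triangle inequality, $\|\nabla f(\bx_t)\|_2\le\|\nabla f(\bx_1)\|_2+\gamma\|\bL\|_\infty(t-1)$ almost surely.
\item \textbf{Summing.} Using $(a+b)^2\le2a^2+2b^2$ and $\sum_{t=1}^T(t-1)^2\le T^3$, I get $\sqrt{2u_T}\le2\|\nabla f(\bx_1)\|_2T^{1/2}+2\gamma\|\bL\|_\infty T^{3/2}$ almost surely.
\end{itemize}
Substituting this into the first bound gives exactly $2\ln\res_T$.

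The only step needing care is the norm comparison in the first item. It must produce the factor $\|\bL\|_\infty$ rather than the $\sqrt{\|\bL\|_1\|\bL\|_\infty}$ that appears in the coordinate-wise version. This works because the $\AdaGradNorm$ step is controlled in $\ell_2$ directly, so no coordinate-wise sum over $\bL_j$ arises. Everything else is routine.
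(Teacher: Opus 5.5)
Your proposal is correct and matches the paper's proof step for step. The first inequality uses $\sqrt{1+a}\le 1+\sqrt{a}$, Jensen for the concave $\ln$, and Lemma~\ref{lem:AdaGradNorm-v}. The second uses an almost-sure linear-growth bound on $\|\nabla f(\bx_t)\|_2$, summed with $(a+b)^2\le 2a^2+2b^2$. The one cosmetic difference is the order of the norm conversions: the paper bounds each step in the $\bL$-norm by $\gamma\sqrt{\|\bL\|_\infty}$ and then passes from $\|\cdot\|_{1/\bL}$ to $\ell_2$, while you convert both sides to $\ell_2$ first. Your version makes explicit the norm comparison the paper leaves implicit, and it gives the same factor $\gamma\|\bL\|_\infty$.
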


\begin{proof}
Note that
\[
\E\left[\ln\left(1+\frac{v_{T}}{\lambda^{2}}\right)\right]=2\E\left[\ln\left(\sqrt{1+\frac{v_{T}}{\lambda^{2}}}\right)\right]\leq2\E\left[\ln\left(1+\frac{\sqrt{v_{T}}}{\lambda}\right)\right]\leq2\ln\left(1+\frac{\E\left[\sqrt{v_{T}}\right]}{\lambda}\right),
\]
where the last step is due to the concavity of $\ln x$. Next, we
invoke Lemma \ref{lem:AdaGradNorm-v} to obtain
\begin{equation}
\E\left[\ln\left(1+\frac{v_{T}}{\lambda^{2}}\right)\right]\leq2\ln\left(1+\frac{\sqrt{2}\left\Vert \bsig\right\Vert _{\p}T^{\frac{1}{\p}}+\E\left[\sqrt{2u_{T}}\right]}{\lambda}\right).\label{eq:AdaGradNorm-ln-bound-1}
\end{equation}

Moreover, under Assumption \ref{assu:smooth}, we have almost surely,
for any $t\in\left[T\right]$,
\begin{align*}
\left\Vert \nabla f(\bx_{t})-\nabla f(\bx_{1})\right\Vert _{\nicefrac{1}{\bL}} & \leq\left\Vert \bx_{t}-\bx_{1}\right\Vert _{\bL}\leq\sum_{s=1}^{t-1}\left\Vert \bx_{s+1}-\bx_{s}\right\Vert _{\bL}\\
 & =\sum_{s=1}^{t-1}\frac{\gamma\left\Vert \bg_{s}\right\Vert _{\bL}}{\lambda+\sqrt{v_{s}}}\leq\sum_{s=1}^{t-1}\gamma\sqrt{\left\Vert \bL\right\Vert _{\infty}}=\gamma\sqrt{\left\Vert \bL\right\Vert _{\infty}}(t-1)\\
\Rightarrow\left\Vert \nabla f(\bx_{t})-\nabla f(\bx_{1})\right\Vert  & \leq\gamma\left\Vert \bL\right\Vert _{\infty}(t-1).
\end{align*}
Hence, there is almost surely
\begin{align}
\sqrt{2u_{T}} & =\sqrt{2\sum_{t=1}^{T}\left\Vert \nabla f(\bx_{t})\right\Vert _{2}^{2}}\leq\sqrt{2\sum_{t=1}^{T}\left(\left\Vert \nabla f(\bx_{1})\right\Vert _{2}+\gamma\left\Vert \bL\right\Vert _{\infty}(t-1)\right)^{2}}\nonumber \\
 & \leq2\sqrt{\sum_{t=1}^{T}\left\Vert \nabla f(\bx_{1})\right\Vert _{2}^{2}+\sum_{t=1}^{T}\gamma^{2}\left\Vert \bL\right\Vert _{\infty}^{2}(t-1)^{2}}\nonumber \\
 & \leq2\left\Vert \nabla f(\bx_{1})\right\Vert _{2}T^{\frac{1}{2}}+2\gamma\left\Vert \bL\right\Vert _{\infty}T^{\frac{3}{2}}.\label{eq:AdaGradNorm-ln-bound-2}
\end{align}

Finally, we plug (\ref{eq:AdaGradNorm-ln-bound-2}) back into (\ref{eq:AdaGradNorm-ln-bound-1})
to have
\[
2\ln\left(1+\frac{\sqrt{2}\left\Vert \bsig\right\Vert _{\p}T^{\frac{1}{\p}}+\E\left[\sqrt{2u_{T}}\right]}{\lambda}\right)\leq2\ln\left(1+\frac{\sqrt{2}\left\Vert \bsig\right\Vert _{\p}T^{\frac{1}{\p}}+2\left\Vert \nabla f(\bx_{1})\right\Vert _{2}T^{\frac{1}{2}}+2\gamma\left\Vert \bL\right\Vert _{\infty}T^{\frac{3}{2}}}{\lambda}\right)=2\ln\res_{T}.
\]
\end{proof}

\end{document}